\ifpdf\usepackage[colorlinks,pagebackref]{hyperref} 
\else\usepackage[hypertex,pagebackref]{hyperref}\fi 
\newcommand{\comment}[1]{\marginpar{\sffamily{\raggedright\noindent\tiny #1
   \par}\normalfont}}
\newcommand{\Omit}[1]{\begin{tiny}#1\end{tiny}}
\renewcommand{\Omit}[1]{}
\renewcommand{\comment}[1]{}
\newbox\mybox
\def\overtag#1#2#3{\setbox\mybox\hbox{$#1$}\hbox to
  0pt{\vbox to 0pt{\vglue-#3\vglue-\ht\mybox\hbox to \wd\mybox
      {\hss$\ss#2$\hss}\vss}\hss}\box\mybox}
\def\undertag#1#2#3{\setbox\mybox\hbox{$#1$}\hbox to 0pt{\vbox to
    0pt{\vglue#3\vglue\ht\mybox\hbox to \wd\mybox
      {\hss$\ss#2$\hss}\vss}\hss}\box\mybox}
\def\lefttag#1#2#3{\hbox to 0pt{\vbox to 0pt{\vss\hbox to
      0pt{\hss$\ss#2$\hskip#3}\vss}}#1}
\def\righttag#1#2#3{\hbox to 0pt{\vbox to 0pt{\vss\hbox to
      0pt{\hskip#3$\ss#2$\hss}\vss}}#1}
\let\ss\scriptstyle
\def\notags{\def\overtag##1##2##3{##1}
  \def\undertag##1##2##3{##1}\def\lefttag##1##2##3{##1}
  \def\righttag##1##2##3{##1}}
\def\Dot{\lower.2pc\hbox to 2.5pt{\hss$\bullet$\hss}}
\def\Circ{\lower.2pc\hbox to 2.5pt{\hss$\circ$\hss}}
\def\Vdots{\raise5pt\hbox{$\vdots$}}
\def\splicediag#1#2{\xymatrix@R=#1pt@C=#2pt@M=0pt@W=0pt@H=0pt}
\renewcommand\frame[2][3pt]{\hbox{$\vcenter{\hbox{\vrule\vbox
{\hrule\kern#1\hbox{\kern#1$#2$\kern#1}\kern#1\hrule}\vrule}}$}}
\def\ab#1{\widetilde #1^{ab}}
\newcommand\UAC{universal abelian cover}
\newcommand\lineto{\ar@{-}}
\newcommand\dashto{\ar@{--}}
\newcommand\dotto{\ar@{.}}
\newcommand{\C}{\mathbb C}
\newcommand{\N}{\mathbb N}
\newcommand{\Z}{\mathbb Z}
\newcommand{\Q}{\mathbb Q}
\newcommand{\calQ}{\mathcal Q}
\newcommand{\calS}{\mathcal S}
\newcommand{\calD}{\mathcal D}
\newcommand{\calJ}{\mathcal J}
\newcommand{\calN}{\mathcal N}
\newcommand{\m}{\mathfrak m}
\newcommand{\Hom}{\operatorname{Hom}}
\newtheorem*{ECtheorem}{End Curve Theorem}
\newtheorem*{claim}{Claim}
\newtheorem*{theorem*}{Theorem}
\newtheorem{theorem}{Theorem}[section]
\newtheorem{proposition}[theorem]{Proposition}
\newtheorem{lemma}[theorem]{Lemma}
\newtheorem{corollary}[theorem]{Corollary}
\theoremstyle{definition}
\newtheorem{example}[theorem]{Example}
\newtheorem*{example*}{Example}
\newtheorem{definition}[theorem]{Definition}
\newtheorem{remark}[theorem]{Remark}
\newtheorem*{remark*}{Remark}
\begin{document}
\title[The End Curve Theorem for normal complex surface singularities]
{The End Curve Theorem for normal complex surface singularities}
\author{Walter D. Neumann} \thanks{Research supported under NSF grant
  no.\ DMS-0456227 and NSA grant no.\ H98230-06-1-011} \address{Department of
  Mathematics\\Barnard College, Columbia University\\New York, NY
  10027} \email{neumann@math.columbia.edu} \author{Jonathan Wahl}
\thanks{Research supported under NSA grant no.\ FA9550-06-1-0063}
\address{Department of Mathematics\\The University of North
  Carolina\\Chapel Hill, NC 27599-3250} \email{jmwahl@email.unc.edu}
\keywords{surface singularity, splice quotient singularity,
  rational homology sphere, complete intersection
  singularity, abelian cover, numerical semigroup, monomial curve,
  linking pairing} \subjclass[2000]{32S50, 14B05, 57M25,
  57N10}
\begin{abstract} We prove the ``End Curve Theorem,'' which states that
  a normal surface singularity $(X,o)$ with rational homology sphere
  link $\Sigma$ is a splice-quotient singularity if and only if it has
  an end curve function for each leaf of a good resolution tree.

  An ``end-curve function'' is an analytic function $(X,o)\to (\C,0)$
  whose zero set intersects $\Sigma$ in the knot given by a meridian
  curve of the exceptional curve corresponding to the given leaf.

  A ``splice-quotient singularity'' $(X,o)$ is described by giving an
  explicit set of equations describing its universal abelian cover as
  a complete intersection in $\C^t$, where $t$ is the number of leaves
  in the resolution graph for $(X,o)$, together with an explicit
  description of the covering transformation group.

 Among the immediate consequences of the End Curve Theorem are
the previously known results: $(X,o)$ is a splice quotient if it
is weighted homogeneous (Neumann 1981), or rational or minimally
elliptic (Okuma 2005).
\end{abstract}
\maketitle

We consider normal surface singularities whose links are rational
homology spheres ($\Q$HS for short).  The $\Q$HS condition is
equivalent to the condition that the resolution graph $\Gamma$ of a
minimal good resolution be a \emph{rational tree}, i.e., $\Gamma$
is a tree and all exceptional curves are genus zero.

Among singularities with $\Q$HS links, splice-quotient singularities
are a broad generalization of weighted homogeneous singularities. We
recall their definition briefly here and in more detail in Section
\ref{sec:splice quotients}. Full details can be found in \cite{nw1}.

Recall first that the topology of a normal complex surface singularity
$(X,o)$ is determined by and determines the minimal resolution graph
$\Gamma$. Let $t$ be the number of leaves of $\Gamma$. For $i=1,\dots,
t$, we associate the coordinate function $x_i$ of $\C^t$ to the $i$--th
leaf. This leads to a natural action of the ``discriminant group''
$D=H_1(\Sigma)$ by diagonal matrices on $\C^t$ (see Section
\ref{sec:splice quotients}).

Under two (weak) conditions on $\Gamma$, called the ``semigroup"
and ``congruence" conditions, one can write down an explicit set
of $t-2$ equations in the variables $x_i$, which defines an
isolated complete intersection singularity $(V,0)$ and which is
invariant under the action of $D$. Moreover the resulting action
of $D$ on $V$ is free away from $0$, and $(X,o)=(V,0)/D$ is a
normal surface singularity whose minimal good resolution graph is
$\Gamma$. This $(X,o)$ is what we call a \emph{splice quotient
singularity}. Since the covering transformation group for the
covering map $V\to X$ is $D=H_1(X-\{o\})=\pi_1(X-\{o\})^{ab}$, the
covering $(V,0)\to (X,o)$ (branched only at the singular points)
may be called the \emph{universal abelian cover} 
of
$(X,o)$.  In particular, for a splice quotient singularity, one
can write down explicit equations for the \UAC{} just from the
resolution graph, i.e., from the topology of the link.

The link $\Sigma$ of the singularity $(X,o)$ can be expressed as the
boundary of a plumbed regular
neighborhood $N$ of the exceptional divisor $E=E_1\cup\dots\cup E_n$
in the minimal good resolution $\tilde X$ of $(X,o)$.  Then each
meridian curve of an $E_i$ gives a knot $K_i$ in $\Sigma$. A
``meridian curve'' means the boundary of a small transverse disk to
the exceptional divisor $E_i$. If $E_i$ is the exceptional curve
corresponding to a leaf of $\Gamma$ we call $K_i$ an \emph{end knot}.
A (germ of a) smooth complex curve on $\tilde X$ which intersects $E$
transversally on such a leaf curve (and hence which cuts out an
end-knot on $\Sigma$) is called an \emph{end curve}; we also use this
name for the image curve in $X$. 

If $(X,o)$ is a splice-quotient singularity as described above, then
some power $z_i=x_i^d$ of the coordinate function $x_i$ on $V$ is well
defined on $X=V/D$. The zero set in $\Sigma$ resp.\ $X$ of $z_i$ is
the end knot resp.\ end curve corresponding to the $i$--th leaf of
$\Gamma$ (the degree of vanishing may be $> 1$). We say that the end
knot or end curve is \emph{cut out} by the function $z_i$ and that
$z_i$ is an \emph{end curve function}.

Our main result is\comment{WDN: There is certainly a rationale for
  changing the name to ``End Curves Theorem'' or ``End-Curves
  Theorem'' but since we have referred to it elsewhere as ``End Curve
  Theorem'' and others have used this name too, I think we must stick
  with it.}

\begin{ECtheorem}
  Let $(X,o)$ be a normal surface singularity with $\Q$HS link
  $\Sigma$.  Suppose that for each leaf of the resolution diagram
  $\Gamma$ there exists a corresponding end curve function $z_i\colon
  (V,o) \to (\C,0)$ which cuts out an end knot $K_i\subset \Sigma$ (or
  end curve) for that leaf. Then $(X,o)$ is a splice quotient
  singularity and a choice of a suitable root $x_i$ of $z_i$ for each
  $i$ gives coordinates for the splice quotient description.
\end{ECtheorem}

An immediate corollary (conjectured in \cite{nw1} and first proved by
Okuma \cite{okuma2}) is that rational singularities and most minimally
elliptic singularities (the few with non--$\Q$HS link must be
excluded) are splice-quotients. Another direct corollary is the result
of \cite{neumann83}, that a weighted homogeneous singularity with
$\Q$HS link has \UAC{} a Brieskorn complete
intersection.  The special case of the End Curve Theorem, when the
link is an \emph{integral} homology sphere (so that $D$ is trivial),
was proved in our earlier paper \cite{nw2}.

We first proved the End Curve Theorem in summer of 2005, but it has
taken a while to write up in what we hope is an understandable
form. In the meantime, Okuma resp.\ N\'emethi and Okuma in
\cite{okuma3, nemethi-okuma1, nemethi-okuma2} (see also Braun and
N\'emethi \cite{braun-nemethi}) have used this to compute the
geometric genus $p_g$ of any splice-quotient, and to prove for
splice-quotients the Casson invariant conjecture \cite{neumann-wahl90}
for singularities with $\Z$HS links (in which case $D=\{1\}$ so
$V=X$), as well as the N\'emethi-Nicolaescu extension
\cite{nemethi-nicolaescu} of the Casson Invariant Conjecture to
singularities with $\Q$HS links. 

These results of N\'emethi and Okuma give topological interpretations
of analytic invariants; this is analogous to the fact that for
rational singularities some of the important analytic invariants are
topologically determined. As happens for rational singularities, the
set of resolution graphs that belong to splice quotient singularities
is closed under the operations of taking subgraphs and of decreasing
the intersection weight at any vertex \cite{nemethi-okuma2}.  It is
worth noting, however, that rational singularities did not have
explicit analytic descriptions before splice quotients were
discovered; even the fact that their \UAC{}s are
complete intersections was unexpected until it was conjectured in
\cite{nw1} (see also \cite{okuma1}).

Of course, unlike rationality, the property of being a splice-quotient
is not topologically determined---for example, splice quotients, as
quotients of Gorenstein singularities, are necessarily
$\Q$--Gorenstein, which is generally a very special property within a
topological type.  Even more, ``equisingular deformations'' of very
simple splice quotients need not be of this type (see Example
\ref{ex:sell}\comment{WDN-reference provided}).

We once over-optimistically conjectured that $\Q$--Gorenstein
singularities with $\Q$HS links would have complete intersection
\UAC{}s, and although this is false in general \cite{nemethi et
al}, we see it is true for a large class of singularities. There
is a natural arithmetic analog. A standard ``dictionary'' that
developed out of proposals of Mazur and others pairs 3--manifolds
with number fields, knots with primes, and so on. A natural analog
of \UAC{}s of $Q$HS links belonging to complete intersections
would be that the ring of integers of the Hilbert class field of a
number field $K$ be a complete intersection over $\Z$. This is
true, proved by de Smit and Lenstra \cite{de smit}. The analogy
between splice singularities and Hilbert class fields is enticing,
since it is a significant open problem to compute Hilbert class
fields, while the explicit splice singularity description is
easily computed from the resolution diagram.\comment{``In particular,
D.Zagier has suggested a possible number field analogue of the
bilinear pairing on our topological discriminant groups. He didn't
shoot them down completely, but I don't think there was anything
definite enough that we can cite it. `` 
WDN Zagier suggested a candidate JMW Are you sure?  He mentioned a
few ideas, but I thought he shot them all down} \comment{JMW to
add description of contents of the paper, once it's
  complete. WDN: added a draft; note that we have a summary of
  content also in section \ref{sec:overview}}

\medskip
We summarise the proof of the End Curve Theorem in Section
\ref{sec:overview} after first recalling the theory of splice quotient
singularities in Section \ref{sec:splice quotients}. We complete the
proof in Section \ref{sec:proof}.  Some applications and examples are
discussed in the final section \ref{sec:examples}.

Some of the ingredients in our proof could be of independent
interest. We need an extension to the equivariant reducible case
of the theory of numerical semigroups and monomial curves
developed by Delorme, Herzog, Kunz, Watanabe and the authors
\cite{delorme, herzog,
  herzog-kunz, nw2, watanabe}. The necessary parts of this theory are
developed in sections \ref{sec:D-curves}--\ref{sec:nu}. Some
topological results about knots in $\Q$--homology spheres and their
linking numbers and Milnor numbers are collated in Section
\ref{sec:top}.

\section{Splice quotient singularities}\label{sec:splice quotients}
We recall here the detailed construction of splice-quotient singularities. For
full details see \cite{nw1}.

Let $(\bar{X},o)\subset (\C^N,o)$ be a normal surface
singularity whose link $\Sigma =\bar{X}\cap S^{2N-1}_\epsilon$
is a $\Q$HS. Equivalently the minimal good resolution resolves the
singularity by a tree of rational curves. Let $\Gamma$ be the
resolution graph.  In some cases we can construct directly from
$\Gamma$ singularities which have the same link as $\bar{X}$ (but
might well be analytically distinct).

We denote by $A(\Gamma)$ the intersection matrix of the exceptional
divisor (we say ``intersection matrix of $\Gamma$''); this is the
negative definite matrix whose diagonal entries are the weights of the
vertices of $\Gamma$ and whose off-diagonal entries are $1$ or $0$
according as corresponding vertices of $\Gamma$ are connected by an
edge or not.  The \emph{discriminant group} $D(\Gamma)$ is the
cokernel of $A(\Gamma)\colon \Z^n\to \Z^n$. There is a canonical
isomorphism $D(\Gamma)\cong H_1(\Sigma;\Z)$ (if $\Sigma$ were not a
$\Q$HS one would have $D(\Gamma)\cong
\operatorname{Tor}H_1(\Sigma;\Z)$). The order
$|D|=\det(-A(\Gamma))$ of $D(\Gamma)$ is called the
\emph{discriminant} of $\Gamma$.

\subsection{Splice diagram}
\label{subsec:splice diagram} We shall denote by $\Delta$ the
splice diagram corresponding to $\Gamma$. We recall its
construction. If one removes from $\Gamma$ a vertex $v$ and its
adjacent edges then $\Gamma$ breaks into subgraphs $\Gamma_{vj}$,
$j=1,\dots,\delta_v$, where $\delta_v$ is the valency of $v$. We
weight each outgoing edge at $v$ by the discriminant of the
corresponding subgraph; these are the ``splice diagram weights''
(the reader may wish to refer to the illustrative example in
subsection \ref{subsec:example}). The graph $\Gamma$ with all
splice diagram weights added and with the self-intersection
weights deleted is called the \emph{maximal splice diagram}. One
can still recover $\Gamma$ from it. If one now drops the splice
diagram weights around vertices of valency $\le 2$ and then
suppresses all valency 2 vertices to get a diagram with only
leaves (valency $1$) and nodes (valency $\ge 3$) one gets the
\emph{splice diagram} $\Delta$.\comment{JMW: what happens for a
cyclic quotient singularity? WDN: it works, but its a bit trivial
-- the two-vertex splice diagram corresponds to univ abelian cover
$S^3$. JMW:but shouldn't we explicitly exclude this case? WDN: Why? it
fits in the general theory.} The
splice diagram $\Delta$ no longer determines $\Gamma$ in general.

For the purpose of this paper it is convenient to have a version of
the splice diagram in which we do not discard the splice diagram
weights at leaves. We call this the \emph{splice diagram with leaf
  weights} and denote it also by $\Delta$.

\begin{definition}\label{def:linking}
  For two vertices $v$ and $w$ of $\Gamma$ the \emph{linking
    number}.  $\ell_{vw}$ is the product of splice
  diagram weights adjacent to but not on the shortest path from $v$ to
  $w$ in $\Gamma$. If $v=w$ this means the product of splice diagram
  weights at $v$.  (The name comes from the fact that $\ell_{vw}$ is
    $|D|$ times the linking number of the knots in $\Sigma$
    corresponding to $v$ and $w$, see Proposition
    \ref{prop:linking}.)
\end{definition}
The matrix $(\ell_{vw})$ is the adjoint of $-A(\Gamma)$
(\cite{eisenbud-neumann} Lemma 20.2):
$$ (\ell_{vw}) = \operatorname{Adj}(-A(\Gamma))=-|D|A(\Gamma)^{-1}$$
Note that for vertices $v$ and $w$ of $\Delta$, $\ell_{vw}$ can be
computed using only weights of $\Delta$, except that
leaf weights are also needed if $v=w$ is a leaf.

\subsection{Action of the discriminant group on $\C^t$}
\label{subsec:action}
Let $v_i$, $i=1,\dots,t$ be the leaves of $\Gamma$ or $\Delta$ and
associate a coordinate $Y_i$ of $\C^t$ with each leaf\comment{JMW--we used
$x_i$ in the third paragraph of the Introduction. WDN: I know -- I
thought about this when uniformizing the notation, but swung for
leaving $x_i$ in the intro}. Since
$D=\Z^n/A(\Gamma)\Z^n$ with $\Z^n=\Z^{\operatorname{vert(\Gamma)}}$,
each vertex $v$ of $\Gamma$ determines an element $e_v\in D$. There is
a non-degenerate $\Q/\Z$--valued bilinear form on $D$ satisfying
$$e_v\cdot e_w=-\ell_{vw}/|D|\,,$$
the $vw$--entry of $A(\Gamma)^{-1}$.

We get an action of $D$ on $\C^t$ by letting the element $e\in D$ act
via the diagonal matrix
$$\operatorname{diag}(e^{2\pi i \,e\cdot e_{v_1}},\dots,
e^{2\pi i \,e\cdot e_{v_t}})\,.$$
\noindent The elements $e_{v_i}$, $i=1,\dots,t$ generate $D$ (in fact
any $t-1$ of them do, see \cite{nw1} Proposition 5.1).  We thus only
need the splice diagram with leaf weights $\Delta$ to determine this
action.

\subsection{Splice equations}
In contrast to the action of $D$ on $\C^t$, only the splice diagram
$\Delta$ and not leaf weights are needed to discuss splice
equations. We will write down $t-2$ equations in the variables
$Y_1,\dots,Y_t$, grouped into $\delta_v-2$ equations for each node $v$
of $\Delta$. These $\delta_v-2$ equations are weighted homogeneous
with respect to weights determined by $v$. We first describe these
weights.

Fix a node $v$ of $\Delta$. The \emph{$v$-weight} of $Y_i$ is
$\ell_{vv_i}$. We will write down equations of total weight
$\ell_{vv}$. Number the outgoing edges at $v$ by
$j=1,\dots,\delta_v$. For each $j$, a monomial $M_{vj}$ of total
weight $\ell_{vv}$, using only the variables $Y_j$ that are beyond the
outgoing edge $j$ from $v$, is called an \emph{admissible monomial}.
The existence of admissible monomials for every edge at every node is
the \emph{semigroup condition} of \cite{nw1}. Assuming this condition,
choose one admissible monomial $M_{vj}$ for each outgoing edge at
$v$. Then \emph{splice diagram equations} for the node $v$ consist of
equations of the form
$$\sum_{j=1}^{\delta_v} a_{vij} M_{vj} +H_{vi}=0\,\quad
i=1,\dots,\delta_v-2,$$
where
\begin{itemize}
\item all maximal minors of the $(\delta_v-2)\times \delta_v$ matrix
  $(a_{vij})$ have full rank;
\item $H_{vi}$ is an optional extra summand in terms of monomials of
  $v$--weight $>\ell_{vv}$ (most generally, a convergent power series
  in such monomials).
\end{itemize}
Choosing splice diagram equations for each node gives exactly $t-2$
equations, called a \emph{system of splice diagram equations}. In Theorem 2.6 of
\cite{nw1}, it is shown that they determine a 2-dimensional complete
intersection $V$ with isolated singularity at the origin.
\begin{claim}
  The link of this singularity has the topology of the universal
  abelian cover of the singularity link determined by $\Gamma$. In
  particular, this topology is determined by the splice diagram
  $\Delta$ alone.
\end{claim}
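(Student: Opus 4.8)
The plan is to prove this by induction on the number of nodes of $\Delta$, showing at each stage that the link $\partial V$ of $V$ is the universal abelian cover of the plumbed $3$--manifold $\Sigma$ attached to $\Gamma$; the ``in particular'' clause then follows, because the cover of $\Sigma$ is intrinsic and the construction will be seen not to depend on the choices of admissible monomials $M_{vj}$, the coefficients $a_{vij}$, or the higher-weight terms $H_{vi}$. Before starting the induction I would remove the $H_{vi}$: for a fixed node $v$ the leading equations $\sum_j a_{vij}M_{vj}=0$ are homogeneous of degree $\ell_{vv}$ for the $v$--weights, and each $H_{vi}$ involves only monomials of strictly larger $v$--weight, so --- using that $V$ has an isolated singularity --- a semi-quasihomogeneity argument (rescaling in the $v$--weight system one node at a time, noting the $v$--weighted tangent cone is unaffected) shows the analytic type of $\partial V$ is independent of the $H_{vi}$. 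One must check that the one-parameter rescaling in the $v$--weights carries the equations of the other nodes to equations of the same admissible form, so that the reductions can be performed successively; the multigrading makes this work. So assume $H_{vi}=0$.

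For the base case, $\Delta$ has a single node $v$ with leaves $v_1,\dots,v_t$ and edge weights $d_1,\dots,d_t$, and the equations become a Brieskorn--Hamm system $\sum_{j=1}^{t} a_{ij}Y_j^{a_j}=0$, $i=1,\dots,t-2$, with $a_j\ell_{vv_j}=\ell_{vv}$. Its link is Seifert fibred, and reading off the Seifert invariants one matches them with those of the universal abelian cover of $\Sigma$: here $\Sigma$ is the link of a weighted-homogeneous singularity with star-shaped graph $\Gamma$, and the computation of \cite{neumann83} identifies that cover as exactly this Brieskorn--Hamm link. Hence $\partial V$ is the universal abelian cover of $\Sigma$ in the base case.

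For the inductive step, choose an edge $e$ of $\Delta$ joining two nodes $v$ and $w$. Cutting $\Delta$ along $e$ produces splice diagrams $\Delta_1,\Delta_2$ in which $e$ has become a new leaf carrying the leaf weight dictated by the Eisenbud--Neumann splice calculus; on the topological side the corresponding essential torus in the JSJ decomposition of $\Sigma$ writes $\Sigma$ as the splice of the plumbed manifolds $\Sigma_1,\Sigma_2$ for $\Delta_1,\Delta_2$, and by \cite{eisenbud-neumann} the universal abelian cover of $\Sigma$ is the splice of the universal abelian covers of $\Sigma_1$ and $\Sigma_2$ along the lifted tori. On the equation side the coordinates split into $v$--side and $w$--side variables; comparing the size of the admissible monomial $M_{v,e}$ (a monomial in the $w$--side variables) against the other $v$--monomials cuts $\partial V$ along a torus into two pieces, one of which I would identify with $\partial V_1$ minus a regular neighbourhood of its $e$--leaf knot --- the extra $w$--side variables entering only through $M_{v,e}$, which plays the role of a power of the new leaf variable --- and symmetrically for the other piece. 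Thus $\partial V$ is the splice of $\partial V_1$ and $\partial V_2$, which by induction are the universal abelian covers of $\Sigma_1,\Sigma_2$, and matching the splicing tori gives $\partial V=$ the universal abelian cover of $\Sigma$.

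The main obstacle is the equation-side splice decomposition in the inductive step: one must genuinely locate the splicing torus inside $\partial V$ and prove the two sides it bounds are $\partial V_1$ and $\partial V_2$ with neighbourhoods of an end knot removed, a delicate ``boundary of a regular neighbourhood'' analysis of the complete intersection near the locus where the $w$--side variables degenerate. Coupled with this is the discriminant-group bookkeeping: one must check the gluing identifies meridians and longitudes so that the result is the honest universal abelian cover, of degree $|D|$ over $\Sigma$, and not some intermediate abelian cover --- the identity $(\ell_{vw})=\operatorname{Adj}(-A(\Gamma))$ and the behaviour of discriminant groups under splicing control this. A secondary point is the compatibility of the $H_{vi}$--removal with the cutting, since the new leaf variable on each side is a monomial in the old variables, so ``larger $v$--weight'' must be re-read on each piece; one checks the filtrations match, so that the reduction may legitimately be performed once at the outset.
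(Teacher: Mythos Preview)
The paper does not actually prove this claim. Immediately after stating it, the authors write: ``For $\Gamma$ that admit splice-quotient singularities (i.e., the equations can be chosen $D$--equivariantly), this is in \cite{nw1}. The second sentence has been proved in full generality by Helge Pedersen \cite{pedersen}, but a complete proof of the first sentence for general $\Gamma$ has not yet been written up, so it should be considered conjectural.'' So there is no proof in the paper to compare your proposal against; the first sentence of the claim is, in the general (non--$D$--equivariant) setting, explicitly left open.

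That said, your outline has a genuine gap independent of this. The assertion that ``by \cite{eisenbud-neumann} the universal abelian cover of $\Sigma$ is the splice of the universal abelian covers of $\Sigma_1$ and $\Sigma_2$ along the lifted tori'' is not correct for $\Q$HS links. Eisenbud--Neumann splicing is developed for $\Z$--homology spheres, where all covers are trivial and the statement is vacuous. In the $\Q$HS case, $H_1(\Sigma)$ is computed from $H_1(\Sigma_1)$, $H_1(\Sigma_2)$, and the gluing data via Mayer--Vietoris, and the universal abelian cover of $\Sigma$ restricted over $\Sigma_i$ is typically \emph{not} the universal abelian cover of $\Sigma_i$: it is governed by the image of $H_1(\Sigma_i\setminus\text{knot})$ in $H_1(\Sigma)$, not in $H_1(\Sigma_i)$. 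Concretely, the preimage of the splicing torus in the UAC of $\Sigma$ may consist of several tori, and each piece over $\Sigma_i$ may be a cover of $\Sigma_i$ of degree different from $|D_i|$. The inductive hypothesis therefore does not apply directly to the pieces you obtain. The careful bookkeeping in Sections~\ref{sec:curves from diagrams}--\ref{sec:nu} of the paper (Theorems~\ref{th:compat} and~\ref{th:ker}), tracking how $D$, $G$, and the $D_i$, $G_i$ interact under removal of a node, is exactly the sort of analysis needed, and it does not reduce to ``splice the UAC's''.

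A second, smaller, issue is the removal of the $H_{vi}$. Your rescaling argument in the $v$--weight system for one node does not preserve the homogeneity of the equations at the other nodes (the $v$--weights and $v'$--weights are genuinely different gradings), so the claim that the reductions can be performed successively, each preserving an isolated complete intersection singularity, requires more than a one-line multigrading remark. You flag this, but the mechanism you sketch does not settle it.
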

\noindent For $\Gamma$ that admit splice-quotient singularities (i.e.,
the equations can be chosen $D$--equivariantly), this is in
\cite{nw1}. The second sentence has been proved in full generality by
Helge Pedersen \cite{pedersen}, but a complete proof of the first
sentence for general $\Gamma$ has not yet been written
up, so it should be considered conjectural.

\subsection{Splice-quotient singularities}

Suppose now that we can choose a system of splice diagram
equations as above, which are additionally equivariant with
respect to the action of $D$; this is a\comment{WDN Deleted
  ``second'' again; it is the
  \emph{first} condition on $\Gamma$; the other was on $\Delta$} combinatorial
condition on $\Gamma$, called the \emph{congruence condition}
(\cite{neumann-wahl02, nw1}). Then
\begin{theorem*}[\cite{nw1}]
  $D$ acts freely on $V-\{0\}$ and the quotient $(X,o):=(V,0)/D$ is a
  normal surface singularity whose resolution graph is $\Gamma$; moreover,
  $(V,0)\to (X,o)$ is the \UAC. We call $(X,o)$ a
  \emph{splice-quotient singularity}.\comment{J: do we really want
  different size "oh" for the origins? W: I don't really care. They
  are ``oh'' and ``zero''$\in \C^t$}
\end{theorem*}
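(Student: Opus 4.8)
The plan is to produce an explicit good resolution of $V$, read off its dual graph, and then analyze the $D$--action on it. Write $\tilde\Gamma$ for the graph obtained from $\Gamma$ by the standard ``covering'' construction --- the dual resolution graph one expects for the universal abelian cover of $\Sigma$, built so that $D$ acts on it and so that quotienting by $D$ and then resolving the resulting quotient singularities returns $\Gamma$. The goal is then: (a) $V$ admits a good resolution with dual graph $\tilde\Gamma$; (b) the $D$--action on $V$ extends to this resolution, is free away from the exceptional set, and covers the identity downstairs; and (c) everything in the statement follows formally from (a) and (b).

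First I would resolve $V$ node by node. Fix a node $v$ of $\Delta$ and discard the higher-$v$--weight tails $H_{vi}$ from the node-$v$ equations. With respect to the $v$--weights (weight $\ell_{vv_i}$ on $Y_i$, total weight $\ell_{vv}$) the resulting leading system is weighted homogeneous, and since the off-node variables enter only through the admissible monomials $M_{vj}$, after the toric substitution $u_j=M_{vj}$ it becomes a linear system $\sum_j a_{vij}u_j=0$ with all maximal minors nonzero --- i.e. a Brieskorn--Hamm-type complete intersection, exactly the weighted-homogeneous type whose resolution is classical and whose link is Seifert fibred over an orbifold $S^2$ with $\delta_v$ special fibres (this is the weighted-homogeneous case of \cite{neumann83}). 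The perturbations $H_{vi}$ vanish to strictly higher order along every exceptional curve of the associated toric resolution, so they do not change the resolution of this local piece. Performing this at every node and gluing the local resolutions along the germs cut out by the variables $Y_j$ attached to internal edges of $\Delta$ realizes analytically the topological splicing of Seifert pieces that builds the link $\Sigma_V$ of $V$; the numerical bookkeeping along edges (continued-fraction strings, self-intersection numbers) reproduces $\tilde\Gamma$ precisely because the $v$--weights $\ell_{vw}$ are the entries of $\operatorname{Adj}(-A(\Gamma))$. This is the step that uses the semigroup condition, which guarantees each node genuinely contributes such a complete-intersection Seifert piece.

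Second, the congruence condition makes the equations $D$--invariant, so $D$ acts diagonally on $V$ via $e\mapsto\operatorname{diag}(e^{2\pi i\, e\cdot e_{v_i}})$; I would check freeness on $V-\{0\}$ by a coordinate-vanishing argument. If a nontrivial $e\in D$ fixed some $p\in V-\{0\}$, then $Y_i(p)=0$ for every $i$ with $e\cdot e_{v_i}\ne 0$, so $p$ lies in a coordinate subspace indexed by a nonempty set of leaves; but since the $e_{v_i}$ generate $D$ and the admissible monomials force nonvanishing of coordinates to propagate across $\Delta$, that subspace meets $V$ only at $0$, a contradiction. Hence $D$ acts freely on $V-\{0\}$. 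Then $V$ is normal (a complete intersection with isolated singularity, hence Cohen--Macaulay and regular in codimension one), so $X=V/D$ is normal two-dimensional with an isolated singularity, its resolution graph is what one gets from $\tilde\Gamma$ by quotienting by $D$ and resolving quotient singularities, namely $\Gamma$; and $V-\{0\}\to X-\{o\}$ is a connected regular $D$--cover, so the surjection $\pi_1(X-\{o\})\to D$ is an isomorphism on $H_1$ (both sides have order $|D(\Gamma)|$), i.e. $(V,0)\to(X,o)$ is the universal abelian cover.

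The main obstacle is the resolution construction of paragraph two: showing rigorously that the higher-weight tails $H_{vi}$ are negligible for the resolution of each node's local piece, that the local toric resolutions glue to an honest global good resolution and not merely to a topological splicing, and that the resulting self-intersection weights reproduce $\Gamma$ exactly --- which is precisely the computation that singles out the semigroup and congruence conditions as the correct hypotheses. By comparison, the freeness of the $D$--action and the deduction of the remaining assertions are essentially formal.
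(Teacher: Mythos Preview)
This theorem is not proved in the paper: it is stated with the citation \cite{nw1} and serves as background, recalling a result from the authors' earlier work ``Complete intersection singularities of splice type as universal abelian covers.'' There is therefore no proof here to compare your proposal against.

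That said, your outline is broadly the right shape for the argument in \cite{nw1}: one resolves $V$ by analyzing the weighted-homogeneous leading forms at each node (the Brieskorn--Hamm pieces), verifies that the higher-order terms $H_{vi}$ do not disturb the resolution, and assembles the local pieces along the splice structure to obtain a global resolution whose dual graph is the expected cover $\tilde\Gamma$ of $\Gamma$. The freeness of the $D$--action and the identification of the quotient's resolution graph then follow. Your sketch correctly identifies the delicate step as the gluing of local resolutions into a global one and the verification that the numerics reproduce $\Gamma$; in \cite{nw1} this occupies a substantial portion of the paper (Sections 5--8 there), and is not as formal as your summary suggests. Your freeness argument is also somewhat loose: the propagation-of-nonvanishing step needs the full strength of the splice equations and is carried out carefully in \cite{nw1}, Theorem 7.2. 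If you want to reconstruct the proof, that reference is where to look.
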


Theorem 10.1 of \cite{nw1} says that the class of splice-quotient
singularities is natural, in the sense that it does not depend on the
choice of which admissible monomials $M_{vj}$ one chooses to use (so
long as they are chosen equivariantly for the action of $D$). A change
in choice can be absorbed in the extra higher order summands of the
splice equations.

\subsection{Reduced weights}
The $v$--weights of the $Y_i$ used to define splice equations may have
a common factor, so in practice one should use reduced $v$-weights
that divide out this common factor. Precisely, if $v$ is a node the
\emph{reduced $v$--weight} of the variable $Y_j$ is the $v$-weight
$\ell_{vv_j}$ of $Y_j$ divided by the GCD of the $v$--weights of all the
$Y_i$'s.

\subsection{Example}\label{subsec:example}

Consider the resolution graph
$$\splicediag{15}{40}{
&&&&&\overtag{\Circ}{-2}{6pt}\\
\overtag{\Circ}{-2}{6pt}\lineto[dr]&&&&\overtag{\Circ}{-2}{6pt}\lineto[ur]\\
\hbox to 0pt{\hss$\Gamma=$\quad}&\overtag{\Circ}{-2}{6pt}\lineto[r]&\overtag{\Circ}{-3}{6pt}\lineto[r]&
  \overtag{\Circ}{-2}{6pt}\lineto[ur]\lineto[dr]\\
\overtag{\Circ}{-3}{6pt}\lineto[ur]&&&&\overtag{\Circ}{-2}{6pt}\\&}$$
Its maximal splice diagram is
$$\notags\splicediag{15}{45}{
  &&&&&\overtag{\Circ}{-2}{6pt}\\
  \overtag{\Circ}{-2}{6pt}\lineto[dr]^(.25){30}^(.75){2}
  &&&&\overtag{\Circ}{-2}{6pt}\lineto[ur]^(.25){2}^(.75){32}\\
  &\overtag{\Circ}{-2}{6pt}\lineto[r]^(.25){9}^(.75){7}&
  \overtag{\Circ}{-3}{6pt}\lineto[r]^(.25){5}^(.75){15}&
  \overtag{\Circ}{-2}{6pt}\lineto[ur]^(.25){3}^(.75){31}
  \lineto[dr]_(.25){2}_(.75){39}\\
  \overtag{\Circ}{-3}{6pt}\lineto[ur]_(.25){17}_(.75){3}
  &&&&\overtag{\Circ}{-2}{6pt}\\&}$$ and its splice diagram with leaf
weights is
$$\splicediag{15}{45}{
  \lefttag{\Circ}{Y_2}{6pt}\lineto[dr]^(.25){30}^(.75){2}
  &&&&\righttag{\Circ}{Y_3}{8pt}\\
  \hbox to 0pt{\hss$\Delta=$\qquad}
  &\Circ\lineto[rr]^(.25){9}^(.75){15}&&
  \Circ\lineto[ur]^(.25){3}^(.75){32}
  \lineto[dr]_(.25){2}_(.75){39}\\
  \lefttag{\Circ}{Y_1}{6pt}\lineto[ur]_(.25){17}_(.75){3}
  &&&&\righttag{\Circ}{Y_4}{8pt}\\&}$$ We have also shown the
association of $\C^4$--coordinates $Y_1,\dots,Y_4$ with leaves of
$\Delta$.

The discriminant group $D$ is
cyclic of order $33$. Its action on $\C^4$ can be read off from the
splice diagram and leaf weights, and is generated by the four
diagonal matrices corresponding to the four leaves ($\zeta=e^{-2\pi\,i/33}$):
\begin{align*}
e_1&=\langle \zeta^{17}, \zeta^{9},\zeta^{4},\zeta^{6}\rangle\\
e_2&=\langle \zeta^{9}, \zeta^{30},\zeta^{6},\zeta^{9}\rangle\\
e_3&=\langle \zeta^{4}, \zeta^{6},\zeta^{32},\zeta^{15}\rangle\\
e_4&=\langle \zeta^{6}, \zeta^{9},\zeta^{15},\zeta^{39}\rangle\,.
\end{align*}
In this case $e_1$ clearly suffices to generate the group.

Calling the left node $v$, the $v$--weights of the variables
$Y_1,Y_2,Y_3,Y_4$ are read off from the splice diagram as $18$,
$27$, $12$, and $18$. The GCD is $3$, so the reduced weights are
$6$, $9$, $4$, $6$. The total reduced weight for that node is
$54/3=18$, so admissible monomials for the two edges departing $v$
to the left must be $Y_1^3$ and $Y_2^2$. For the edge going right
there are two monomials of the desired weight: $Y_3^3Y_4$ and
$Y_4^3$. One checks that all these monomials transform with the
same character under the $D$--action ($e_1$ acts on each by
$\zeta^{18}$), so we can choose either one of $Y_3^3Y_4$ and
$Y_4^3$; we choose $Y_4^3$.  The number of equations to write down
for this node is $\delta_v-2=1$. We choose ``general
coefficients'' $1,1,1$ and write down
$$Y_1^3+Y_2^2+Y_4^3=0\,.$$

For the right node $v'$ the reduced $v'$--weights of $Y_1$, $Y_2$,
$Y_3$, $Y_4$ are $4$, $6$, $10$, $15$, and total reduced weight is $30$.
Hence admissible monomials are $Y_3^3$ and $Y_4^2$ for the edges going
right, and a choice of $Y_2^5$, $Y_1^3Y_2^3$, or $Y_1^6Y_2$ for the
leftward edge; again all choices are $D$--equivariant ($e_1$ acts by
$\zeta^{12}$ on all). We make our choices and write a second equation
$$Y_2^5+Y_3^3+Y_4^2=0\,.$$
The results of \cite{nw1} tell us that the two equations define a
normal complete intersection singularity $(V,0)$, that the action of
$D=\Z/33$ on it is free off the singular point, and the quotient is a
normal singularity $(X,o)$ with resolution graph $\Gamma$. A mental
calculation shows that our coefficients are in fact general; any other
choice can be reduced to these by diagonal coordinate transformation
of $\C^4$.

The End Curve Theorem tells us that if a singularity has this
resolution graph and has end curve functions for its four leaves, then
it is a higher weight deformation of the above example, i.e.,
(possibly) deformed by adding higher weight terms equivariantly in the
two equations.

\section{Overview of the proof}\label{sec:overview}

The End Curve Theorem was proved in \cite{nw2} when the link
$\Sigma$ is a $\Z$HS (so there is no group action). We first
outline the proof in this case. We thus assume we have an end
curve function $z_i$ on $X$ associated with each leaf of $\Gamma$.
By replacing $z_i$ by a suitable root if necessary one can assume
its zero-set is not only irreducible but also reduced (this uses
that $\Sigma$ is a $\Z$HS--cf. Section \ref{sec:proof}). The claim
then is that these functions $z_i$ generate the maximal ideal of
the local ring of $X$ at $o$ and that $X$ is a complete
intersection given by splice equations in these generators. The
main step is to show that each curve $C_i=\{z_i=0\}\subset X$ is a
complete intersection curve.

\subsection{}\label{step1} 
Consider the curve $C_1$ given by $z_1=0$.  For $j> 1$ the function
$z_j$, restricted to $C_1$, has degree of vanishing
$\ell_{1j}$ at $o$, so the numbers $\ell_{1j}$, $j=2,\dots,t$,
generate a numerical semigroup $S\subset \N$ which is a sub-semigroup
of the full value semigroup $V(C_1)$ (the semigroup generated by all
degrees of vanishing at $o$ of functions on $C_1$). The $\delta$--invariant
$\delta(S)$ and the $\delta$--invariant $\delta(C_1):=\delta(V(C_1))$
therefore satisfy
\begin{equation}
  \label{eq:s1}
2\delta(C_1)\le 2\delta(S)
\end{equation}
(the $\delta$-invariant counts the number of gaps in the semigroup,
i.e., the size of $\N-S$).

\subsection{}\label{step2} Classical theory of numerical semigroups,
developed further in Theorem 3.1 of \cite{nw2},  shows via an induction over
subdiagrams of $\Gamma$ that
\begin{equation}
  \label{eq:s2}
2\delta(S)\le 1+\sum_{v\ne1}(\delta_v-2)\ell_{1v}\,,
\end{equation}
with equality if and only if the semigroup condition holds for
$\Delta$ at every vertex and edge pointing away from $1$. Moreover, if
equality holds, the monomial curve for this semigroup is a complete
intersection, with maximal ideal generated by $z_2,\dots,z_n$.

\subsection{}\label{step3}
The $\delta$-invariant of a curve is determined by Milnor's $\mu$
invariant, which in our case is a topological invariant, computable
in terms of the splice diagram (Sect.\ 11
of \cite{eisenbud-neumann}, see also Lemma \ref{le:chi of fiber}) as
\begin{equation}
  \label{eq:s3}
2\delta(C_1)=1+\sum_{v\ne1}(\delta_v-2)\ell_{1v}\,.
\end{equation}

\subsection{}\label{step4}
Comparing \eqref{eq:s1}, \eqref{eq:s2}, and \eqref{eq:s3}, we must
have equality in \eqref{eq:s1} and \eqref{eq:s2}. It follows that
$S=V(C_1)$. Moreover, by step \ref{step2}, the monomial curve for
$S$ is a complete intersection with maximal ideal generated by
$z_2,\dots,z_t$. Since $C_1$ is a positive weight deformation of
this monomial curve, $C_1$ is also a complete intersection,  with
maximal ideal generated by $z_2,\dots,z_t$. It follows that
$(X,o)$ is a complete intersection with maximal ideal generated by
$z_1,\dots,z_t$.

\subsection{}\label{step5}
Repeating the above for all leaves $i=1,\dots,t$ shows that the
semigroup conditions hold. One can thus choose admissible monomials
for every node, and it is then not hard to deduce that equations of
splice type hold. Finally, one deduces that $(X,o)$ is defined by these
equations, completing the proof.

\subsection{General case}
We must now describe how the above proof is modified when $\Sigma$ is
not a $\Z$HS, so $D$ is non-trivial. It is not hard to show that the
functions $x_i$ (appropriate roots of the end curve functions $z_i$)
are defined on the \UAC{} $V$ of $X$. But the curve
$C_1=\{x_1=0\}$ is no longer an irreducible curve, so the theory of
numerical semigroups of \ref{step2} above cannot be used.  \comment{[WDN: Do we
need the following? It seems to me to be implicit] The need to
construct an alternate set-up is what accounts for the time lag
between \cite{nw1} and the current paper.  In particular, Sections 3
through 7 are needed to prove Theorem \ref{th:maj}, whose proof in the
case of trivial $D$ is short.}

We extend the theory of value semigroups and the appropriate results
concerning them to reducible curves that have an action of a group $D$
that is transitive on components; the value semigroup is now a
subsemigroup of $\N\times \hat D$, where $\hat D$ is the character
group of $D$. The inductive argument of step \ref{step2} must now deal
with subsemigroups of a semigroup $\N\times \hat D$, where $\hat D$
changes at each step of the induction. Moreover, we must show in the
end that this extension allows us to deduce, as before, that $V$ is a
complete intersection with maximal ideal generated by the $x_i$, that
the semigroup conditions hold, which guarantee that admissible
monomials exist, but also that the congruence conditions hold,
allowing us to choose the monomials $D$--equivariantly.  Once this is
done, one again deduces that equations of splice type hold on
$V$. Finally, using the main theorem of \cite{nw1} one deduces that
$V$ is defined by these equations and that $X=V/D$, thus completing
the proof.

The necessary theory of reducible curves and their value semigroups is
developed in sections \ref{sec:D-curves}--\ref{sec:nu} and the proof is
completed in section \ref{sec:proof}. Some needed topological computations
are collected together in section \ref{sec:top}.

\section{$D$--curves}\label{sec:D-curves}
A \emph{$D$--curve} is a reduced curve germ $(C,o)$ on which a finite
abelian group $D$ acts effectively (i.e., $D\rightarrow \text{Aut}(C)$
is injective) and transitively on the set of branches.  Denote the
branches $(C_{i},o)$, $i=1,\dots,r$.  If $H\subset D$ is the subgroup
stabilizing (any) one branch, then $F:= D/H$ acts simply transitively
on the set of branches of $C$ (recall that any effective transitive
action of an abelian group is simply transitive).  $D$ also acts on
the normalization $\tilde{C}$ of $C$, a disjoint union of $r$ smooth
curves $\tilde{C_{i}}$.  On the level of analytic local rings, $D$
acts on $(R,\m)$ (the local ring of $C$), on the direct sum of its
branches $R/\mathcal P _{i}$, and on its normalization $\tilde{R}=
\oplus_{i=1}^r \tilde{R_{i}}$, where each $\tilde{R_{i}}$ is a
convergent power series ring $\C\{\{y_{i}\}\}$.  One may assume that
the parameters $y_i $ form one $D$--orbit (up to multiplication by
scalars). $H$ then acts on each $y_i $ via the same character,
independent of $i$.\comment{WDN: I've rewritten and shortened this
  section and the next from this point on to give a definition of the
  value semigroup and organize better}

The natural valuation $v_{i}$ on $\tilde{R_{i}}$ induces one on
$\tilde{R}$ by value on the $\tilde{R_{i}}$--component; the induced
valuation on $R$ is given by order of vanishing of a function $f\in R$
along the branch $C_{i}$.  (Of course, define $v_{i}(0)=\infty$.)  $D$
permutes the branches and hence the valuations, with
$$v_{i}(\sigma (f))=v_{\sigma (i)}(f) \text{ for all }  \sigma \in D.$$
Thus, if $f$ is an eigenfunction for the $D$--action, then
$v_i(f)=v_j(f)$ for all $i,j$; we then just write $v(f)$.
\begin{definition}\label{def:value semigroup} 
  Denote the character group of $D$ by $\hat D$.  The \emph{value
    semigroup} $\calS(C)$ of the $D$--curve $(C,o)$ is the
  subsemigroup of $\N\times\hat D$ consisting of all pairs
  $(v(f),\chi)$ with $\chi\in \hat D$ and $f\in R$ a
  $D$--eigenfunction with character $\chi$.
\end{definition}

$\tilde{R}$, and hence $R$, has a natural $D$--filtration given
by the ideals
$$J_{n}=\{f~|~v_{i}(f)\geq n \text{ for all } i\}.$$ We denote by
$\tilde R'$ and $R'$ the associated gradeds for this filtration.  $R'$
is the graded ring of a reduced weighted homogeneous curve $C'$ (thus
a union of ``monomial curves''), again with an effective action of $D$
acting transitively on the $r$ branches. Each $J_{n}\tilde
R/J_{n+1}\tilde R$ is for $n\geq 0$ a vector space of dimension $r$
and the associated graded ring $\tilde R'=Gr_{J}\tilde{R}$ is a direct
sum of $r$ polynomial rings in one variable (of degree 1).

Recall that the \emph{delta invariant} $\delta(C)=\delta (R)$ is the
length of the $R$--module $\tilde{R}/R$, which in this case is its
dimension as a $\C$--vector space.

\begin{proposition}\label{prop:delta} 
  $\delta(R)=\delta(R')$ and $\calS(C)=\calS(C')$.  The
  value semigroup $\calS(\tilde C)$ of the normalization has exactly $r$ elements in
  each degree. The delta-invariant of $R$ (or $C$) is equal to the
  size of the complement of $\calS(C)$ in $\calS(\tilde C)$.
\end{proposition}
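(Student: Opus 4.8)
The plan is to exploit the $D$-filtration $\{J_n\}$ and the fact that passing to the associated graded ring preserves all the relevant invariants. First I would establish $\delta(R)=\delta(R')$. Since $\tilde R/R$ is a finite-length module, it sits inside $\tilde R/J_N\tilde R$ for $N$ large, and one has a filtration of $\tilde R/R$ induced by the $J_n$. The key point is that taking associated gradeds is exact on finite-length modules: $\dim_{\C}(\tilde R/R)=\sum_n \dim_{\C}\bigl((J_n\tilde R + R)/(J_{n+1}\tilde R + R)\bigr)$, and each summand equals $\dim_{\C}(J_n\tilde R'/(J_n\tilde R'\cap R'))$ once one identifies $R'$ as the image of $R$ in $Gr_J\tilde R=\tilde R'$. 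Concretely, $\delta(R')$ is the $\C$-codimension of $R'$ in $\tilde R'$, and a careful bookkeeping of each graded piece shows this matches $\delta(R)$ degree by degree. I would phrase this as: the filtration on $\tilde R$ induces one on the quotient $\tilde R/R$ whose graded pieces are exactly the graded pieces of $\tilde R'/R'$.

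Next, $\calS(C)=\calS(C')$. An element of $\calS(C)$ is $(v(f),\chi)$ for a $D$-eigenfunction $f$ with character $\chi$. Given such an $f$ with $v(f)=n$, its leading term in $J_n\tilde R/J_{n+1}\tilde R$ is a nonzero $D$-eigenvector (same character $\chi$, since passing to the leading form is $D$-equivariant) lying in $R'$, so $(n,\chi)\in\calS(C')$. Conversely, a homogeneous $D$-eigenelement of $R'$ of degree $n$ and character $\chi$ lifts to some $f\in R$ with $v_i(f)\ge n$ for all $i$ and $v_i(f)=n$ for at least one $i$; but because $f$ can be chosen to be a $D$-eigenfunction (average over $D$ against $\chi$, which does not kill the leading term since the leading term is already a $\chi$-eigenvector and $D$ preserves the filtration), we get $v_i(f)=n$ for all $i$, hence $(n,\chi)\in\calS(C)$. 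This two-way argument gives the equality.

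For the statement that $\calS(\tilde C)$ has exactly $r$ elements in each degree: this follows immediately from the remark already in the text that $J_n\tilde R/J_{n+1}\tilde R$ has dimension $r$ for $n\ge 0$, together with the fact that $\tilde R=\oplus_{i=1}^r\C\{\{y_i\}\}$ with the $y_i$ forming a single $D$-orbit. Indeed $\tilde R'=\oplus_{i=1}^r\C[y_i]$ with each $y_i$ in degree $1$, and $D$ permutes the $r$ lines spanned by $y_i^n$ transitively; decomposing this $r$-dimensional permutation-type representation into characters of $D$ (it is induced from the character of $H$ by which $H$ acts on a fixed $y_i$, raised to the $n$-th power) shows that the values $(n,\chi)$ occurring are exactly $r$ in number, counted with the multiplicity coming from $F=D/H$. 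Finally, the delta invariant equals $\#(\calS(\tilde C)\setminus\calS(C))$: since both $R'$ and $\tilde R'$ are $D$-graded and $D$ is abelian, each graded piece decomposes into character eigenspaces, and $\delta(R')=\dim_{\C}(\tilde R'/R')$ counts precisely the character-degree pairs present in $\tilde R'$ but not in $R'$, i.e.\ the elements of $\calS(\tilde C)\setminus\calS(C')=\calS(\tilde C)\setminus\calS(C)$; one must check each such pair contributes exactly $1$, which holds because each $(n,\chi)$-eigenspace of $\tilde R'$ is at most one-dimensional (the $y_i^n$ for $i$ in an $F$-orbit span a space on which $F$ acts by the regular-type representation, so each character appears once).

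The main obstacle I anticipate is the bookkeeping in the last step: keeping track of the interplay between the grading by $\N$, the character decomposition under $\hat D$, and the branch-permutation action, to be sure that ``size of the complement'' is literally counting lattice points in $\N\times\hat D$ with multiplicity one each, rather than with some multiplicity coming from $r$ or $|F|$. Making the eigenspace-is-at-most-one-dimensional claim precise — and hence that $\calS(C)$ as a subset of $\N\times\hat D$ (not a multiset) has complement of size exactly $\delta$ — is where the argument needs the most care, and it is exactly where the hypothesis that $D$ acts \emph{transitively} on branches (so the branch representation is induced from $H$) does the work.
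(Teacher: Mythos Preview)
Your proposal is correct and follows essentially the same route as the paper: reduce to the associated graded via the $J_n$-filtration, then use the character decomposition of each $J_n\tilde R/J_{n+1}\tilde R$ to see that it breaks into exactly $r$ one-dimensional eigenspaces, so that counting missing eigenspaces gives $\delta$. The only organizational difference is that the paper isolates your ``eigenspace-is-at-most-one-dimensional'' claim as a separate lemma (the $D$-characters appearing in degree $n$ form a single $\hat F$-coset of $\hat D$, proved by multiplying a degree-$1$ eigenbasis by $g^{n-1}$), whereas you argue it inline via the induced representation from $H$.
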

\begin{proof}
  The delta invariant $\delta (R)$ is finite, so for $n$ sufficiently
  large, $J_{n}R=J_{n}\tilde R$.  Computing
  $\delta(R)$ by summing over graded pieces of $R$ and $\tilde{R}$,
  one sees that $\delta(R)=\delta(R')$  (this is a general fact about
  reduced curves).

$\tilde R$ splits (as a vector space) as a sum $\tilde R=\bigoplus
\tilde R_\chi$ over the characters of $D$ and this splitting is
compatible with the filtration $\{J_n\}$ (in the sense that
$J_n\,\tilde R_\chi=(J_n\tilde R)_\chi$ for any $D$--character
$\chi$). Thus $\calS(\tilde R)=\calS(\tilde R')$. The same
argument applies to show $\calS(R)=\calS(R')$.
  
The following lemma shows that the splitting by characters splits each
$J_n\tilde R/J_{n+1}\tilde R$ into $r$ $1$--dimensional summands (and
trivial summands for the remaining $|D|-r$ characters), and hence also
splits the subspaces $J_nR/J_{n+1}R$ into $1$--dimensional
summands. Since we can compute the delta invariant $\delta(R')$ by
counting these summands, the proposition then follows.
\end{proof}
Consider the natural
exact sequence of character groups
$$0\rightarrow \hat{F}\rightarrow \hat{D}\rightarrow
\hat{H}\rightarrow 0.$$
\begin{lemma}\label{le:char} The $D$--eigenfunctions of
  $J_{n}\tilde R/J_{n+1}\tilde R$ ($n>0$) all have the same
  $H$--character.  A collection of them are $\C$--linearly independent
  if and only if their $D$--characters are distinct.  They form a basis
  of $J_{n}\tilde R/J_{n+1}\tilde R$ if and only if their
  $D$--characters form exactly one $\hat{F}$--coset of $\hat{D}$.
\end{lemma}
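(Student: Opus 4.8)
The plan is to compute the $D$-module $W_n:=J_n\tilde R/J_{n+1}\tilde R$ ($n>0$) explicitly and read off all three assertions from its isotypic decomposition. First I would fix the uniformizers so that $\tilde R_i=\C\{\{y_i\}\}$, the set of lines $\{\C y_1,\dots,\C y_r\}$ is permuted simply transitively by $F=D/H$ (via the simply transitive action of $F$ on branches), and $H$ acts on every $y_i$ by one and the same character, which I will call $\chi_0\in\hat H$; all of this is arranged in the paragraph preceding the lemma. Since $J_m\tilde R=\bigoplus_i y_i^m\C\{\{y_i\}\}$, the $r$ classes $\bar y_1^n,\dots,\bar y_r^n$ form a $\C$-basis of $W_n$, and this identification is $D$-equivariant because the filtration $\{J_m\}$ and the splitting $\tilde R=\bigoplus_i\tilde R_i$ are both $D$-stable. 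For $h\in H$ one has $h\cdot\bar y_i^n=\chi_0(h)^n\,\bar y_i^n$, so $H$ acts on \emph{all} of $W_n$ through the single character $\chi_0^n$; in particular every $D$-eigenfunction in $W_n$ restricts on $H$ to $\chi_0^n$, which is the first assertion.

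Next I would identify $W_n$ as an induced module. For general $\sigma\in D$, $\sigma\cdot\bar y_i^n$ is a nonzero scalar times $\bar y_{\sigma i}^n$, where $i\mapsto\sigma i$ is the simply transitive $F$-action on $\{1,\dots,r\}$. Thus $L_1:=\C\bar y_1^n$ is $H$-stable with character $\chi_0^n$, and the $D$-equivariant map $\C[D]\otimes_{\C[H]}L_1\to W_n$, $\sigma\otimes v\mapsto\sigma\cdot v$, is surjective by transitivity; comparing dimensions ($\dim W_n=r=[D:H]$) shows it is an isomorphism, i.e.\ $W_n\cong\operatorname{Ind}_H^D(\chi_0^n)$. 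For abelian $D$, Frobenius reciprocity gives $\langle\operatorname{Ind}_H^D\psi,\chi\rangle_D=\langle\psi,\chi|_H\rangle_H$, so $\operatorname{Ind}_H^D\psi$ is the multiplicity-free sum of exactly those $\chi\in\hat D$ with $\chi|_H=\psi$; by the exact sequence $0\to\hat F\to\hat D\to\hat H\to 0$ this set of $\chi$'s is a single coset of $\hat F$, necessarily of size $|\hat F|=r$. Applying this with $\psi=\chi_0^n$ yields $W_n=\bigoplus_{\chi|_H=\chi_0^n}\C_\chi$, a direct sum of $r$ distinct one-dimensional $D$-submodules indexed by that $\hat F$-coset.

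The remaining two assertions are then immediate. The $D$-eigenfunctions of $W_n$ are precisely the nonzero vectors of the summands $\C_\chi$, so a collection of them is $\C$-linearly independent iff no two lie in a common summand, i.e.\ iff their $D$-characters are distinct; and since $\dim W_n=r$ is exactly the size of the coset, such a collection spans (hence is a basis) iff it realizes all $r$ characters, i.e.\ iff its set of $D$-characters is exactly that one $\hat F$-coset (any sub-collection of eigenfunctions has characters inside this coset, so if they form \emph{some} $\hat F$-coset it must be this one). The one point that genuinely needs an idea — and thus the main obstacle — is the multiplicity-one statement, equivalently the identification with $\operatorname{Ind}_H^D(\chi_0^n)$; if one wishes to avoid invoking induced characters, it can be done by hand: an eigenvector $\sum_i a_i\bar y_i^n$ with character $\chi$ forces $a_{\sigma i}=\chi(\sigma)^{-1}(\text{scalar})\,a_i$ for all $\sigma$, so $a_1$ determines every $a_i$ by transitivity of the $F$-action, giving $\dim\C_\chi\le 1$; after that, everything is bookkeeping with the exact sequence of character groups.
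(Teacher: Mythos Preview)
Your argument is correct. The route differs from the paper's, and the difference is worth noting.

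The paper does not analyze $W_n=J_n\tilde R/J_{n+1}\tilde R$ for each $n$ separately. Instead it treats only $n=1$: it diagonalizes the $D$--action on the $r$--dimensional space $W_1$, observes that $H$ acts by the single character $\chi_0$ so all $D$--characters lie in one fiber of $\hat D\to\hat H$, and then picks any $D$--eigenfunction $g\in W_1$ and notes that multiplication by $g^{n-1}$ gives a $D$--equivariant isomorphism $W_1\xrightarrow{\sim}W_n$ that shifts every character by $(n-1)$ times the character of $g$. This immediately transports the $n=1$ statement to every $n>0$.

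Your approach identifies each $W_n$ with $\operatorname{Ind}_H^D(\chi_0^n)$ and reads off multiplicity-freeness from Frobenius reciprocity (or, equivalently, from your direct eigenvector computation). This is more systematic and, in particular, makes the multiplicity-one statement completely explicit; the paper's proof is terse on exactly this point for $n=1$ (it asserts that the characters ``form one fiber'' without spelling out why they are distinct, which is precisely the induced-representation fact you prove). On the other hand, the paper's multiplication-by-$g^{n-1}$ trick is a clean way to avoid redoing the representation theory at every level: once $W_1$ is understood, all $W_n$ are isomorphic $D$--modules up to a character twist. Either argument suffices; yours is the more self-contained of the two.
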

\begin{proof} Diagonalize the action of $D$ on the $r$--dimensional
  space $J_{1}\tilde R/J_{2}\tilde R$.  As mentioned above, $H$ acts
  via one character for this space, so the $D$--characters involved
  form one fiber of the map $\hat{D}\rightarrow\hat{H}.$ Choose any
  $D$--eigenfunction $g\in J_{1}/J_{2}$.  Then multiplication by
  $g^{n-1}$ maps $J_{1}\tilde R/J_{2}\tilde R$ isomorphically onto
  $J_{n}\tilde R/J_{n+1}\tilde R$,
  shifting the characters by $(n-1)$ times the character of $g$.  So,
  $r$ distinct $D$--characters appear in $J_{n}\tilde R/J_{n+1}\tilde R$.
\end{proof}
\section{Weighted homogeneous $D$--curves}
\label{sec:more}

Continue the setup of the last section, but assume at the start that
$R=R'$ is a positively graded ring, so that $C$ is a weighted
homogeneous curve with a $D$--action. We can choose a (not
necessarily minimal) set of homogeneous generators $x_{i}$,
$i=1,\dots,m$, of $R$, of weights $\ell_{i}$ (necessarily with greatest
common divisor 1), so that $D$ acts on $x_{i}$ via a character
$\chi_{i}$.  The value semigroup $\calS(C)$ is then generated by
the elements $(\ell_i,\chi_i)$.

The $x_{i}$ embed $C\subset \C^{m}$. By scaling the $x_i$ if necessary
we may assume $(1,1,\dots,1)\in C$.  Then one of the $r$ branches of
$C$ is the monomial curve 
$\{(u^{\ell_{1}},\dots,u^{\ell_{m}}):u\in \C\}$.

There are two subgroups of $(\C^*)^{m}$ which act on $C$ via
multiplication in the entries: $D$, embedded via
$(\chi_{1},\dots,\chi_{m})$; and $\C^*$, via $u\mapsto
(u^{\ell_{1}},\dots,u^{\ell_{m}})$.

\begin{definition} $G\subset (\C^*)^m$ is the subgroup generated by $D$
  and $\C^* $, embedded as above.
\end{definition}

$D$ acts transitively on the branches of $C$, so $G$ acts simply
transitively on $C-\{0\}$.  Thus, we may view $G=C-\{0\}$, while $C$
is the closure of $G$ in $\C^m$.

Consider next $H:= D\cap\C^*$, the subgroup of $D$ which stabilizes
the branch given by the monomial curve through $(1,1,\dots,1)$.  As a
subgroup of $\C^*$, it is cyclic, and the quotient group $F:=
D/H$ has order $r$, the number of branches of $C$.  Using the inclusion
map of $H$ to $D$, and the negative of its inclusion map to $\C^*$, we
deduce exact sequences:
\begin{equation}
  \label{eq:exact1}
  1\rightarrow H\rightarrow \C^* \times D\rightarrow
  G\rightarrow 1  \,,
\end{equation}
\begin{equation}
  \label{eq:exact2}
  1\rightarrow \C^*\rightarrow G\rightarrow F\rightarrow
  1\,.
\end{equation}

Since $\C^*$ is a divisible group, the exact sequence
\eqref{eq:exact2} splits (but not canonically).  This means that there
exists a subgroup of $G$ (though not necessarily of $D$), isomorphic
to $F$, which acts on the curve and acts simply transitively on the
set of branches.  Dualizing \eqref{eq:exact2}, we get a natural exact
sequence of groups of characters
\begin{equation}
  \label{eq:exact3}
0\rightarrow \hat{F}\rightarrow \hat{G}\rightarrow \Z
   \rightarrow 0\,,
\end{equation}
which splits non-canonically to give an abstract isomorphism
$$\hat{G}\cong \Z\oplus \hat{F}\,.$$
Dualizing \eqref{eq:exact1} gives a natural inclusion
$$\hat{G}\subset \Z \oplus \hat{D}\,.$$

The inclusion $G\subset (\C^*)^m$ induces a surjection $\Z^m\to \hat
G$, giving $m$ natural characters $\bar\chi_{k}\in \hat G$,
$k=1,\dots,m$, which generate $\hat G$.  The image of $\bar\chi_{k}$
in $\Z \oplus \hat{D}$ is $(\ell_{k},\chi_{k})$.  So any character of
$G$ may be written additively as\footnote{We write $\hat G$ additively
  in the following even though it is more natural to think of $\hat D$
  as a multiplicative group.}
$$\hat I:=\sum_{k=1}^{m} i_k\bar\chi_k,$$
where $I=(i_1,\dots,i_m)\in \Z^m$, and $I$ and $I'$ represent equal
characters $\hat I=\hat I'\in\hat G$ if and only if their weights and 
 $D$--characters are equal.
\begin{definition} A character $X\in \hat{G}$ is called
  \emph{non-negative} if it is a linear combination of the
  $\bar\chi_k$ with non-negative coefficients.
Thus, the set of non-negative characters forms the value
semigroup $\calS(C)$ as a subsemigroup of $\hat G\subset \Z\times\hat
D$. The value semigroup $\calS(\hat C)$ of the normalization is
$\pi^{-1}(\N)\subset \hat D$. So $\delta(C)$ is the number of elements
of $\pi^{-1}(\N)$ that are not non-negative characters.
\end{definition}

The $x_{i}$ embed $C\subset \C^{m}$. 
Let $I=(i_1,\dots,i_m)$ be an $m$--tuple, with all $i_k\geq 0$.  Each
monomial $x^{I}:= x_{1}^{i_{1}}x_{2}^{i_{2}}\dots x_{m}^{i_{m}}$ is
homogeneous of weight $\sum i_k \text{wt}(x_k)$ and transforms
via 
the character $\chi_I:= \prod \chi_k^{i_k} \in \hat{D}$.  Introducing weighted
coordinates $Y_1 ,\dots,Y_{m}$ in $\C^{m} $, we can summarise
results of this and the previous section.
\begin{proposition}\label{prop:G} $x^{I}$ equals a constant times
  $x^{I'}$ if and only if their weights and $D$--characters are
  equal.  With the $x_{i}$ scaled so that
  $(1,1,\dots,1)\in C'\subset\C^{m}$,  the $D$--curve $C\subset\C^{m}$ is
  the closure of the $G$--orbit of $(1,1,\dots,1)$. The ideal of $C\subset
  \C^{m} $ is generated by all differences $Y^{I}-Y^{I'}$ of monomials
  with equal $G$--characters: $\hat I=\hat I'$.

  The finite abelian ``group of components'' $F=G/\C^*$ is dual (hence
  isomorphic) to the torsion subgroup of $\hat{G}$.

  The delta-invariant $\delta(R)=\delta(R')$ is computed by summing,
  over all $n\geq 0$, $r$ minus the number of monomial elements $x^I $
  of weight $n$ and distinct $D$--characters.\qed
\end{proposition}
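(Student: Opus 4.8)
\medskip
Since Proposition~\ref{prop:G} only collates facts already at hand, the plan is to verify its four assertions in turn, leaning on Lemma~\ref{le:char}, Proposition~\ref{prop:delta}, and the exact sequences \eqref{eq:exact1}--\eqref{eq:exact3}; the one genuinely new point is a standard statement about binomial (``toric'') ideals.

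First I would use the identification $C-\{0\}=G$ coming from the simply transitive $G$--action, with $(1,\dots,1)\in C$ corresponding to $1\in G$. Under it the coordinate function $x_k$ restricts on $C$ to the character $\bar\chi_k\in\hat G$, so $x^{I}$ restricts to $\hat I=\sum_k i_k\bar\chi_k$. Hence $x^{I}$ and $x^{I'}$ agree up to a nonzero constant on $C$ precisely when $\hat I=\hat I'$ in $\hat G$, and evaluation at $(1,\dots,1)$ then forces the constant to be $1$. Via the natural inclusion $\hat G\subset\Z\oplus\hat D$ dual to \eqref{eq:exact1}, the equality $\hat I=\hat I'$ is equivalent to the two monomials having equal weight and equal $D$--character. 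This is the first assertion; it also shows that the (graded) coordinate ring $R=R'$ of $C$ is the semigroup ring $\C[\calS(C)]$ sitting inside the group ring $\C[\hat G]$ of functions on $G$.

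For the ideal of $C\subset\C^m$, consider the surjection $\phi\colon\C[Y_1,\dots,Y_m]\twoheadrightarrow R$ sending $Y_k\mapsto\bar\chi_k$. It kills every binomial $Y^{I}-Y^{I'}$ with $\hat I=\hat I'$, so the ideal $\mathfrak a$ they generate is contained in $\ker\phi$. Conversely, modulo $\mathfrak a$ any element of $\C[Y_1,\dots,Y_m]$ is congruent to a $\C$--linear combination of monomials with pairwise distinct $\hat G$--characters; since distinct characters of $G$ are linearly independent as functions on $G$, such a combination lies in $\ker\phi$ only when it vanishes. Hence $\ker\phi=\mathfrak a$, and, together with the description of $C$ as the closure of the $G$--orbit of $(1,\dots,1)$ recalled in this section, this gives the second assertion. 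I expect this toric-ideal step to be the only part requiring a real argument rather than a citation.

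Finally, dualizing the (split) exact sequence \eqref{eq:exact2} gives \eqref{eq:exact3}, $0\to\hat F\to\hat G\to\Z\to 0$; as $\hat F$ is finite and $\hat G/\hat F\cong\Z$ is torsion free, $\hat F$ is exactly the torsion subgroup of $\hat G$, and a finite abelian group is (non-canonically) isomorphic to its dual, which is the third assertion. For the delta invariant, Proposition~\ref{prop:delta} gives $\delta(R)=\delta(R')$ and identifies it with the size of $\calS(\tilde C)\setminus\calS(C)$; by Lemma~\ref{le:char} (equivalently, by exactness of \eqref{eq:exact3}, whose fibres over $\N$ are cosets of $\hat F$, each of size $r$) the value semigroup $\calS(\tilde C)$ has exactly $r$ elements in every degree $n\ge 0$; and by the first assertion the number of elements of $\calS(C)$ in degree $n$ equals the number of distinct $D$--characters carried by the weight--$n$ monomials $x^{I}$. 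Summing $r$ minus this number over all $n\ge 0$ yields the stated formula, completing the proof.
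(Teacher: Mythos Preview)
Your proposal is correct and matches the paper's approach: the proposition is stated with a bare \qed, since it is explicitly framed as a summary of results already established in Sections~\ref{sec:D-curves}--\ref{sec:more}. You supply more detail than the paper does---in particular the standard binomial/toric argument for $\ker\phi=\mathfrak a$, which the paper leaves implicit---but the underlying logic is the same.
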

We will need the following proposition.
\begin{proposition}\label{prop:M} The inclusion map $D\subset G$ gives
  a surjection $\hat{G}\twoheadrightarrow \hat{D}$, whose kernel is the
  cyclic group generated by a distinguished character $\hat Q$, whose
  weight is the order of $H$, i.e. $|D|/|F|$.  Its image in $\Z
  \oplus \hat{D}$ is $(|H|,0)$.
\end{proposition}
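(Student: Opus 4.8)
The plan is to read off everything formally from the exact sequences \eqref{eq:exact1}, \eqref{eq:exact2}, \eqref{eq:exact3} and the embedding $\hat G\subset\Z\oplus\hat D$ they produce. First I would observe that the inclusion $D\subset G$ is the composite $D\to\C^*\times D\to G$, $d\mapsto(1,d)$, so its dual is obtained by composing $\hat G\hookrightarrow\Z\oplus\hat D$ with the projection onto $\hat D$; in particular the map $\hat G\to\hat D$ carries the generator $\bar\chi_k$ of $\hat G$ to $\chi_k$. Surjectivity is then the assertion that the $\chi_k$ generate $\hat D$, equivalently that $\bigcap_k\ker\chi_k=0$; and an element of that intersection acts trivially on $\C^m$ (the diagonal action through the $\chi_k$), hence trivially on $C\subset\C^m$, hence is trivial because the $D$--action on $C$ is effective.

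For the kernel $K$ of $\hat G\twoheadrightarrow\hat D$ I would use that $K=\widehat{G/D}$. Since $G=\C^*\cdot D$ inside $(\C^*)^m$, the second isomorphism theorem gives $G/D\cong\C^*/(\C^*\cap D)=\C^*/H$, and since $H$ is the cyclic group of $|H|$--th roots of unity in $\C^*$, the $|H|$--th power map descends to an isomorphism $\C^*/H\cong\C^*$. Thus $K\cong\widehat{\C^*}=\Z$ is infinite cyclic, and I would take $\hat Q\in K$ to be the generator corresponding, under this chain of identifications, to the standard generator of $\widehat{\C^*}$, i.e. the pullback of the latter along $G\to G/D\cong\C^*$.

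It remains to read off the invariants of $\hat Q$. Its weight is its restriction to $\C^*\subset G$, which is the standard generator of $\widehat{\C^*}$ precomposed with $\C^*\hookrightarrow G\to G/D\cong\C^*$; that composite is $u\mapsto u^{|H|}$, so $\hat Q$ has weight $|H|$, and $|H|=|D|/|F|$ since $F=D/H$. Because $\hat Q$ is trivial on $D$ by construction, its image under $\hat G\hookrightarrow\Z\oplus\hat D$, which is the pair (restriction to $\C^*$, restriction to $D$), is $(|H|,0)$. I do not expect a genuine obstacle; the only care needed is in keeping straight which of the two restriction maps out of $\hat G$ (to $\C^*$, giving the weight, versus to $D$) is meant at each step, together with the convention in \eqref{eq:exact1}. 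Once the identification $G/D\cong\C^*/H$ is in hand everything else is bookkeeping.
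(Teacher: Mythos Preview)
Your proof is correct. The paper's own argument is a two-line diagram chase: it places the dual of \eqref{eq:exact1} as the top row of a commutative diagram of short exact sequences mapping surjectively onto $0\to\hat D\to\hat D\to 0\to 0$, and then reads off the kernel of $\hat G\twoheadrightarrow\hat D$ from the snake lemma as the kernel of the induced map $\Z\to\hat H$, namely $|H|\Z\subset\Z\subset\Z\oplus\hat D$. You instead compute $G/D$ directly via the second isomorphism theorem as $\C^*/H\cong\C^*$ and then dualize. The two approaches are essentially dual organizations of the same computation---you work on the group side before dualizing, the paper works entirely on the character side---and neither is materially shorter. Your version has the small advantage of making $\hat Q$ explicit as a pullback along $G\to G/D\cong\C^*$, while the paper's diagram makes the assertion about the image $(|H|,0)\in\Z\oplus\hat D$ immediate without a separate verification.
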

\proof This is immediate from the sequence of kernels
  of the  surjection of short exact sequences:
$$\xymatrix{
0\ar[r]& \hat G\ar[r]\ar@{->>}[d] &\Z\oplus \hat D\ar[r]\ar@{->>}[d] &\hat
H\ar[r]\ar@{->>}[d]&0\\
0\ar[r]&\hat D\ar[r]&\hat D\ar[r]&0\ar[r]&0\hbox to 0pt{\hskip 100pt\qed\hss}}$$

\medskip
This proposition can be restated as follows: Suppose $\hat{I}$
represents a character of $G$ for which the image in $\mathbb Z \oplus
\hat{D}$ is of the form $(\ell',0)$; then $\ell'$ is a multiple of
$|D|/|F|$, and more precisely $\hat{I}$ is a multiple of
$\hat Q$.  It will be of special interest to us when $\hat Q$ is a
non-negative character, i.e., $\hat Q\in\calS$. In this case we can
represent $\hat Q$ by a non-negative tuple $Q$, and the monomial $Y^Q$
will play a special r\^ole for us.

\medskip The following Corollary of Proposition
\ref{prop:G} clarifies the relationship
between the value semigroup and weighted homogeneous $D$--curve.
\begin{corollary}\label{cor:classification}
  A weighted homogeneous $D$--curve $C$ is determined, up to
  isomorphism of weighted homogeneous curves, by its value semigroup
  $\calS(C)$.  Its graded ring $R$ is the semigroup ring
  $\C[\calS(C)]$. The $D$--action is determined by the surjection
  $\calS\twoheadrightarrow\hat D$.

  A commutative semigroup $\calS$ and surjection
  $\chi\colon\calS\twoheadrightarrow\hat D$ determines a weighted
  homogeneous $D$--curve if and only if $\calS$ satisfies the
  cancellation law and is not
  a group, and the induced homomorphism of its group of quotients to
  $\hat D$ has infinite cyclic kernel.
\end{corollary}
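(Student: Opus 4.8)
The plan is to deduce everything from Propositions \ref{prop:G} and \ref{prop:M} together with standard facts about semigroup rings, since those results already pin down the structure of a weighted homogeneous $D$--curve. For the direct statement, keep the notation of this section: $x_1,\dots,x_m$ are homogeneous generators of $R$ with $\operatorname{wt}(x_k)=\ell_k$ and $D$--character $\chi_k$, so that $\calS(C)$ is generated by the elements $s_k:=(\ell_k,\chi_k)\in\N\times\hat D$ and $C\subset\C^m$. I would consider the graded, $D$--equivariant surjection $\C[Y_1,\dots,Y_m]\twoheadrightarrow\C[\calS(C)]$, $Y_k\mapsto[s_k]$, giving $Y_k$ weight $\ell_k$ and $D$--character $\chi_k$. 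By Proposition \ref{prop:G} the ideal of $C$ is generated by the binomials $Y^I-Y^{I'}$ with $\hat I=\hat I'$, that is, with $\sum i_ks_k=\sum i'_ks_k$ in $\calS(C)$; but these are exactly the defining toric relations of the semigroup ring, hence a generating set for the kernel of $Y_k\mapsto[s_k]$. Therefore $R\cong\C[\calS(C)]$ as graded $D$--algebras, the grading being the $\N$--component of $\calS(C)$ and the $D$--action being the surjection $\calS(C)\twoheadrightarrow\hat D$; thus $C$ together with its grading and $D$--action is recovered from $\calS(C)$ and $\chi$.

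For necessity in the characterization: if $\calS=\calS(C)$ then it is a subsemigroup of $\hat G\subset\Z\oplus\hat D$, hence cancellative, and its group of quotients is $\hat G$. A homogeneous eigenfunction of weight $0$ on a weighted homogeneous curve is constant, so $\calS\cap(\{0\}\times\hat D)=\{0\}$ and $\calS$ contains elements of positive weight; in particular it is not a group. By Proposition \ref{prop:M} the induced surjection $\hat G\twoheadrightarrow\hat D$ has infinite cyclic kernel $\Z\hat Q$.

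For sufficiency, given such a pair I would let $\hat G$ be the group of quotients, $\bar\chi\colon\hat G\twoheadrightarrow\hat D$ the induced map, and $K=\Z q=\ker\bar\chi$. Since $K$ is torsion free and $\hat G/K\cong\hat D$ is finite, $\hat G$ has rank $1$; let $T$ be its torsion subgroup and $w\colon\hat G\to\hat G/T\cong\Z$ the projection, so that $K\cap T=0$ and $(w,\bar\chi)\colon\hat G\hookrightarrow\Z\oplus\hat D$ is injective. The key claim is that, after fixing the sign of the isomorphism $\hat G/T\cong\Z$, one has $w(\calS)\subset\N$ and $\calS\cap\ker w=\{0\}$. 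Granting this, set $R:=\C[\calS]$, graded by $w$ (so $R_0=\C$), with $D$ acting through the $\hat D$--grading given by $\chi$. Such an $R$ is positively graded, finitely generated (the fibres of $w|_\calS$ lie in the finite set $\hat D$, and $w(\calS)\subset\N$ is a numerical semigroup up to a scale), reduced (a subring of $\C[\hat G]\cong\C[t,t^{-1}]\otimes_\C\C[T]$, which is reduced since $\operatorname{char}\C=0$), and of Krull dimension $\operatorname{rank}\hat G=1$, so $C:=\operatorname{Spec}R$ is a weighted homogeneous curve; $D$ acts effectively because $\chi$ is onto, and it permutes the $r=|T|$ branches transitively because $T\hookrightarrow\hat D$ dualizes to a surjection $D\twoheadrightarrow\hat T$ acting by translation on the components of $\operatorname{Spec}\C[T]$ (this is the analysis of this section run in reverse). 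Finally each monomial $[s]$ is a $D$--eigenfunction of character $\chi(s)$ with $v([s])=w(s)$, so $\calS(C)=\calS$ with the prescribed surjection to $\hat D$, and $R=\C[\calS]$ is its graded ring.

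The one non-formal step, and the main obstacle, is the key claim in the previous paragraph: manufacturing from the algebraic hypotheses a \emph{positive} $\Z$--grading on $\calS$. Here I would argue that if $w$ took both signs on $\calS$ then $w(\calS)$ would be a nonzero subgroup $d\Z$ of $\Z$, and since $\calS\cap\ker w$ is a finite cancellative semigroup, hence a group, a short argument (pair an element of positive weight with one of the opposite weight, land in $\calS\cap\ker w$, and invert there) forces every element of $\calS$ to be invertible, contradicting that $\calS$ is not a group; the same argument gives $\calS\cap\ker w=\{0\}$. This is precisely where the hypothesis that $\calS$ is not a group is used (one really wants, and here extracts, that $\calS$ is a positive semigroup with the only weight-zero element its identity); everything else is Propositions \ref{prop:G}/\ref{prop:M} or a standard property of semigroup rings.
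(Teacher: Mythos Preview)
Your argument for the first paragraph of the corollary is correct and is exactly the paper's approach: both identify $R$ with $\C[\calS(C)]$ by observing, via Proposition \ref{prop:G}, that the defining binomial ideal of $C\subset\C^m$ is precisely the ideal of toric relations among the semigroup generators. The paper's proof of this part is a one-line reference to Proposition \ref{prop:G}; you spell out the same identification. For the second paragraph the paper explicitly declines to give an argument (``We won't use the second part of the corollary so we leave it to the reader''), so you are doing more than the paper here, and your necessity direction is fine.

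In the sufficiency direction there is a genuine gap. You correctly argue that if $w$ took both signs on $\calS$ then $\calS$ would be a group (pair elements of opposite weight to land in the finite cancellative monoid $\calS\cap\ker w$, which is therefore a group, and invert there). But your claim that ``the same argument gives $\calS\cap\ker w=\{0\}$'' is false: take $\hat G=\Z\times\Z/2$, $\hat D=\Z/2$ with $\chi$ the second projection, and $\calS=\N\times\Z/2$. Then $\calS$ is cancellative, not a group, and $\ker(\hat G\to\hat D)=\Z\times\{0\}$ is infinite cyclic, so all stated hypotheses hold; yet $\calS\cap\ker w=\{0\}\times\Z/2$ is nontrivial, $\C[\calS]\cong\C[x]\times\C[x]$, and this is not the local ring of a curve germ. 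In fact your own necessity argument already exhibits the missing condition: you observe that $\calS(C)\cap(\{0\}\times\hat D)=\{0\}$ since weight-zero eigenfunctions are constant, i.e.\ $\calS$ has no nontrivial units. That is strictly stronger than ``$\calS$ is not a group'', and it (or an equivalent formulation such as $\calS\cap(-\calS)=\{0\}$) is what one actually needs to get $R_0=\C$. So the gap lies in a direction the paper neither proves nor uses, and reflects a slight imprecision in the stated hypotheses rather than a flaw in your overall strategy.
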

\begin{proof} The first paragraph is just a reinterpretation of the
  first part of Proposition \ref{prop:G}: The isomorphism
  $\C[\calS(C)]\to R$ is given by $\hat I\mapsto Y^I$. The grading is
  determined by the map $\calS(C)\to \N$ given by mapping $\calS(C)$
  into its group of quotients $\hat G$ and then factoring by the
  torsion subgroup of $\hat G$.  $D$ acts on elements
  $X\in\calS(C)$---i.e., generators of $\C[\calS(C)]$---by
  $g\cdot X=\chi(X)(g)X$.

We won't use the
  second part of the corollary so we leave it to the reader.
\end{proof}

\section{An instructive example}\label{sec:ex}

Let $n_1,\dots,n_m\geq 2$ be integers, and consider the curve
$C\subset \C^m$ defined by
$$Y_1^{n_1}=Y_i^{n_i}\,,\quad i=2,\dots,m.$$
Write \ $\calN=n_1\dots n_m$, \ $\calN_i=\calN/n_i$, \
$s=\operatorname{GCD}(\calN_1,\dots,\calN_m)$.

\begin{example}\label{ex:instr} With the above notation,
\begin{enumerate}
\item $C$ is a reduced weighted homogeneous curve with (relatively
  prime) weights $\calN_1/s,\dots, \calN_m/s$; a particular branch
  $C_1$ is the irreducible monomial curve
  $(u^{\calN_1/s},\dots,u^{\calN_m/s}).$
\item Let $G:= C-\{0\}\subset (\C^*)^m$; $G$ is a subgroup of
  $(\C^*)^m$, acting on $C$ by coordinate-wise multiplication, and
  simply transitively on $C-\{0\}$.
\item  The connected component of the identity of $G$ is
 $C_1-\{0\}$, a copy of $\C^*$.
\item $G/\C^*$ is a finite abelian group whose order $r$ (the number
  of components of $C$) equals $s$ (the GCD above).
\item There is a (non-canonical) splitting $G=\C^*\times D'$, so that
  $C$ is a $D'$--curve, and $D'$ acts simply transitively on the set of
  branches.
\item The Milnor number $\mu$ and delta-invariant $\delta$ of $C$
  satisfy
$$\mu -1=2\delta -r=(m-1)\calN\ -\sum_{i=1}^{m}\calN_{i}.$$
\end{enumerate}
\end{example}
\begin{proof} Statements (1)--(3) and (5) are obvious.  We show (4).
  For any $a,a'\neq 0$, one has that the cardinality $|C\cap\{Y_1
  =a\}|=n_2\dots n_m =\calN_1$, while $|C_1 \cap\{Y_1 =a'\}|$ is the
  weight $\calN_1 /s$ of $Y_1$.  Since $G$ acts transitively on
  $C-\{0\}$, it follows that any branch intersects $\{Y_1=a'\} $ in
  $\calN_1/s$ points, so that there must be $s$ branches.

  An alternative proof of (4) that does slightly more is as
  follows. Since $\C^*$ is a divisible group, one has a splitting
  $G=\C^*\times D'$, where $D'\subset G$ is a finite subgroup mapping
  isomorphically onto $G/\C^*$.  Then $C$ is a $D'$--curve, and $D'$
  acts simply transitively on the set of branches.  As in Proposition
  (\ref{prop:G}), $D'$ is isomorphic to the torsion subgroup of the
  group with generators $e_1,\dots,e_m$, and relations $n_1e_1=n_ie_i
  , i=2,\dots, m.$  This implies again that $s$ is the number of
  branches, and also allows calculation of the elementary divisors of
  $D'$ via the ideals of minors of the matrix associated to the
  relations.

  The familiar formula \cite{buchweitz-greuel} relating Milnor number
  and delta invariant ($\mu =2\delta-r+1$), plus the calculation of
  $\mu$ in e.g., \cite{gh} (or see Lemma \ref{le:chi of fiber}),
  yields the last assertion.
 \end{proof}

\section{$D$--curves from rooted resolution diagrams}
\label{sec:curves from diagrams}

Let $\Gamma$ be a resolution diagram which is a tree, with
distinguished leaf $*$, called the \emph{root}; we say $(\Gamma, *)$
is a \emph{rooted tree}.  These data will give rise to $C(\Gamma,*)$,
a reduced and weighted homogeneous $D$--curve, where $D := D(\Gamma)$
is the discriminant group associated to $\Gamma$ (i.e., the cokernel
of the intersection matrix of $\Gamma$).  We derive key properties of
$C$ by its inductive relationship with subtrees (with fewer nodes).

For the moment, assume $\Gamma$ is not a string, i.e., has at least
one vertex of valency $\geq 3$\comment{WDN: if \emph{really} isn't
  necessary, but you wanted this case treated separately.} (this is
not strictly necessary).  Order the non-root leaves
$w_{1},\dots,w_{m}$, and index them by variables $Y_{1},\dots,Y_{m}$.
The $m$ linking numbers (Definition \ref{def:linking}) $\ell_{i}:=
\ell_{w_{i},*}$ from the non-root leaves to $*$ give a set of
(non-reduced) weights for the variables $Y_i $; one gets reduced
weights by dividing by
\begin{equation}
  \label{eq:s}
  s:=\operatorname{GCD}(\ell_1,\dots,\ell_m).
\end{equation}
These weights give a copy of $\C^*$ in the diagonal group
$(\C^*)^{m}$.  (The linking numbers, and hence $s$, can be read off
from the splice diagram $\Delta$ associated to $\Gamma$.)

As described in subsection \ref{subsec:action},
every vertex $v$ of $\Gamma$ gives an element $e_{v}\in D$, and $D$ is
generated by the elements $e_{w_{i}}$, $i=1,\dots,m$.  Moreover, $D$
has a natural non-degenerate $\Q /\Z$--valued bilinear form
$(e,e')\mapsto e\cdot e'$, induced by the intersection pairing for
$\Gamma$, and
one has an embedding of $D$ into
$(\C^*)^m$, where the image of $e\in D$ is the $m$--tuple whose $j$th
entry is the root of unity $e^{2\pi i \,e\cdot e_{w_{j}}}$.
Recall
from subsection \ref{subsec:action} that for vertices $w,w'$ of
$\Gamma$, one has
\begin{equation}
  \label{eq:1}
 e_{w} \cdot e_{w'}=-\ell_{ww'}/ |D|\,,
\end{equation}
which for nodes and leaves can be read off from $|D|$ and the splice diagram
$\Delta$ with leaf weights.

As before, define $G\subset (\C^*)^{m}$ to be the subgroup generated
by $D$ and the $\C^*$.

\begin{definition}\label{def:61} The $D$--curve $C:= C(\Gamma,*)$ associated to
  the rooted diagram $(\Gamma,*)$ is the closure of $G$ in $\C^m$, or
  equivalently the closure in $\C^m$ of the $G$--orbit of
  $(1,1,\dots,1)$.
\end{definition}

\comment{WDN: reorganized these two paragraphs and moved equation (7)
  to before the Lemma.}%
From the natural map $\Z^m\rightarrow \hat{G}$, any element of
$\hat{G}$ may be represented as before by $\hat{I}=\sum
_{j=1}^{m}i_j\bar\chi_j$, where this
character gives the weight $\sum_{j=1}^{m}i_{j}\ell_{w_{j}}/s$, and
sends an element $e\in D$ to
$\prod_{j=1}^{m}e^{2\pi i\,i_{j}\,e\cdot e_{w_{j}}}$.  If
$I=(i_1,\dots,i_m)$ is non-negative $\hat I$ represents the weight and
$D$--character of the monomial $Y^I$.

The ideal of $C$ in $\C[Y_1,\dots,Y_m]$ is generated by $Y^I-Y^{I'}$
for all non-negative $m$--tuples $I$ and $I'$ with the same weight and
$D$--character (i.e., giving the same image in $\hat{G}$).  Denote by
$r$ the index in $G$ of the stabilizer $H=\C^*\cap D$ of one branch of
$C$.  Thus $r=|G/\C^*|$ is the number of branches of $C$.  We shall
prove later that $r=s$ ($s$ as in \eqref{eq:s}), hence also this
number can
be read off the splice diagram.

The above applies also when $\Gamma$ is a
string, corresponding to a cyclic quotient singularity of some order
$|D|$. Suppose there are two leaves, $*$ and $w$.  Then $D$ is the
cyclic group generated by $e_{w}$, acting on $\C$ by $e^{2\pi i\,
  e_w\cdot e_w}=e^{-2\pi i\ell_{ww}/|D|}$.
There is a single variable $Y$, with weight $\ell_{w,*}=1$; we have
$G=\C^*$, $D\subset G$ the cyclic subgroup of order $|D|$.  The
associated $D$--curve is simply a copy of $\C$ (and so the semigroup
is $\N$).  When there is only the one vertex $*$, it plays the role of
both leaf $w$ and root $*$, and all is the same, with $\ell_{w,*}=1$
and $D$ trivial.

\medskip
For general $(\Gamma,*)$ our goal is to do an inductive comparison
between the data of $(\Gamma,*)$ and data from rooted
sub-diagrams.  We start with the case of one node.

\begin{proposition}\label{pr:one} Consider a minimal resolution graph
  $\Gamma$ with one node,
given by the diagram
$$\xymatrix@R=4pt@C=24pt@M=0pt@W=0pt@H=0pt{\\
\lefttag{\Circ}{n_2/p_2}{8pt}\dashto[ddrr]&
&\hbox to 0pt{\hss\lower 4pt\hbox{.}.\,\raise3pt\hbox{.}\hss}
&\hbox to 0pt{\hss\raise15pt
\hbox{.}\,\,\raise15.7pt\hbox{.}\,\,\raise15pt\hbox{.}\hss}
&\hbox to 0pt{\hss\raise 3pt\hbox{.}\,.\lower4pt\hbox{.}\hss}
&&\righttag{\Circ}{n_{m}/p_{m}}{8pt}\dashto[ddll]\\
\\
&&\Circ\lineto[dr]&&\Circ\lineto[dl]\\
\lefttag{\Circ}{n_1/p_1}{8pt}\dashto[rr]&&
\Circ\lineto[r]&\overtag{\Circ}{-b}{8pt}\lineto[r]&\Circ
\dashto[rr]&&\righttag{\Circ}{n_{m+1}/p_{m+1}}{8pt}\\&~\\&~}
$$
Strings of $\Gamma$ are described by the continued fractions shown,
starting from the node.  Let $*$ denote the leaf on the lower right,
$C$ the associated $D$--curve, with discriminant group $D$ of order
$|D|=n_1\dots n_{m+1}(b-\sum_{i=1}^{m+1}p_{i}/n_{i})$.  Then $C$ and
the group $G$ are as in Example \ref{ex:instr}, and depend only on
$n_1,\dots,n_m$.  In particular, $r=s$, i.e., the number of branches
is the GCD of the linking numbers to $*$.
\end{proposition}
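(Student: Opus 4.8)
The plan is to identify $C=C(\Gamma,*)$ with the curve of Example~\ref{ex:instr} built from $n_1,\dots,n_m$ (the continued–fraction numerators of the $m$ legs not containing $*$); write $\calN=n_1\cdots n_m$ and $\calN_i=\calN/n_i$. The only real inputs will be Propositions~\ref{prop:G} and~\ref{prop:M}. First I would compute the reduced weights of $C$. Since $\Gamma$ has the single node $v$, the splice diagram $\Delta$ is a star with centre $v$ and $m+1$ leaves, and the splice weight on the edge from $v$ toward the leaf $w_i$ is the discriminant of the leg $n_i/p_i$, namely $n_i$. The shortest path from $w_i$ to $*$ is $w_i$–$v$–$*$, so by Definition~\ref{def:linking}
\[
\ell_i=\ell_{w_i,*}=\prod_{j\le m,\ j\ne i}n_j=\calN_i .
\]
Hence $s=\gcd(\ell_1,\dots,\ell_m)=\gcd(\calN_1,\dots,\calN_m)$, the reduced $*$–weight of $Y_i$ is $\calN_i/s$, and the copy of $\C^*$ in $(\C^*)^m$ coincides with the one in Example~\ref{ex:instr}. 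Write $C_0\subset\C^m$ for the curve $\{Y_1^{n_1}=Y_i^{n_i}:i=2,\dots,m\}$ of that Example and $G_0=C_0\setminus\{0\}$.

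Next I would show $C\subseteq C_0$. The key algebraic fact is that $n_ie_{w_i}=e_v$ in $D$ for every leaf $w_i$ (writing $w_{m+1}:=*$): walking the intersection relations up the leg $n_i/p_i$ from the leaf, the vertex $u$ adjacent to $v$ satisfies $e_u=p_ie_{w_i}$ (the numerator of $[b_2,\dots,b_k]$ being $p_i$), and the relation at $u$ then gives $e_v=n_ie_{w_i}$. Consequently $n_1\bar\chi_1=n_i\bar\chi_i$ in $\hat G$, since both have weight $\calN/s$ and both restrict to $n_1e_{w_1}=n_ie_{w_i}\in D$; by Proposition~\ref{prop:G} the binomials $Y_1^{n_1}-Y_i^{n_i}$ lie in the ideal of $C$, so $C\subseteq C_0$ and $G\subseteq G_0$. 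As $\C^*\subseteq G$ is the common identity component, $G$ is a union of some of the $s$ branches of $C_0$, so $r\le s$.

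For the reverse inequality I would apply Proposition~\ref{prop:M}: the surjection $\hat G\twoheadrightarrow\hat D$ has kernel the infinite cyclic group generated by $\hat Q$, and the weight of $\hat Q$ is $q:=|D|/|F|=|D|/r$. This kernel is the image in $\hat G$ of $L:=\{(b_j)\in\Z^m:\textstyle\sum_j b_je_{w_j}=0\text{ in }D\}$, and the image of $(b_j)\in L$ has weight $\sum_j b_j\calN_j/s$, so $q=\tfrac1s\gcd\{\sum_j b_j\calN_j:(b_j)\in L\}$. On the one hand, for $(b_j)\in L$, pairing with $e_*$ and using $e_{w_j}\cdot e_*=-\calN_j/|D|$ gives $\tfrac1{|D|}\sum_j b_j\calN_j=-(\sum_j b_je_{w_j})\cdot e_*=0$ in $\Q/\Z$, so $|D|\mid\sum_j b_j\calN_j$ and $sq\ge|D|$. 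On the other hand, the node relation $-b\,e_v+\sum_{i=1}^{m+1}e_{u^{(i)}}=0$, with $e_{u^{(i)}}=p_ie_{w_i}$ and $n_{m+1}e_*=e_v=n_1e_{w_1}$, yields after clearing $e_*$ an explicit $(b_j)\in L$ with $\sum_j b_j\calN_j=-n_1\cdots n_{m+1}\bigl(b-\sum_i p_i/n_i\bigr)=-|D|$, so $sq\le|D|$. Hence $q=|D|/s$ and $r=|D|/q=s$. Combined with $C\subseteq C_0$ this forces $G=G_0$ (two subgroups of $(\C^*)^m$ with identity component $\C^*$ and the same number $s$ of components) and hence $C=C_0$, which manifestly depends only on $n_1,\dots,n_m$; in particular $r=s$.

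The weight computation and the binomial membership are routine; the genuine work is in the last step—writing down the relation $(b_j)$ produced by the node and verifying $\sum_j b_j\calN_j=-|D|$. I expect this to be the main obstacle, though it is really bookkeeping with the continued–fraction data of the legs (the identity $e_{u^{(i)}}=p_ie_{w_i}$ and elimination of $e_*$), and it is the one place where the formula $|D|=n_1\cdots n_{m+1}(b-\sum p_i/n_i)$ enters. The string case and the single–vertex case having been treated just before the statement, one may assume the node $v$ is present throughout.
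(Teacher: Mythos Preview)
Your argument is correct and complete: the weight computation $\ell_i=\calN_i$, the identity $n_ie_{w_i}=e_v$ in $D$ (hence $Y_1^{n_1}-Y_i^{n_i}$ in the ideal of $C$, giving $G\subseteq G_0$), and the two-sided bound on $sq$ via Proposition~\ref{prop:M} all check out. The explicit relation you extract from the node, after multiplying through by $n_{m+1}$ to eliminate $e_*$, does give $\sum_j b_j\calN_j=-|D|$, exactly as you predicted.

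Your route is genuinely different from the paper's. The paper does not compute with $\hat G$ at all; instead it invokes \cite{neumann83} to realize a weighted homogeneous singularity with graph $\Gamma$ as a splice quotient, with universal abelian cover the Brieskorn complete intersection $Y_1^{n_1}-Y_i^{n_i}+a_iY_{m+1}^{n_{m+1}}=0$. The hypersurface $\{Y_{m+1}=0\}$ is then literally the curve of Example~\ref{ex:instr}, and one checks that $C(\Gamma,*)$ coincides with it because $D$ acts transitively on its branches. This is short and geometric but imports a nontrivial external result. Your argument is longer but entirely self-contained within the machinery of Sections~\ref{sec:more}--\ref{sec:curves from diagrams}; it also makes transparent exactly where the discriminant formula $|D|=n_1\cdots n_{m+1}(b-\sum p_i/n_i)$ enters, and it exercises Proposition~\ref{prop:M} in a way that foreshadows its later use in the inductive step.
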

\begin{proof} It is shown in \cite{neumann83} that a weighted
  homogeneous surface singularity with resolution graph $\Gamma$ is a
  splice quotient, with \UAC{} defined by
     \begin{gather*}
Y_1^{n_1}-Y_2^{n_2}+a_2Y_{m+1}^{n_{m+1}}=0\\
...\qquad...\qquad...\\
     Y_1^{n_1}-Y_m^{n_m}+a_mY_{m+1}^{n_{m+1}}=0
\end{gather*}
Here, the $a_i$ are distinct and non-zero, giving the analytic type of
the $m+1$ intersection points on the central curve.  $D(\Gamma)$ acts
on the coordinates $Y_i$ according to the usual characters, and acts
freely off the origin.  The curve cut out by $Y_{m+1}=0$ is the one in
Example \ref{ex:instr}; its quotient by $D$ is irreducible (see also
\cite[Theorem 7.2(6)]{nw1}), so $D$ acts transitively on the branches.
By definition, the curve $C(\Gamma,*)$ in $\C^m$ consists of the
monomial curve with weights $\calN _i/s$ (which is a component of
$\{Y_{m+1}=0\}$) and its translates by $D$.  It follows that
$C(\Gamma,*)$ is equal to $\{Y_{m+1}=0\}$.
\end{proof}

\begin{remark} Note that in this example, the rooted splice
  diagram

$$
\splicediag{10}{36}{
  \Circ&&\Circ\\ \\
  &\vbox to 0 pt{\vss\hbox{\raise6pt\hbox{.}\;\raise9pt\hbox{.}%
  \;\raise10pt\hbox{.}\;\raise9pt\hbox{.}\;\raise6pt\hbox{.}\hss}}
\\
  &\Circ\lineto[dl]^(.3){n_1}\lineto[uuul]^(.35){n_2}
  \lineto[uuur]_(.35){n_m}\lineto[dr]_(.35){*}\\
  \Circ&&\Circ\\~}$$
uniquely determines both the curve $C$ and the group
$G$, but \emph{not} the group $D$.\comment{JMW--couldn't the diagram
  look prettier? WDN is it better now?}
\end{remark}

We return to the inductive comparison of the data of $(\Gamma,*)$ and
data from rooted sub-diagrams. Before starting, if any two
nodes in $\Gamma$ are adjacent, we blow up $\Gamma$ once in between
these,  so that $\Gamma$ has at least one vertex between any two adjacent
nodes (cf.\ Definition 6.1 of \cite{nw1}). Suppose $*$ is the leftmost
leaf in the resolution diagram\comment{JMW--would a picture
  help, as in the Appendix to \cite{nw1}? WDN This one OK?}
$$\splicediag{8}{30}{
  &&&&&\frame{\Gamma_1}\\
  \Gamma=&\undertag{\overtag\Circ{-b_0}{8pt}}{*}{4pt}
  \lineto[r]&\overtag\Circ{-b_1}{8pt}\dashto[r]&
\dashto[r]&\undertag{\overtag\Circ{-b_{n+1}\;\;\;}{10pt}}{v^*}{5pt}
  \lineto[ur]\lineto[dr]
  &\Vdots\\
  &&&&&\frame{\Gamma_k} \\~
}$$
 So, remove from $\Gamma$
the leaf $*$, the adjacent node $v^{*}$, and all the vertices in
between. This produces $k$ new rooted resolution diagrams
$(\Gamma_{i},*_{i})$.
The new $*_{i}$ is the vertex of $\Gamma_{i}$ closest to $v^{*}$, and it is
now a leaf.  We shall use notations like $G_{i}$, $m_{i}$, $D_{i}$,
$\Delta_{i}$ (associated splice diagrams), $r_i$ (number of branches),
$s_i$ (GCD of weights), etc., when referring to the new rooted
diagrams.  Note that the total number of non-root leaves is
$m_{1}+\ldots +m_{k}=m$.

In what follows, ``leaf'' shall mean non-root leaf.  As before, we
use notation $\ell_{ww'}$ and $\ell_{w}:= \ell_{w,*}$ for linking
numbers in $\Delta$ (as a splice diagram with leaf weights,
so $\ell_{ww'}$ is also defined if $w=w'$ is a leaf).  When computing
linking numbers in $\Delta_{i}$, we use notation
$\tilde{\ell}_{ww'}$ and $\tilde{\ell_{w}}:=\tilde{\ell}_{w,*_{1}}$.
The weights around the distinguished node $v^{*}$
are $|D_{1}|,\dots,|D_{k}|$ (by definition of the splice diagram) and
$c$, the weight in the direction of $*$.

\begin{lemma}\label{le:link} We have the following relations:
  \begin{enumerate}
  \item\label{linkit:1} For $w$ a leaf in $\Delta_{1}$,
    $$\ell_{w}=\tilde{\ell}_{w}\cdot |D_{2}|\dots
    |D_{k}|=\tilde{\ell_{w}}\cdot \ell_{v^*v^*}/c|D_{1}|.$$
  \item\label{linkit:2} For $w$ a leaf in $\Delta_{1}$, $w'$ a leaf in
    $\Delta_{i}$, $i>1$,
    $$\ell_{v^*v^*}\ell_{ww'}=c^{2}\ell_{w}\ell_{w'}.$$
  \item\label{linkit:3} For $w$, $w'$ leaves in $\Delta_{1}$,
    $$\ell_{ww'}/|D| =\tilde{\ell}_{ww'}/|D_{1}| +
    \tilde{\ell}_{w}\tilde{\ell}_{w'}\ell_{v^*v^*}/|D| |D_{1}|^{2}.$$
  \end{enumerate}
\end{lemma}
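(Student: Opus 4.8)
The plan is to prove all three relations by expressing the linking numbers as products of splice-diagram weights, following Definition \ref{def:linking}, and then reorganizing those products according to how the path in $\Gamma$ passes through the distinguished node $v^*$. The key geometric observation is that for a leaf $w$ in $\Delta_1$, the shortest path from $w$ to $*$ in $\Gamma$ runs $w \rightsquigarrow *_1 \to \cdots \to v^* \to \cdots \to *$, so the splice-diagram weights adjacent to but off this path split into three disjoint groups: those internal to $\Gamma_1$ (which are exactly the weights off the path from $w$ to $*_1$ in $\Delta_1$, namely $\tilde\ell_w$), the weights $|D_2|,\dots,|D_k|$ on the edges departing $v^*$ toward $\Gamma_2,\dots,\Gamma_k$, and the weights on the string between $v^*$ and $*$ that lie off the path — but along the string from $v^*$ to $*$ every valency-$\le 2$ vertex contributes nothing in $\Delta$, so these last contribute trivially. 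This immediately gives the first equality in \eqref{linkit:1}: $\ell_w = \tilde\ell_w\cdot|D_2|\cdots|D_k|$. The second equality in \eqref{linkit:1} follows by rewriting $|D_2|\cdots|D_k|$ using $\ell_{v^*v^*}$: by Definition \ref{def:linking}, $\ell_{v^*v^*}$ is the product of all splice weights at $v^*$, which is $c\cdot|D_1|\cdot|D_2|\cdots|D_k|$, so $|D_2|\cdots|D_k| = \ell_{v^*v^*}/(c|D_1|)$.

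For \eqref{linkit:2}, I would compute $\ell_{ww'}$ directly: the shortest path from $w$ (in $\Gamma_1$) to $w'$ (in $\Gamma_i$) passes through $v^*$, so it is $w\rightsquigarrow *_1\to\cdots\to v^*\to\cdots\to *_i\rightsquigarrow w'$. The weights off this path are: the weights $\tilde\ell_w$ internal to $\Gamma_1$; the weights $\tilde\ell_{w'}$ internal to $\Gamma_i$; the weights $|D_j|$ for $j\ne 1,i$ on the other edges at $v^*$; and the weight $c$ on the edge from $v^*$ toward $*$ (which is now off the path). Hence $\ell_{ww'} = c\cdot\tilde\ell_w\cdot\tilde\ell_{w'}\cdot\prod_{j\ne 1,i}|D_j|$. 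Now using \eqref{linkit:1} for both $w$ and $w'$, we have $\ell_w\ell_{w'} = \tilde\ell_w\tilde\ell_{w'}\cdot(\prod_{j\ne 1}|D_j|)(\prod_{j\ne i}|D_j|)$, and multiplying $\ell_{ww'}$ by $\ell_{v^*v^*} = c|D_1|\cdots|D_k|$ yields exactly $c^2\ell_w\ell_{w'}$ after matching the factors $|D_j|$ on both sides; this is the claimed identity.

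For \eqref{linkit:3}, I would again compute $\ell_{ww'}$ for two leaves $w,w'$ of $\Delta_1$ by comparing the path from $w$ to $w'$ inside $\Gamma$ with the corresponding path inside $\Gamma_1$. Let $u$ be the vertex where the paths $w\to *_1$ and $w'\to *_1$ first meet (the "branch point" of $w,w'$ toward $v^*$). The path from $w$ to $w'$ in $\Gamma$ coincides with that in $\Gamma_1$, so the weights off it that lie inside $\Gamma_1$ contribute $\tilde\ell_{ww'}$; the only additional off-path weights in $\Gamma$ are those encountered going from $u$ out through $*_1$, past $v^*$, and down the string to $*$ — i.e. the weight on the edge from $u$ toward $v^*$, together with $|D_2|,\dots,|D_k|$ and $c$ at $v^*$. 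The standard additivity identity for the inverse intersection matrix (equivalently, $(\ell_{vw}) = -|D|A(\Gamma)^{-1}$ and its restriction to $\Gamma_1$), exactly as in \cite{eisenbud-neumann}, converts this into the stated formula $\ell_{ww'}/|D| = \tilde\ell_{ww'}/|D_1| + \tilde\ell_w\tilde\ell_{w'}\ell_{v^*v^*}/(|D||D_1|^2)$; the cleanest derivation is to note that $\ell_{ww'}/|D| = -e_w\cdot e_{w'}$ is the $(w,w')$ entry of $A(\Gamma)^{-1}$ and to use the block structure of $A(\Gamma)$ relative to the splitting of $\Gamma$ at $v^*$, which expresses $A(\Gamma)^{-1}$ restricted to the leaves of $\Gamma_1$ as $A(\Gamma_1)^{-1}$ plus a rank-one correction proportional to the vector $(\tilde\ell_w)_w$. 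I expect the main obstacle to be bookkeeping the rank-one correction term precisely — getting the normalizing constant $\ell_{v^*v^*}/(|D||D_1|^2)$ right — and it is reassuring that \eqref{linkit:1} and \eqref{linkit:2} can be used to cross-check it (e.g. setting $w = w'$ a leaf, or specializing $w'$ to lie in a different $\Gamma_i$ and comparing with \eqref{linkit:2}).
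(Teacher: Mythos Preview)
Your arguments for \eqref{linkit:1} and \eqref{linkit:2} are correct and are exactly what the paper means by ``straightforward.''

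For \eqref{linkit:3}, your first paragraph (the direct weight-counting with the branch vertex $u$) is muddled. The path from $w$ to $w'$ lies entirely inside $\Gamma_1$, so $v^*$ is \emph{not} on that path; hence none of the weights $c,|D_2|,\dots,|D_k|$ at $v^*$ are ``adjacent to but not on'' the path, and your list of ``additional off-path weights'' is wrong. The only place $\ell_{ww'}$ and $\tilde\ell_{ww'}$ actually differ is in the single off-path weight at $u$ on the edge pointing toward $v^*$: in $\Gamma$ this weight is $a$ (the discriminant of the whole subgraph beyond $u$ in that direction), while in $\Gamma_1$ it is $\tilde a$ (the discriminant of only the $\Gamma_1$--part). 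So $\ell_{ww'}$ does not split as $\tilde\ell_{ww'}$ times something; rather, after cancelling common factors the identity becomes $a|D_1|-\tilde a|D| = d_wd_{w'}c(\ell'_{uv^*})^2\ell_{v^*v^*}/|D_1|$, a determinant identity relating the two edge-weights $a,\tilde a$. This is precisely how the paper proceeds, quoting this last identity from Lemma~12.7 of \cite{nw1}.

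Your second suggestion --- use the block decomposition of $A(\Gamma)$ at the edge leaving $\Gamma_1$ and apply Sherman--Morrison to get the top-left block of $A(\Gamma)^{-1}$ as $A(\Gamma_1)^{-1}$ plus a rank-one correction in the vector $(\tilde\ell_w)_w$ --- is a genuinely different and perfectly valid route. It has the advantage of being self-contained (no appeal to \cite{nw1}); the only work left, as you anticipated, is tracking the scalar in front of the rank-one term, which amounts to computing the relevant diagonal entry of the Schur complement. The paper's route trades that computation for the citation.
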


\begin{proof}
  The first two claims are straightforward. For the third, we first
  need some notation. For any two vertices $v$ and $v'$, recall
  $\ell_{vv'}$ is the product of splice diagram weights adjacent to
  the path from $v$ to $v'$; we will denote by $\ell'_{vv'}$ the product
  of splice diagram weights adjacent to the path from $v$ to $v'$, but
  excluding weights at $v$ and $v'$ (so $\ell'_{vv'}=1$ if $v=v'$).\comment{JMW--suppose $v=v'$? WDN
    an empty product evaluates to 1}

Let now $v$ be the vertex of $\Gamma$ that is closest to $v^*$ on the
path from $w$ to $w'$ (this is $w$ itself if $w=w'$). Rewrite the
equation to be proved first as
 $$\ell_{ww'}|D_1| -\tilde{\ell}_{ww'}|D| =
 \tilde{\ell}_{w}\tilde{\ell}_{w'}\ell_{v^*v^*}/|D_{1}|\,.$$ If $a$,
 $d_w$, $d_{w'}$ are the weights in $\Delta$ at $v$ towards $v^*$,
 $w$, $w'$ respectively, $R$ the product of the remaining weights
 at $v$, and $\tilde a$ the weight in $\Delta_1$ at $v$ towards
 $v^*$, then we can rewrite our equation
 $$\ell'_{wv}\ell'_{w'v}aR|D_1| -\ell'_{wv}\ell'_{w'v}\tilde aR|D| =
 (\ell'_{wv}d_{w'}R\ell'_{vv*})(\ell'_{w'v}d_{w}R\ell'_{vv*})
 \ell_{v^*v^*}/|D_{1}|\,.$$
Cancelling common factors from this equation simplifies it to
$$a|D_1|-\tilde a|D|=
d_wd_{w'}c(\ell'_{vv^*})^2\ell_{v^*v^*}/|D_1|\,.$$
This equation is  what was proved in Lemma
12.7 of \cite{nw1} (there $v$ was a node, but that was not necessary
to the proof)\comment{JMW--make sure I check this! WDN You can trust me.}.
\end{proof}

We state the key results needed to do induction.

\begin{theorem}\label{th:compat} Enumerate the $m$ non-root leaves
  of $\Gamma$ so that the first $m_{1}$ of them yield exactly the
  non-root leaves of $\Gamma_{1}$.  Compose the inclusion $G\subset
  (\C^*)^{m}$ with the projection onto $(\C^*)^{m_{1}}$ given by the
  first $m_{1}$ entries.  Then the image is exactly $G_{1}$.
\end{theorem}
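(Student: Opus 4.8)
The plan is to analyze the map $G \to (\C^*)^{m_1}$ by passing through $\C^*$ and $D$ separately, and to compare with the inductive sequences for $G_1$. First I would set up the coordinate bookkeeping: the first $m_1$ entries of $(\C^*)^m$ correspond to the non-root leaves $w_1,\dots,w_{m_1}$ of $\Gamma_1$, and the task is to show that projecting $G = \langle \C^*, D\rangle \subset (\C^*)^m$ onto these entries gives exactly $G_1 = \langle \C^*, D_1 \rangle \subset (\C^*)^{m_1}$. Since $G$ is generated by the image of $\C^*$ (via the weights $\ell_{w_j}/s$) and the image of $D$ (via $e \mapsto (e^{2\pi i\, e\cdot e_{w_j}})_j$), it suffices to understand the images of these two subgroups under projection and check they together generate $G_1$.

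The $\C^*$-part is controlled by part \eqref{linkit:1} of Lemma \ref{le:link}: the weights $\ell_{w}$ for $w$ a leaf of $\Delta_1$ are the weights $\tilde\ell_w$ rescaled by the constant factor $|D_2|\cdots|D_k|$. Hence the image of $\C^*\subset G$ in the first $m_1$ coordinates is the same one-parameter subgroup as the $\C^*\subset G_1$ coming from the reduced weights $\tilde\ell_w/s_1$ — the common rescaling factor only reparametrizes $\C^*$, it does not change the subgroup. The $D$-part is the real content: I must show that projecting the image of $D$ in $(\C^*)^m$ to the first $m_1$ coordinates lands inside $G_1$ and that, together with the $\C^*$ just identified, it generates all of $G_1$. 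The natural approach is to compare the pairing values $e_w \cdot e_{w'}$ for $w,w'$ leaves of $\Delta_1$, computed in $\Gamma$, with the corresponding pairing values $\tilde e_w \cdot \tilde e_{w'}$ computed in $\Gamma_1$; part \eqref{linkit:3} of Lemma \ref{le:link} gives exactly this: $\ell_{ww'}/|D| = \tilde\ell_{ww'}/|D_1| + \tilde\ell_w\tilde\ell_{w'}\ell_{v^*v^*}/|D||D_1|^2$, i.e. the $\Gamma$-pairing restricted to the $\Gamma_1$-leaves equals the $\Gamma_1$-pairing plus a rank-one correction proportional to $\tilde\ell_w\tilde\ell_{w'}$. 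Exponentiating, the restricted $D$-character $(e^{2\pi i\, e\cdot e_{w_j}})_{j\le m_1}$ differs from the corresponding $D_1$-type character only by a factor of the form $(\zeta^{\tilde\ell_{w_j}})_j$ for some root of unity $\zeta$ — and that correction factor lies in the image of $\C^*$ already identified above. Therefore the projected image of $D$ is contained in $G_1 = \langle \C^*, D_1\rangle$, and conversely, since the $e_{w_i}$ with $i\le m_1$ surject onto $D_1$ (after the rank-one correction is absorbed into $\C^*$), the projected image of $G$ contains $D_1$ and the right $\C^*$, hence all of $G_1$.

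I expect the main obstacle to be the second containment, $G_1 \subseteq$ (image of $G$): one must check that the rank-one correction term in \eqref{linkit:3} is genuinely realized inside the image of $\C^*$ and not just formally, which requires knowing that the correction factor $\zeta^{\tilde\ell_{w_j}}$ appearing across the $j$ is consistently a power of the generator of the projected $\C^*$ — this is where the precise value $\ell_{v^*v^*}/|D||D_1|^2$ and the fact that $\tilde\ell_w$ is proportional to $\ell_w$ (again \eqref{linkit:1}) must be combined carefully. A clean way to organize this is to work additively in the character groups: one has the natural inclusion $\hat G \subset \Z\oplus\hat D$ and likewise $\hat G_1 \subset \Z\oplus\hat D_1$, and the projection $G\to(\C^*)^{m_1}$ dualizes to a map on character lattices; Lemma \ref{le:link} then says precisely that this dual map carries the generators $\bar\chi_j$ ($j\le m_1$) of $\hat G$ to the generators of $\hat G_1$, up to the weight-shift built into the reduced-weight normalization. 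Establishing that the generators map to generators — surjectivity on character groups — is the crux, and it follows from the fact that the $\tilde e_{w_i}$ generate $D_1$ together with the fact (which one reads from the continued-fraction structure at $v^*$, or cites from the computation underlying Lemma \ref{le:link}) that no extra relations are introduced by the restriction. Once surjectivity on characters is in hand, dualizing back gives the injection-onto-image statement, i.e. the projected image of $G$ is exactly $G_1$.
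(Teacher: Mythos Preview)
Your proposal is correct and follows essentially the same route as the paper: separate $G$ into its $\C^*$ and $D$ constituents, use Lemma~\ref{le:link}\eqref{linkit:1} to identify the projected $\C^*$ with the $\C^*$ in $G_1$, and use Lemma~\ref{le:link}\eqref{linkit:3} to show that $\pi(e_w)\tilde e_w^{-1}$ lies in that $\C^*$, which simultaneously gives both containments since the $\tilde e_w$ generate $D_1$. Your closing excursion into dual character groups is unnecessary---the paper dispatches the ``second containment'' directly from the identity $\pi(e_w)=\tilde e_w\cdot(\text{element of }\C^*)$, and you already had this in hand before that paragraph.
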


\begin{proof}
  We show the image of the projection map $\pi:G\subset (\C^*)^{m}
  \rightarrow (\C^*)^{m_{1}}$ is exactly $G_{1}$.  Now, an inclusion
  $\C ^{*}\subset \C ^{*N}$ is described by a projective $N$--tuple of
  rational numbers:
  $$[\alpha_{1}:\alpha_2:\dots:\alpha_{N}]\longleftrightarrow
  \{(u^{c\alpha_{1}},\dots,u^{c\alpha_{N}})~|~u\in \C^*\}$$ where $c$
  is any common denominator for the $\alpha_{i}$'s.  Then Lemma
  \ref{le:link} \eqref{linkit:1} implies that the image of the $\C
  ^{*}$ in $G$ is exactly the $\C^*$ in $G_{1}$, since the weights
  differ by a fixed multiple.

  One must still show that image of $D$ is in $G_1$, and (modulo
  $\C^*$) all of $D_1$ is in the image.  So, choose a leaf $w$ in
  $\Gamma_{1}$, and consider corresponding elements in the respective
  discriminant groups, i.e., $e_{w}\in D$ and $\tilde{e}_{w}\in
  D_{1}$.  The following result suffices to complete the proof of
  Theorem \ref{th:compat}.
\end{proof}
\begin{lemma} $\pi(e_{w})\tilde{e}_{w}^{-1} \in \C^*\subset G_{1}$.
\end{lemma}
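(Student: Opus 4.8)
The plan is to compare the two characters directly as elements of $(\C^*)^{m_1}$, equivalently to show that the $m_1$ entries of $\pi(e_w)$ and $\tilde e_w$ agree up to a common scaling by a single root of unity lying in the $\C^*\subset G_1$ coming from the weights. Fix another leaf $w'$ of $\Gamma_1$ and index the coordinate of $(\C^*)^{m_1}$ by $w'$. The $w'$-entry of $\pi(e_w)$ is $e^{2\pi i\,e_w\cdot e_{w'}}=e^{-2\pi i\,\ell_{ww'}/|D|}$, computed in $\Gamma$, while the $w'$-entry of $\tilde e_w$ is $e^{2\pi i\,\tilde e_w\cdot\tilde e_{w'}}=e^{-2\pi i\,\tilde\ell_{ww'}/|D_1|}$, computed in $\Gamma_1$. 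So I must show that the difference of exponents
$$\frac{\ell_{ww'}}{|D|}-\frac{\tilde\ell_{ww'}}{|D_1|}$$
is independent of $w'$, up to adding the $w'$-weight of the $\C^*$-parameter times a fixed rational. But this is exactly what Lemma \ref{le:link}\eqref{linkit:3} gives: that difference equals $\tilde\ell_w\tilde\ell_{w'}\ell_{v^*v^*}/|D||D_1|^2$, and by Lemma \ref{le:link}\eqref{linkit:1} the factor $\tilde\ell_{w'}$ is (a fixed multiple of) the reduced $*_1$-weight of the coordinate $Y_{w'}$ in $\Gamma_1$. Hence the vector of exponent-differences, as $w'$ ranges over the leaves of $\Gamma_1$, is a fixed scalar (depending on $w$ but not $w'$) times the weight vector defining the $\C^*\subset G_1$; exponentiating, $\pi(e_w)\tilde e_w^{-1}$ is the corresponding point of that $\C^*$.

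Concretely, the steps are: (i) unwind the definitions of the embeddings $D\hookrightarrow(\C^*)^m$ and $D_1\hookrightarrow(\C^*)^{m_1}$ via the bilinear form, writing the $w'$-entries of $\pi(e_w)$ and $\tilde e_w$ as $e^{-2\pi i\ell_{ww'}/|D|}$ and $e^{-2\pi i\tilde\ell_{ww'}/|D_1|}$ respectively, using \eqref{eq:1}; (ii) subtract, invoke Lemma \ref{le:link}\eqref{linkit:3} to get the clean expression $\tilde\ell_w\tilde\ell_{w'}\ell_{v^*v^*}/|D||D_1|^2$ for the $w'$-entry of the exponent of $\pi(e_w)\tilde e_w^{-1}$; (iii) recognize, via Lemma \ref{le:link}\eqref{linkit:1}, that $\tilde\ell_{w'}=\ell_{w'}/(|D_2|\cdots|D_k|)$, so the dependence on $w'$ is precisely through $\ell_{w'}$, i.e.\ through the (non-reduced) $*$-weight of $Y_{w'}$, which up to the overall factor $s_1$ is the reduced $*_1$-weight defining the diagonal $\C^*\subset G_1$; (iv) conclude that $\pi(e_w)\tilde e_w^{-1}=(u^{\ell_{w_1'}/s_1},\dots,u^{\ell_{w_{m_1}'}/s_1})$ for a suitable $u\in\C^*$ (a root of unity of order dividing $|D|$), which is by definition an element of the $\C^*$-subgroup of $G_1$.

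I expect the only real subtlety to be the careful bookkeeping in step (iii): matching the rational number $\tilde\ell_w\,\ell_{v^*v^*}/|D||D_1|^2$ (the ``scalar'' part, absorbed into $u$) against the weights $\tilde\ell_{w'}$ (the ``vector'' part), and checking that the common denominators work out so that $u$ is a genuine element of $\C^*$ with the two embeddings of $\C^*$ into $G$ and $G_1$ compatibly identified — but this compatibility was already established in the first paragraph of the proof of Theorem \ref{th:compat} (the image of $\C^*\subset G$ is the $\C^*\subset G_1$, the weights differing by the fixed multiple $|D_2|\cdots|D_k|/$GCD), so it is essentially a matter of being consistent with that normalization. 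Everything else is a direct substitution from Lemma \ref{le:link}. Once the Lemma is proved, Theorem \ref{th:compat} follows immediately: $\pi(D)\subset G_1$ by the Lemma, $\pi(\C^*)=\C^*\subset G_1$ from the first paragraph, so $\pi(G)\subseteq G_1$; and since the $\tilde e_w$ together with $\C^*$ generate $G_1$ and each $\tilde e_w$ is hit modulo $\C^*$, $\pi(G)=G_1$.
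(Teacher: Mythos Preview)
Your proposal is correct and follows essentially the same route as the paper: compute the $w'$-entry of $\pi(e_w)\tilde e_w^{-1}$ via \eqref{eq:1}, apply Lemma~\ref{le:link}\eqref{linkit:3} to get $\tilde\ell_w\tilde\ell_{w'}\ell_{v^*v^*}/|D||D_1|^2$, and observe that the $w'$-dependence is exactly through the weight vector defining $\C^*\subset G_1$. The paper is slightly more direct than your step~(iii): since $\tilde\ell_{w'}$ is already (up to the factor $s_1$) the reduced $*_1$-weight defining $\C^*\subset G_1$, there is no need to pass through $\ell_{w'}$ via Lemma~\ref{le:link}\eqref{linkit:1}; the paper simply writes the $w'$-entry as $u^{\tilde\ell_{w'}}$ with $u=e^{-2\pi i\,\tilde\ell_w\ell_{v^*v^*}/|D||D_1|^2}$ and is done.
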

\begin{proof}
The left hand side is an element in $(\C^*)^{m_{1}}$,
  and we will show that the entry in the slot corresponding to a leaf
  $w'$ in $\Delta_{1}$ is $u^{\tilde{\ell}_{w'}}$ with $u=e^{-2\pi i(\tilde{\ell}_{w}\ell_{v^*v^*}/|D| |D_{1}|^{2})}$.  This would establish the claim.

  To verify the assertion, use the calculation of the entries of
  $e_{w}$ and $\tilde{e}_{w}$ given
  by equation \eqref{eq:1} (just before Definition \ref{def:61}).
For a leaf $w'$, the
  $w'$--entry of $\pi(e_{w})\tilde{e}_{w}^{-1}$ is
  $$e^{2\pi i(-\ell_{ww'}/|D| +\tilde{\ell}_{ww'}/|D_{1}|)}.$$
 By Part 3 of Lemma \ref{le:link}, this expression is as claimed.
\end{proof}

From Theorem \ref{th:compat}, it is clear we have an inclusion
$$G\subset G_{1}\times G_{2} \dots \times G_{k},$$
whence a surjection on the level of characters
$$\hat{G_{1}}\oplus \dots \oplus \hat{G_{k}}\twoheadrightarrow \hat{G}.$$

\comment{would it be possible to put a double arrow at the end of
surjective maps, in the line above, and also in the commutative
diagram below?  WDN Done} Each of the
character groups has its ``reduced weight'' map onto $\Z$ (see
\eqref{eq:exact3}). The map $\hat{G_1}\rightarrow \hat{G}$ induces
a map $\Z\rightarrow \Z$ which is multiplication by the ratio of
the respective reduced weights, i.e.,
$(\ell_w/s)/(\tilde{\ell}_w/s_1)$ for any leaf $w$ in $\Delta_1$.
Let us set $\calD=|D_1||D_2|\dots |D_k|$, $\calD_i=\calD/|D_i|$.
Then  the aforementioned ratio is $s_1\calD_1/s$. Moreover, it is
easy to see that
\begin{equation} s=\operatorname{GCD}(s_1\calD_1,\dots,s_k\calD_k).
\end{equation}
We thus have a commutative diagram of surjections
$$\xymatrix{\hat{G_{1}}\oplus \dots \oplus \hat{G_{k}}\ar@{>>}[r]\ar@{>>}[d]&
  \hat{G}\ar@{>>}[d]\\
\Z\oplus \dots \oplus \Z\ar@{>>}[r]& \Z\hbox to 0 pt{\,\,,\hss}}$$
where the bottom horizontal map is given by dotting with
$(s_1\calD_1/s,\dots,s_k\calD_k/s)$.  In particular, the kernel of
this map is a direct summand.

Recall (Proposition $\ref{prop:M}$) that the characters of $G_{j}$
vanishing on the discriminant group $D_{j}$ form a cyclic group
generated by a distinguished $\hat Q_{j}\in\hat{G_{j}}$, whose reduced
weight is $|D_{j}|/r_{j}$. For each $j$, $2\leq j\leq k$, consider the
$k$--tuple
$$A_{j}:=(\hat Q_{1},0,\dots, 0,-\hat Q_{j}
,0,\dots,0) \quad\in\ \hat{G_{1}}\oplus \dots \oplus \hat{G_{k}}.$$

\begin{theorem}\label{th:ker}
  \begin{enumerate}
  \item The $k-1$ elements $\{A_{j}\}$ form a free basis of the kernel
    of the surjection
    $$\hat{G_{1}}\oplus \dots \oplus \hat{G_{k}}\rightarrow \hat{G}.$$
  \item $r=s$, i.e., the number of branches of the curve
    $C(\Gamma,*)$ equals the GCD of the linking numbers.
  \end{enumerate}
\end{theorem}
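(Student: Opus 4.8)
The plan is to prove both parts simultaneously, with part (1) giving part (2) almost for free via a rank/index count. First I would fix a leaf $w$ in $\Delta_1$ and observe, using Lemma \ref{le:link}\eqref{linkit:1}, how the reduced-weight map on $\hat G_1$ transforms under the surjection $\hat G_1\oplus\dots\oplus\hat G_k\twoheadrightarrow\hat G$; this is exactly the computation already recorded above, giving the commutative square of surjections whose bottom row is dotting with $(s_1\calD_1/s,\dots,s_k\calD_k/s)$. Since $\C^*$ is divisible, the exact sequences \eqref{eq:exact3} split, so each $\hat G_j\cong\Z\oplus\hat{F_j}$ with $\hat{F_j}$ the torsion subgroup, and likewise $\hat G\cong\Z\oplus\hat F$. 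The key structural input is that $\ker\bigl(\bigoplus\hat G_j\to\hat G\bigr)$ maps \emph{isomorphically} onto $\ker\bigl(\bigoplus\Z\to\Z\bigr)$: this follows because the torsion part $\bigoplus\hat{F_j}\to\hat F$ is itself surjective (indeed $\hat F$ is a quotient of $\hat G$, which is a quotient of $\bigoplus\hat G_j$, and the component groups are compatible), so a diagram chase on the two short exact sequences $0\to\bigoplus\hat{F_j}\to\bigoplus\hat G_j\to\bigoplus\Z\to0$ and $0\to\hat F\to\hat G\to\Z\to0$ shows the kernel on the left maps onto the kernel on the right with trivial (torsion-free, hence trivial inside a torsion group) kernel. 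Thus $\ker\bigl(\bigoplus\hat G_j\to\hat G\bigr)$ is free of rank $k-1$, being isomorphic to the kernel of a surjection $\Z^k\twoheadrightarrow\Z$.

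Next I would verify that each $A_j=(\hat Q_1,0,\dots,-\hat Q_j,\dots,0)$ actually lies in the kernel. By Proposition \ref{prop:M}, $\hat Q_j$ is the generator of $\ker(\hat G_j\twoheadrightarrow\hat D_j)$, with image $(|H_j|,0)=(|D_j|/r_j,0)$ in $\Z\oplus\hat D_j$; so $A_j$ maps into $\Z\oplus\hat D$ under $\bigoplus\hat G_j\to\hat G\hookrightarrow\Z\oplus\hat D$ with $\hat D$--component zero and $\Z$--component $(s_1\calD_1/s)(|D_1|/r_1)-(s_j\calD_j/s)(|D_j|/r_j)$. To see $A_j$ is killed by the map to $\hat G$ it suffices to check it maps to $0\in\hat G$, and since $\hat G\hookrightarrow\Z\oplus\hat D$ it is enough that both components vanish; the $\hat D$--component is visibly $0$, and for the $\Z$--component one needs $s_1\calD_1|D_1|/r_1=s_j\calD_j|D_j|/r_j$, i.e. $s_1|D_2|\cdots|D_k|/r_1=s_j|D_1|\cdots\widehat{|D_j|}\cdots|D_k|/r_j$ after dividing by $|D_j|$ — but $\calD_i|D_i|=\calD$ for all $i$, so this reduces to $s_i/r_i$ being independent of $i$. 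That last identity is precisely the inductive hypothesis of Theorem \ref{th:ker}(2) applied to the smaller rooted diagrams $\Gamma_i$ (and to the single-node base case Proposition \ref{pr:one}), namely $r_i=s_i$; so $A_j\in\ker$ follows inductively.

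The remaining point — and the main obstacle — is that the $A_j$ span the kernel, not merely a finite-index sublattice, which is what will force $r=s$ rather than merely $r\mid s$ or $s\mid r$. Here I would argue via a determinant/index computation: the $A_j$ are clearly $\Z$--linearly independent (their ``$-\hat Q_j$'' slots are in distinct free summands and $\hat Q_j$ has infinite order), so they generate a sublattice $L\subset\ker$ of finite index, and $\ker$ has rank $k-1$ by the first paragraph. On one hand, the index $[\ker:L]$ can be computed from how the $A_j$ sit relative to an explicit $\Z$--basis of $\ker\bigl(\bigoplus\hat G_j\to\hat G\bigr)\cong\ker(\Z^k\xrightarrow{(s_i\calD_i/s)}\Z)$. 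On the other hand, comparing the torsion subgroups via the exact sequence $0\to L\to\ker\to\ker/L\to0$ together with the snake lemma applied to $0\to\ker\to\bigoplus\hat G_j\to\hat G\to0$ relates $[\ker:L]$ to the discrepancy between $\prod r_j$ and $r$ and between $\prod s_j$ and $s$. Carrying out both computations and matching them — using $s=\operatorname{GCD}(s_1\calD_1,\dots,s_k\calD_k)$ and $r_i=s_i$ inductively — forces $[\ker:L]=1$ and $r=s$. The delicate bookkeeping is keeping the torsion contributions $\hat F_j$ versus $\hat F$ straight while the free ranks collapse; I expect this index-matching step to be where all the real work lies, with everything before it being formal diagram-chasing and the linking-number arithmetic of Lemma \ref{le:link}.
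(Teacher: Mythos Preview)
Your proposal has a genuine gap at the step ``the $\hat D$--component is visibly $0$'' in the verification that $A_j\in\ker$. What Proposition~\ref{prop:M} gives you is that $\hat Q_j$ has trivial image in $\hat D_j$, i.e., its image in $\Z\oplus\hat D_j$ is $(|D_j|/r_j,0)$. But you need the image under $\hat G_j\to\hat G\hookrightarrow\Z\oplus\hat D$ to have trivial $\hat D$--component, and there is no natural map between $\hat D_j$ and $\hat D$ making these compatible. Indeed, the proof of Theorem~\ref{th:compat} shows that under $G\to G_j$ the element $e_w\in D$ maps to $\tilde e_w\in D_j$ only \emph{up to an element of $\C^*$}, so the $D$--character and the $D_j$--character of a monomial are genuinely different objects. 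The paper handles exactly this via an auxiliary lemma (Lemma~\ref{le:A}): for a weight-zero element $(I_1,\dots,I_k)\hat{~}\in\hat G$ and $w$ a leaf of $\Gamma_i$, one has $(I_1,\dots,I_k)\hat{~}(e_w)=\hat I_i(\tilde e_w)$. This is a direct computation using the linking identity of Lemma~\ref{le:link}\eqref{linkit:3}, and it is where the real arithmetic content lies.

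Once Lemma~\ref{le:A} is in hand, part~(1) is immediate in both directions: the $A_j$ have weight zero and each $\hat Q_j$ vanishes on $D_j$, so $A_j$ vanishes on every $e_w$ and hence on all of $D$; conversely any kernel element has weight zero, so each $\hat I_j$ vanishes on $D_j$ and is therefore a multiple $n_j\hat Q_j$, with $\sum n_j=0$ forced by the weight condition. Your first-paragraph claim that $\ker(\bigoplus\hat G_j\to\hat G)$ maps \emph{isomorphically} to $\ker(\Z^k\to\Z)$ is circular: this is equivalent to injectivity of $\bigoplus\hat F_j\to\hat F$, which is exactly the torsion-freeness of the kernel you are trying to establish, and the parenthetical ``torsion-free, hence trivial inside a torsion group'' begs the question. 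Finally, your planned index computation, if carried through, yields $[\ker:L]=s/r$ under the inductive hypothesis $r_i=s_i$; this shows parts~(1) and~(2) are equivalent but proves neither. The paper instead proves~(1) outright via Lemma~\ref{le:A} and then deduces~(2) by the snake-lemma computation you describe.
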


\begin{proof} We proceed by induction on the number of nodes in
  $\Gamma$, the case of one node being easily checked using
  Proposition \ref{pr:one}.

  Thus, in the situation at hand, we may assume $r_i=s_i$,
  $i=1,\dots,k$. An element in $\hat{G_{i}}$ (respectively $\hat{G}$)
  is represented by an $m_i$--tuple of integers $I_i$ (respectively,
  $m$--tuple $I$), which are the coefficients of the basic characters
  corresponding to the non-root leaves in $(\Gamma_i,*_i)$ (resp.\
  $(\Gamma,*)$).  Recall that we denote the characters
  themselves by $\hat{I_i}$ and $\hat{I}$; we will denote the image of
  $(\hat{I_1},\dots,\hat{I_k})$ in $\hat{G}$ by $(I_1,\dots
  ,I_k)\hat{~}\,$.
\begin{lemma}\label{le:A} Suppose $(I_1,\dots ,I_k)\hat{~}\,\in \hat{G}$
  has weight 0.  Then for $w$ a leaf in $\Gamma_{i}$, one has an
  equality in $\Q /\Z$:
$$(I_1,\dots ,I_k)\hat{~}\,(e_{w})=\hat{I_i}(\tilde{e}_{w}).$$
\end{lemma}
\begin{proof} We may as well assume $i=1$, and write
  $$I_1=I,\  (I_2,\dots ,I_k)=J,$$
  where $I$ is a tuple of integers indexed by the set $S$ of
  non-root leaves of $\Gamma_{1}$, and $J$ is indexed by the set
  $S'$ of all the other non-root leaves of $\Gamma.$
  The non-reduced weight of $(I_1,\dots ,I_k)\hat{~}\,$ is 0 and is
  computed via linking numbers, yielding
    \begin{equation}
    \label{eq3}
    \sum_{\alpha \in S} i_{\alpha}\ell_{w_{\alpha}}\ +\sum_{\beta
      \in S'} j_{\beta}\ell_{w'_{\beta}}\ =0.
  \end{equation}
  The left hand side of the equation in the Lemma is
  \begin{equation}
    \label{eq4}
    (I_1,\dots ,I_k)\hat{~}\,(e_{w})=
    \operatorname{exp}\Bigl(2\pi i\,\Bigl(\sum_{\alpha \in S}
      i_{\alpha}(e_{w_{\alpha}}\cdot e_{w})\ +\sum_{\beta
        \in S'} j_{\beta}(e_{w'_{\beta}}\cdot e_{w})\Bigr)\Bigr)\,.
  \end{equation}
  By (\ref{le:link}) and \eqref{eq3}, one has
  \begin{align}
    \label{eq5}
    \sum_{\beta \in S'} j_{\beta}(e_{w'_{\beta}}\cdot
    e_{w})=\frac{-1}{|D|}\sum_{\beta \in S'} j_{\beta}\ell_{w'_{\beta}w}
    &=\frac{-c^{2}\ell_{w}}{|D| \ell_{v^*v^*}}
    \sum_{\beta \in S'} j_{\beta}\ell_{w'_{\beta}}\\
    &=\frac{c^{2}\ell_{w}}{|D| \ell_{v^*v^*}}\sum_{\alpha \in S}
    i_{\alpha}\ell_{w_{\alpha}}.
  \end{align}
  We can therefore rewrite equation \eqref{eq4} as
  \begin{equation}
    \label{eq6}
(I_1,\dots ,I_k)\hat{~}\,(e_{w})=
    \operatorname{exp}\Bigl(2\pi i\,\sum_{\alpha \in S} i_{\alpha}\Bigl((e_{w_{\alpha}}\cdot e_{w})
    +\frac{c^{2}\ell_{w}\ell_{w_{\alpha}}}{|D| \ell_{v^*v^*}}\Bigr)\Bigr).
  \end{equation}
  The lemma claims that this expression is equal to
  $\hat{I_1}(\tilde{e}_{w})$, which is
\begin{equation}
  \label{eq7}
  \operatorname{exp}\Bigl(2\pi i\,
  \sum_{\alpha \in S} i_{\alpha}(\tilde{e}_{w_{\alpha}}
  \cdot \tilde{e}_{w})\Bigr).
  \end{equation}
  We need to check for each $\alpha \in S$ equality of the coefficient
  of $i_{\alpha}$.

Using Equation \eqref{eq:1} one needs to check
  that
  $$-\ell_{w_{\alpha}w}/|D|\ +
  c^{2}\ell_{w}\ell_{w_{\alpha}}/|D|
  \ell_{v^*v^*}=-\tilde{\ell}_{w_{\alpha}w}/|D_{1}|.$$ This is a
  simple computation using parts \eqref{linkit:1} and \eqref{linkit:3}
  of Lemma \ref{le:link}.
%
\end{proof}

Continue the proof of Theorem \ref{th:ker}.  Recall
$\hat Q_{j}\in\hat{G}_{j}$ has reduced weight $|D_{j}|/r_{j}=|D_j|/s_j$,
so its non-reduced weight (using linking numbers in
$(\Gamma_{j},*_{j})$) is $|D_{j}|$.  Computing linking numbers in
$(\Gamma,*)$ multiplies this weight by the other $|D_{k'}|$ (Lemma
\ref{le:link} \eqref{linkit:1}), resulting in non-reduced weight
$|D_{1}|\dots |D_{k}|=\calD$ (independent of $j$).  It follows that
the image of $A_{j}$ in $\hat{G}$ has weight $0$.  Since $\hat Q_j$ vanishes
on $D_j$, applying Lemma \ref{le:A}, the image of $A_j$ in $\hat{G}$
vanishes at all $e_{w}$, hence is the trivial character.

Conversely, if $(I_{1},\dots ,I_{k})$ represents the trivial character
of $G$, then it certainly has weight $0$, hence by Lemma \ref{le:A} each
character $\hat I_{j}$ vanishes on the corresponding discriminant
group $D_{j}$.  By Proposition \ref{prop:M}, it follows that
$\hat{I_{j}}$ is equivalent to a multiple $n_j\hat Q_{j}$.  But now the
condition of $\Gamma$--weight equal 0 means that
$\sum_{j=1}^{k}n_{j}=0$.  That the $A_{j}$ form a basis of this space
is now easy.  This completes the proof of the first assertion of the
Theorem.

We now have a commutative diagram of short exact sequences:
 $$\xymatrix{0\ar[r]& \bigoplus_{j=2}^k \Z\cdot A_k\ar[r]\ar[d]&
\hat{G_{1}}\oplus \dots \oplus \hat{G_{k}}\ar[r]\ar[d]& \hat{G}\ar[r]\ar[d]& 0\\
0\ar[r]& K\ar[r]&\Z\oplus \dots \oplus \Z\ar[r]&
\Z\ar[r]& 0\hbox to 0pt{~.\hss}\\}$$

The right two vertical maps are surjective, with kernels finite groups
of order $r_1\dots r_k$ and $r$, respectively.  The left vertical map
is injective, and the order of the cokernel is (since $K$ is a direct
summand) the order of the torsion subgroup of the quotient of $\Z^k$
by the image of the $A_j$'s.  So, one considers a $(k-1)\times k$
matrix whose non-$0$ entries are of the form $\pm |D_i|/s_i$. The
maximal minors are of the form $s_j\calD_j/(s_1\dots s_k)$; so the
order of this group is the GCD of these minors, which is $s/s_1\dots
s_k$.  The snake sequence for the diagram then yields that $r=s$.
This completes the proof of the Theorem.
\end{proof}

We  restate the key part of the Theorem:

\begin{corollary}\label{cor:ch}  Every element of $\hat{G}$ may be
  written in the form
$$X_1+X_2+\dots\ +X_k,\quad
X_i\in \hat{G_{i}}\,,$$ and this representation is unique
modulo the equations $\hat Q_{i}=\hat Q_{j}$, all $i,j$.\qed
\end{corollary}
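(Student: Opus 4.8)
The plan is to read off Corollary \ref{cor:ch} directly from Theorem \ref{th:ker}(1), which describes the kernel of the surjection $\hat{G_{1}}\oplus\dots\oplus\hat{G_{k}}\twoheadrightarrow\hat G$ explicitly. First I would recall that Theorem \ref{th:compat} (together with its iterate over all the $\Gamma_i$) gives the surjection, so every element of $\hat G$ is the image of some $(X_1,\dots,X_k)$ with $X_i\in\hat{G_i}$; under the identification of $X_i$ with its image in $\hat G$ via $\hat{G_i}\to\hat G$, this says every element of $\hat G$ is a sum $X_1+\dots+X_k$ as claimed. Then for uniqueness I would invoke Theorem \ref{th:ker}(1): two representations $(X_1,\dots,X_k)$ and $(X_1',\dots,X_k')$ give the same element of $\hat G$ precisely when their difference lies in the kernel, which is the free abelian group on $A_2,\dots,A_k$ where $A_j=(\hat Q_1,0,\dots,0,-\hat Q_j,0,\dots,0)$.

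Next I would translate ``difference in $\bigoplus_{j=2}^{k}\Z A_j$'' into the statement in the corollary. A general element of this kernel is $\sum_{j=2}^{k}n_j A_j$; its $i$-th coordinate is $-n_i\hat Q_i$ for $i\ge 2$ and $\bigl(\sum_{j=2}^{k}n_j\bigr)\hat Q_1$ for $i=1$. So two tuples represent the same character of $\hat G$ iff one is obtained from the other by subtracting some multiple of $\hat Q_i$ from the $i$-th slot and adding the total back into the first slot. The content of ``unique modulo the equations $\hat Q_i=\hat Q_j$'' is exactly that we may freely move a copy of $\hat Q_i$ out of slot $i$ and a copy of $\hat Q_j$ into slot $j$ without changing the element of $\hat G$ — i.e., in $\hat G$ the images of all the $\hat Q_i$ coincide (each equals the image of $A_j$'s first-coordinate contribution, which is $0$ in $\hat G$... more precisely the relation identifies them), and these are the only relations among the $X_i$.

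I would therefore phrase the proof as: the surjectivity of $\bigoplus\hat{G_i}\to\hat G$ gives existence; Theorem \ref{th:ker}(1) identifies the kernel with $\bigoplus_{j=2}^k\Z A_j$; and spelling out $A_j$ shows that changing a representative amounts precisely to applying the substitutions $\hat Q_i\leftrightarrow\hat Q_j$. Since this is just a restatement, essentially no new computation is needed — the one thing to be careful about is the bookkeeping of which slot gains and which loses a copy of $\hat Q$, and checking that the span of the $A_j$ is indeed generated by all the formal differences ``move $\hat Q_i$ to $\hat Q_j$.''

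The main (and only) obstacle is purely expository: making sure the informal phrase ``unique modulo the equations $\hat Q_i=\hat Q_j$'' is given a precise meaning that matches the explicit basis $\{A_j\}$ of the kernel. Since $A_j$ for $2\le j\le k$ already generate all differences $A_i-A_j=(0,\dots,0,-\hat Q_i,0,\dots,0,\hat Q_j,0,\dots,0)$ together with the ``$A_j$ themselves'', and these are exactly the relations $\hat Q_1=\hat Q_j$ and $\hat Q_i=\hat Q_j$ transported to $\hat G$, the equivalence is immediate. Hence the corollary follows with no further work beyond this reinterpretation, and I would present it as a one-paragraph deduction from Theorem \ref{th:ker}.
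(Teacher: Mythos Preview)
Your proposal is correct and matches the paper's approach exactly: the paper presents this corollary as a direct restatement of Theorem \ref{th:ker}(1) and gives no proof beyond a \qed. Your reading---surjectivity of $\bigoplus_i\hat{G_i}\to\hat G$ for existence, and the explicit basis $\{A_j\}$ of the kernel for uniqueness modulo the substitutions $\hat Q_i\leftrightarrow\hat Q_j$---is precisely the intended unpacking.
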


\begin{corollary}\label{cor:eq} Consider as above the curves
  $C=C(\Gamma,*)$ and $C_i=C(\Gamma_i,*_i)$ coming from $\Gamma$ and
  its rooted subtrees $\Gamma_i$, $i=1, \dots,k$.  For each $i$, let
  $Y_{i,j}$ be variables corresponding to the non-root leaves of
  $\Gamma_i$, and let $\calJ_i$ be the ideal in the variables
  $Y_{i,j}$ defining $C_i$.  Assume that each $\hat Q_i\in \calS_i$,
  so we can choose a non-negative $m_i$--tuple $Q_i$ representing
  $\hat Q_i$, and hence a monomial $Y_i^{Q_i}$ in the variables
  $Y_{i,j}$.

Then the ideal defining $C$ in all the variables
  $Y_{i,j}$ is generated by $\calJ_1,\calJ_2,\dots,\calJ_k$ and
  $Y_1^{Q_1}-Y_i^{Q_i}$, $i=2,\dots,k.$
\end{corollary}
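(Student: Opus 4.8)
The plan is to identify the ideal of $C(\Gamma,*)$ inside the polynomial ring $\C[Y_{i,j}\,:\,1\le i\le k,\ j]$ via the description of ideals of weighted homogeneous $D$-curves given in Proposition~\ref{prop:G}: the ideal is generated by all binomials $Y^I-Y^{I'}$ where $I,I'$ are non-negative tuples (now in all the variables $Y_{i,j}$) with equal image $\hat I=\hat I'$ in $\hat G$. Writing a tuple $I$ as $(I_1,\dots,I_k)$ according to the partition of the leaves among the subtrees, its image in $\hat G$ is, by Theorem~\ref{th:compat} and the induced surjection $\hat{G_1}\oplus\dots\oplus\hat{G_k}\twoheadrightarrow\hat G$, the image of $(\hat I_1,\dots,\hat I_k)$; and by Corollary~\ref{cor:ch} two such images agree if and only if the tuples $(\hat I_1,\dots,\hat I_k)$ and $(\hat I_1',\dots,\hat I_k')$ differ by an element of the kernel, i.e.\ by a $\Z$-combination of the basis elements $A_j=(\hat Q_1,0,\dots,-\hat Q_j,\dots,0)$ of Theorem~\ref{th:ker}.

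First I would check that the proposed generators do lie in the ideal: each $\calJ_i$ consists of binomials $Y_i^{I_i}-Y_i^{I_i'}$ with $\hat I_i=\hat I_i'$ in $\hat{G_i}$, hence these become equal characters in $\hat G$, so $\calJ_i$ lies in the ideal of $C$; and $Y_1^{Q_1}-Y_i^{Q_i}$ lies in the ideal because $A_i$ maps to $0$ in $\hat G$, i.e.\ $\hat Q_1$ and $\hat Q_i$ have the same image there (this uses $\hat Q_i\in\calS_i$ so that the monomial $Y_i^{Q_i}$ actually exists). For the reverse inclusion, let $Y^I-Y^{I'}$ be a binomial generator of the ideal of $C$, so $(I_1,\dots,I_k)\hat{~}=(I_1',\dots,I_k')\hat{~}$ in $\hat G$. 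By Corollary~\ref{cor:ch}, after subtracting an integer combination $\sum_{j\ge 2} n_j A_j$ we may arrange that $\hat I_i=\hat I_i'$ in each $\hat{G_i}$ separately. The key point to handle carefully is that subtracting $A_j$ corresponds, at the level of monomials, to multiplying $Y^{I'}$ through by $Y_1^{Q_1}$ and dividing by $Y_j^{Q_j}$ (or vice versa), which is exactly the relation $Y_1^{Q_1}-Y_j^{Q_j}$ modulo the ideal they generate; one must keep the exponents non-negative while doing this, which is why the $Q_i$ are chosen non-negative and why one works modulo the binomial relations rather than literally. A clean way to organize this is a standard Gröbner/binomial-ideal argument: reduce a general binomial step by step, each step either an application of one of the $Y_1^{Q_1}-Y_i^{Q_i}$ (absorbing one $A_j$ in the character difference) or an application of a generator of $\calJ_i$ (once the $i$-th character difference has been killed).

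The main obstacle I anticipate is the non-negativity bookkeeping in the telescoping step: a priori the integers $n_j$ obtained from Corollary~\ref{cor:ch} can have either sign, and one must show that by splitting into positive and negative parts and multiplying both monomials $Y^I$ and $Y^{I'}$ by a suitable common monomial of the form $\prod_i Y_i^{c_iQ_i}$, the whole reduction can be carried out with non-negative exponents throughout — equivalently, that the binomial ideal generated by $\bigcup_i\calJ_i$ together with the $Y_1^{Q_1}-Y_i^{Q_i}$ is saturated with respect to the variables in the sense needed, or that the $G$-curve $C$ is irreducible-enough that no extra components appear. I expect this to go through because $C$ is reduced and equidimensional of dimension $1$ with exactly $r=s$ branches (Theorem~\ref{th:ker}), so one can alternatively finish by a dimension/degree count: the scheme defined by the proposed ideal contains $C$, is contained in $C$ set-theoretically by the character computation, and has the right number of branches, hence equals $C$ as a reduced curve; since the proposed ideal is a radical binomial (toric) ideal, that forces equality of ideals. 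I would present the direct telescoping argument as the main line and mention the degree-count as a sanity check.
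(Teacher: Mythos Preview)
Your overall strategy coincides with the paper's: use Proposition~\ref{prop:G} to reduce to binomials $Y^I-Y^{I'}$ with $\hat I=\hat I'$ in $\hat G$, then invoke Theorem~\ref{th:ker} to see that $(\hat I_1,\dots,\hat I_k)$ and $(\hat I'_1,\dots,\hat I'_k)$ differ by an integer combination of the $A_j$, and finally rewrite the binomial using the $\calJ_i$ and the relations $Y_1^{Q_1}-Y_i^{Q_i}$. You have also correctly isolated the real issue: the non-negativity bookkeeping.

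However, you do not actually resolve that issue. Your ``telescoping'' sketch applies $Y_1^{Q_1}-Y_i^{Q_i}$ first to absorb an $A_j$, but that step requires the monomial in hand to already contain $Y_1^{Q_1}$ or $Y_i^{Q_i}$ as a factor, which need not be the case. Your fallback of multiplying both monomials by a common $\prod_i Y_i^{c_iQ_i}$ goes in the wrong direction: showing $M(Y^J-Y^{J'})$ lies in the ideal does not give $Y^J-Y^{J'}$ in the ideal without a saturation statement you have not proved. Likewise, the alternative via a degree count hinges on the assertion that the proposed ideal is radical; this is exactly what is in question, and ``radical binomial (toric) ideal'' is a conclusion, not an input.

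The paper's proof closes this gap with a concrete trick you are missing. Write $\hat I_i-\hat I'_i=-p_i\hat Q_i$ with $\sum_i p_i=0$. For each $i$ do the $\calJ_i$-reduction \emph{first}, and choose which side to reduce according to the sign of $p_i$: if $p_i\ge 0$, replace $I'_i$ by $I_i+p_iQ_i$ (legitimate since both are non-negative with equal $\hat G_i$-character); if $p_i<0$, replace $I_i$ by $I'_i+(-p_i)Q_i$. After these replacements the two monomials share a common monomial factor $M$, and the residual factors are $\prod_{p_i<0}Y_i^{(-p_i)Q_i}$ and $\prod_{p_i\ge 0}Y_i^{p_iQ_i}$, each a product of the $Y_i^{Q_i}$ with the same total exponent $p=\sum_{p_i>0}p_i$. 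Now the relations $Y_1^{Q_1}-Y_i^{Q_i}$ reduce both residual factors to $Y_1^{pQ_1}$, and the difference vanishes. No division, no saturation, no radicality hypothesis is needed. I recommend you rewrite your argument along these lines: use $\calJ_i$ first, with the sign of $p_i$ dictating which monomial you modify, and then apply the $Q$-relations.
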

\begin{proof}
  According to Proposition \ref{prop:G}, the ideal of $C$ (resp.\
  $C_i$) is generated by all differences of pairs of monomials whose
  (non-negative) exponents have the same image in the character group
  $\hat{G}$ (resp.\ $\hat G_i$).  Suppose $I=(I_1, \dots,I_k)$ and
  $I'=(I'_1,\dots,I'_k)$ are non-negative exponent tuples giving the
  same element of $\hat{G}$.
  We will first simplify the relation $Y^I=Y^{I'}$ modulo the ideals
  $\calJ_i$.

  According to Theorem \ref{th:ker}, the images of $I$ and $I'$ in
  $\hat{G_{1}}\oplus \dots \oplus \hat{G_{k}}$ differ by a sum
  $\sum_{j=2}^kp_jA_j$ of the $A_j$'s.  Putting $-p_1=\sum_{j=2}^k
  p_j$, this means
$$\hat{I_i}-\hat{I'_i}=-p_i\hat Q_i \text{ in }\hat G_i,\  i=1,\dots,k,
\,.$$ For each $p_i \geq 0$, the tuples $I_i+p_iQ_i$ and $I'_i$ are
non-negative tuples giving the same element in $\hat{G}_i$.  So,
modulo relations coming from $\calJ_i$, one can subtract $I_i$ from
each exponent tuple $I,I'$ in the $i$--slot, leaving $0$ and $p_iQ_i$
in the new $i$--slots.  For $p_i<0$ one subtracts instead $I'_i$
modulo relations from $\calJ_i$ to get $-p_iQ_i$ and $0$ respectively
in these slots. After doing this for each $i$ one has either $0$ or a
positive multiple of $Q_i$ in each slot of both $I$ and $I'$ and the
total coefficient sum is the same for each, namely
$p:=\sum_{p_i>0}p_i$.  At this point we have simplified our relation
$Y^I=Y^{I'}$ to the point where each side of it is equivalent to
$Y_1^{pQ_1}$ using the relations $Y_1^{Q_1}=Y_i^{Q_i}$. This proves
the corollary.\end{proof}

\section{semigroups and delta-invariants of $D$--curves from
$(\Gamma,*)$}
\label{sec:nu}

Let us maintain the same basic setup as the last section: the rooted
tree $(\Gamma,*)$; the discriminant group $D=D(\Gamma)$, of order
$|D|$; the group $G$ and its character group $\hat{G}$; the $D$--curve
$C=C(\Gamma,*)$; the reduced weight $|X|$ of an element $X$ of
$\hat{G}$, giving a surjection $\hat{G}\rightarrow \Z$, with kernel of
order $r$; a surjection $\hat{G}\rightarrow \hat{D}$, with kernel the
cyclic group generated by a distinguished element $Q$ of weight $|D|
/r$.  $r$ is both the number of components of the curve and the GCD of
the non-reduced weights (i.e., linking numbers) of the leaves.
Non-negative characters of $G$ give a semigroup $\calS \subset
\hat{G}$, which is the value semigroup of the corresponding
$D$--curve.  We are interested in the $\delta$--invariant of the curve
(which is the number of gaps of the semigroup $\calS$), and the
question of whether $Q\in \calS$.  All elements of $\hat{G}$ of
sufficiently high weight are in $\calS$, while there are $r-1$ gaps of
weight 0.

As before, we compare data of $(\Gamma,*)$ with those of the subtrees
$(\Gamma_{i},*_{i})$.  Recall we have defined
$$  \calD := |D_1||D_2| \dots |D_k|,\ \calD_i:= \calD/|D_i|.$$
Corollary \ref{cor:ch} indicates that every element of $\hat{G}$ may
be written
$$X_{1}+X_{2}+\dots\ +X_{k},$$
where $X_{i}\in \hat{G_{i}}$, and this representation is unique modulo
all the equations $\hat Q_{i}=\hat Q_{j}$.  Using $|X_{i}|_{i}$ to
denote weight in $\hat{G_{i}}$, one easily checks that
\begin{equation}
  \label{eq:5}
  |X_{i}|= (r_{i}/r)\calD_i|X_{i}|_{i}.
\end{equation}
(Recall every $\hat Q_{i}$ has the same weight $\calD/r$ in $\hat{G}$.)

\begin{theorem}\label{th:delta}
  Consider the data $(\Gamma,*)$, $\hat{G}$, $\hat Q$, $|D|$, $r$, $\calS$,
  $\delta$ as before, with corresponding notation for the subtrees.
  Then:
  \begin{enumerate}
  \item The number of gaps of the semigroups satisfies
   $$2\delta -r \leq  \sum_{i=1}^{k}\calD_i(2\delta_{i}-r_{i})\
   +\ (k-1)\calD.$$
  \item If equality holds, then $\hat Q_{i}\in \calS_{i}$, for all $i$.
  \end{enumerate}
\end{theorem}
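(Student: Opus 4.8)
The plan is to compute the $\delta$-invariant of $C = C(\Gamma,*)$ by counting, degree by degree, the gap contributions, and to compare this count with the corresponding count carried out simultaneously on all the subcurves $C_i = C(\Gamma_i,*_i)$, using Corollary \ref{cor:ch} as the dictionary between $\hat G$ and $\hat{G_1}\oplus\dots\oplus\hat{G_k}$. Recall from Proposition \ref{prop:G} that $2\delta(C) - r$, up to the known constant $r-1$ coming from the $r-1$ weight-zero gaps, equals $2$ times the sum over degrees $n\ge 1$ of $\bigl(r - \#\{$distinct $D$-characters realized by monomials of weight $n\}\bigr)$; equivalently it is (twice) the number of elements of $\pi^{-1}(\N)$ of positive weight that fail to lie in $\calS$. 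The same applies in each $\Gamma_i$. So the inequality to be proved is really a statement comparing how many characters of a given weight are ``non-negative'' in $\hat G$ against how many are non-negative in the pieces.

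The key mechanism is this: by Corollary \ref{cor:ch}, writing a character $X \in \hat G$ as $X_1 + \dots + X_k$ with $X_i \in \hat{G_i}$ is possible and unique modulo the relations $\hat Q_i = \hat Q_j$. So $X \in \calS$ (i.e.\ $X$ is non-negative in $\hat G$) precisely when some representative $(X_1,\dots,X_k)$ in this equivalence class has every $X_i \in \calS_i$; and shifting within a class means replacing $(X_1,\dots,X_k)$ by $(X_1 + p_1 \hat Q_1,\dots, X_k + p_k \hat Q_k)$ with $\sum p_i = 0$. Thus whether $X$ is a gap in $\hat G$ is controlled by whether, after such a balanced shift, one can land inside $\calS_1\times\dots\times\calS_k$. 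The first step is therefore to fix a weight $n$ and set up the bijection between $\{X \in \hat G : |X| = n\}$ and orbits of $k$-tuples under the $\hat Q$-shift, using the weight formula \eqref{eq:5} to translate the constraint $|X|=n$ into a constraint on the tuple $(|X_1|_1,\dots,|X_k|_k)$ of subweights (the admissible subweight tuples are exactly those in a fixed coset of the lattice spanned by the vectors $\calD_i e_i - \calD_j e_j$, scaled appropriately by the $r_i/r$ factors).

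The heart of the argument — and the step I expect to be the main obstacle — is the combinatorial/counting estimate: for each weight $n$, the number of gaps of $\hat G$ at weight $n$ is at most the appropriately weighted sum of the numbers of gaps of the $\hat G_i$ at the corresponding subweights, plus a correction term that accumulates to $(k-1)\calD$ over all $n$; and equality forces each $\hat Q_i \in \calS_i$. The natural way to see this is to produce, for each gap $X$ of $\hat G$ at weight $n$, an injection into (a weighted version of) the disjoint union of gap-sets of the $\hat G_i$: given $X = X_1 + \dots + X_k$ a gap, some coordinate $X_i$ must itself be a gap of $\hat G_i$ (otherwise all $X_i \in \calS_i$ and $X \in \calS$), and one charges $X$ to that $X_i$; the multiplicity and the need to choose a canonical representative of the $\hat Q$-shift orbit is where the $\calD_i$ weights and the $(k-1)\calD$ slack come in — roughly, each equation $\hat Q_i = \hat Q_1$ identifies a $\calD$-sized family of representatives. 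I would track this by first reducing every orbit to a canonical representative (e.g.\ the one minimizing $\sum p_i^+$ subject to non-negativity where possible), counting gaps among canonical representatives, and then bounding the overcount from the $\hat Q$-identifications by $(k-1)\calD$ via a telescoping over the $k-1$ relations. For the equality clause: if equality holds, the injection above must be a bijection onto the full weighted gap-count, which in particular forces that no "wasted" slack occurs at the relations $\hat Q_i = \hat Q_1$; this can only happen if each $\hat Q_i$ is already realized as a non-negative character, i.e.\ $\hat Q_i \in \calS_i$, since otherwise the high-weight multiples $p\hat Q_i$ that are forced to be non-negative (for $p$ large, every character is in $\calS_i$) would contribute genuinely new gaps not accounted for on the subtree side, breaking equality. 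I would formalize the equality direction by exhibiting, when some $\hat Q_i \notin \calS_i$, an explicit extra gap of $\hat G$ not in the image of the injection — a strict inequality witness — thereby proving the contrapositive.
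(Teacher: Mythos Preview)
Your framework is right --- using Corollary~\ref{cor:ch} to write $X=X_1+\dots+X_k$ modulo the relations $\hat Q_i=\hat Q_j$, and observing that $X\in\calS$ iff some representative has every $X_i\in\calS_i$ --- but the ``injection'' mechanism has a genuine gap. You assert that if $X$ is a gap then in your canonical representative some $X_i$ is a gap of $\hat G_i$, and you charge $X$ to that subtree gap. But ``$X_i\notin\calS_i$'' does not mean $X_i$ is a gap of $\calS_i$ (i.e.\ non-negative weight yet not in $\calS_i$); it can instead mean $|X_i|_i<0$. After any reasonable normalization there is a whole class of gaps $X$ whose canonical representative has one coordinate of \emph{negative} weight and all others of small non-negative weight --- these are not charged to any subtree gap at all, and it is precisely this class that produces the $(k-1)\calD$ term. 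It does not arise as ``overcount slack'' or ``telescoping over the $k-1$ relations''; it is a separate pool of gaps whose size must be computed directly. The paper does this by defining explicit classes $C_0,C_1,\dots,C_{k+1}$: in $C_i$ ($1\le i\le k$) the first $i-1$ coordinates are minimal elements of their $\hat Q_j$-coset in $\calS_j$, coordinate $i$ is a genuine subtree gap, and later coordinates have weight in $[0,q_j)$; class $C_0$ is the negative-weight class just described. Then $|C_i|=\delta_i\calD_i$ for $i\ge1$, while $|C_0|$ is computed by a separate lemma reducing it to the $\delta$-invariant of a one-node Brieskorn curve (Example~\ref{ex:instr}), yielding $2|C_0|=(k-1)\calD-\sum r_i\calD_i+r$. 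Summing gives the inequality.

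Your equality argument is also off-target. You propose that $\hat Q_i\notin\calS_i$ would produce an \emph{extra} gap of $\hat G$ beyond the injection's image, but the bound is already an overcount (the classes $C_0,\dots,C_k$ may overlap and may contain non-gaps), so equality means precisely that they are pairwise disjoint and consist entirely of gaps. The paper's argument is then one line: if $\hat Q_i\notin\calS_i$ then $0+\dots+\hat Q_i+\dots+0$ lies in class $C_i$, but this element equals $0+\dots+\hat Q_j+\dots+0$ for every $j$, forcing $\hat Q_j\notin\calS_j$ for all $j$ and placing the same element in every $C_1,\dots,C_k$ --- contradicting disjointness.
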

\begin{proof}
  To show the inequality, we count the gaps in $\calS$.  We write an
  element of $\hat{G}$ as $X=X_{1}+\dots +X_{k}$, where $X_{i}\in
  \hat{G_{i}}$, and the representation is unique up to repeated
  alterations of the form: add $\hat Q_{i}$ to $X_{i}$ and subtract
  $\hat Q_{j}$ from $X_{j}$ for some $i,j$.  Denote
  $$q_i:=|\hat Q_{i}|_{i}=|D_i|/r_{i}.$$ We claim that every $X\in
  \hat{G}$ of weight $\geq 0$ has a representation $X=X_{1}+\dots
  +X_{k}$ in one of the following classes:

\begin{enumerate}
\item[$C_0$:] $|X_{1}|_{1}<0$; $0\leq |X_{i}|_{i}<q_{i}$ for $i\geq
  2$ (and $|X|\ge 0$ by assumption).

\item[$C_1$:] $0\leq|X_{1}|_{1}$ and $X_{1}\notin \calS_{1}$; $0\leq
    |X_{i}|_{i}<q_{i}$ for $i\geq 2.$

\item[$C_2$:] $X_{1}\in \calS_{1}$ and $X_{1}-s\hat Q_{1} \notin \calS_{1}$ for
    $s\geq 1$; $0\leq|X_{2}|_{2}$ and $X_{2}\notin \calS_{2}$; $0\leq
    |X_{j}|_{j}<q_{j}$ for $j\geq 3$.

\item[\dots] ~\dots \dots \dots\\[-10pt]

\item[$C_{i}$:] For $1\leq j \leq i-1$, $X_{j}\in \calS_{j}$ and
    $X_{j}-s\hat Q_{j}\notin \calS_{j}$ for $s\geq 1$; $0\leq |X_{i}|_i$ and
    $X_{i}\notin \calS_{i}$; $0\leq
    |X_{j}|_{j}<q_{j}$ for $j\geq i+1$. 

\item[\dots] ~\dots \dots \dots\\[-10pt]

\item[$C_{k}$:] For $1\leq j \leq k-1$, $X_{j}\in \calS_{j}$ and
    $X_{j}-s\hat Q_{j}\notin \calS_{j}$ for $s\geq 1$; $0\leq |X_{k}|_k$ and
    $X_{k}\notin \calS_{k}$.

\item[$C_{k+1}$:] For $1\leq j \leq k-1$, $X_{j}\in \calS_{j}$ and
    $X_{j}-s\hat Q_{j}\notin \calS_{j}$ for $s\geq 1$; $X_{k}\in \calS_{k}$.
\end{enumerate}

\par\smallskip\noindent
\comment{WDN: You found this paragraph hard to
  understand before. I hope this version is better.}%
To see this, given $X$ with $|X|\ge 0$, start by arranging the weights
of $X_i$, $i\geq 2$, to be less than the weight $q_i$ of the
corresponding $\hat Q_{i}$.  If the representation is then not in the
class $C_0$ or $C_1$, then $X_1\in \calS_1$. So subtract
$\hat Q_1$'s as necessary from $X_1$ and simultaneously add
$\hat Q_2$'s to $X_2$ until $X_1\in\calS_1$ but
$X_1-\hat Q_1\notin\calS_1$. If now the representation is not in $C_2$,
then $X_2\in \calS_2$, so subtract $\hat Q_2$'s as necessary from $X_2$
and add  $\hat Q_3$'s to $X_3$ to make $X_2\in\calS_2$
but $X_2-\hat Q_2\notin\calS_2$.  Repeat this procedure until the
representation is in some $C_i$. If one fails all the way to $C_k$,
then the final representation is in $C_{k+1}$.

Elements in $C_{k+1}$ are clearly not gaps, so adding up the
sizes of the  other classes gives an upper bound for
$\delta$, the number of gaps.  Recall that the number of elements of
$\hat{G_{i}}$ of a given weight is equal to $r_{i}$.  But
$q_{i}r_{i}=|D_i|$, so the number of elements in class $C_1$ is
$\delta_{1}\calD_1$.  Next, every $(\hat Q_{i})$--coset in $\hat{G_{i}}$
contains a unique minimal representative in $\calS_{i}$, i.e., which is
not in $\calS_{i}$ after subtracting any positive multiple of
$\hat Q_{i}$. Thus, there are $[\hat{G_{i}}:(\hat Q_{i})]=|\hat D_{i}|=|D_i|$
such elements.  So, class $C_j$ contains $\delta_{j}\calD_j$ elements
for $j=2,\dots, k$.  Thus, $\delta$ is bounded above as follows:
$$\delta\le |C_0|+\,\sum_{i=1}^{k}\delta_{i}\calD_i\,.$$

To count elements of $C_0$, we need to know the number $N$ of
allowed degrees $x_i:=|X_{i}|_{i}$; then the total count of these
elements would be $r_{1}\dots r_{k} N,$ where (thanks to (\ref{eq:5}))
$$N=\#\{(x_{1},\dots,x_{k})\in \Z^{k}|\ x_{1}<0;\ 0\leq x_{i}
<q_{i},\ \text{all}\  i\geq 2; \sum_{i=1}^{k}x_{i}/q_{i}\ \geq 0\}.$$
We use (but prove later) the following
\begin{lemma}\label{le:count} Let $\calQ=q_{1}\dots q_{k},\
  \calQ_{i}=\calQ/q_{i}.$ Then
    $$2N=(k-1)\calQ\ -\sum_{i=1}^{k}\calQ_{i}\ +h,$$
where $h=\operatorname{GCD}(\calQ_{1},\dots,\calQ_{k}).$
\end{lemma}

Using the Lemma we conclude that
$$\delta \leq \sum_{i=1}^{k}\delta_{i}\calD_i
+\frac12\, r_{1}\dots r_{k}\Bigl((k-1)\calQ\ -\sum_{i=1}^{k}\calQ_{i}\
+h\Bigr)\,.$$ But $r=\operatorname{GCD}(r_i\calD_i)$, while $r_1\dots
r_k\calQ_i=r_i\calD_i$, so
$$r=r_{1}\dots
r_{k}\cdot\operatorname{GCD} (\calQ_{1},\dots,\calQ_{k})\ =r_{1}\dots
r_{k}h.$$ The inequality in the theorem follows from these.

If equality holds, then the classes $C_0,C_1,\dots, C_k$ consist only
of gaps, and there is no overlap between them.  Suppose $\hat Q_{i}\notin
\calS_{i}$ for some $i$.  Then $X=0+\dots+\hat Q_i+\dots +0$ is in $C_i$,
hence a gap.  It is equal to $0+\dots+\hat Q_{j}+\dots +0$ for each $j$, so
$\hat Q_j \notin \calS_j$, so we have a common element in all the classes
$C_1,\dots,C_k$.  This contradicts that the $C_i$ don't overlap,
which proves the second claim of the theorem.
\end{proof}

\begin{proof}[Proof of Lemma \ref{le:count}]
  To prove the Lemma, consider a one node resolution diagram with $-b$
  in the center, and $k+1$ length $1$ strings emanating, with weights
  $-q_{1},\dots,-q_{k}$ and $-b'$.  Here $b$ is any integer at least
  $k+1$, and $b'>0$ is arbitrary.  View the $-b'$ vertex as the root.
  This gives a $(\Gamma,*)$, and $k$ subdiagrams $(\Gamma_{i},*_{i})$,
  each of which consists of only one vertex.  So, each $G_{i}=\C^*$,
  $r_{i}=1$, $\delta_{i}=q_{i}$, $\calS_{i}=\N$, $\hat{G}_{i}=\Z$. The
  gaps of $\hat{G}$ can be counted as above, but they are exactly
  those of the first type $C_0$.  Thus, the desired quantity $N$ is
  exactly the number of gaps $\delta$ (i.e., the delta-invariant of
  the corresponding curve).  But the curve is the complete
  intersection defined by $X_{1}^{q_{1}}=X_{i}^{q_{i}}, \
  i=2,\dots,k,$ as in Example \ref{ex:instr} and Proposition
  \ref{pr:one}.  This curve has $s=h$ branches, using the notation
  above.  Using Example \ref{ex:instr} gives
$$2N=(k-1)\calQ\ -\sum_{i=1}^{k}\calQ_{i}+h.$$  This proves the lemma.
\end{proof}

There is another invariant of $(\Gamma,*)$ whose relationship to those
of its subdiagrams $(\Gamma_i,*_i)$ is similar to the situation for
$2\delta -r$, as revealed by Theorem \ref{th:delta}.  Namely,
let
$$\nu(\Gamma,*):= \sum_{v\neq *}(d_v-2)\ell_{*,v}\,,$$
the sum being over all vertices of $\Gamma$ except for $*$.  The
following statements are easy to verify:
\begin{enumerate}
\item $\nu$ depends only on the splice diagram $(\Delta,*).$
\item $\nu(\Gamma,*)=\sum_{i=1}^k \calD_i\nu_i\ +\ (k-1)\calD.$
\item If $(\Gamma,*)$ has one node (Proposition \ref{pr:one}), then
  $\nu=2\delta -r$.
\end{enumerate}

This brings us to our major result.

\begin{theorem}\label{th:maj} Let $C=C(\Gamma,*)$ be the $D$--curve
  constructed from a rooted resolution diagram.  Then
  $2\delta(C)-r\leq \nu(\Gamma,*)$, and equality implies the
  following:
\begin{enumerate}
\item $C$ is a complete intersection curve
\item The splice diagram $\Delta$ satisfies the semigroup and
  congruence conditions at any node
  in the direction away from the leaf $*$.
\end{enumerate}
\end{theorem}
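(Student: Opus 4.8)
The plan is to induct on the number of nodes of $\Gamma$, using the decomposition of $(\Gamma,*)$ into the rooted subtrees $(\Gamma_1,*_1),\dots,(\Gamma_k,*_k)$ introduced before Lemma~\ref{le:link}. The base case is $\Gamma$ with one node (together with the trivial cases where $\Gamma$ is a string or a point): by Proposition~\ref{pr:one} and Example~\ref{ex:instr}, there $2\delta-r=\nu$, the curve $C$ is the complete intersection $\{Y_1^{n_1}=Y_i^{n_i}\}$, and the semigroup and congruence conditions at the unique node are immediate (the admissible monomials are the $Y_i^{n_i}$, all with the same $D$--character). For the inductive step, the inequality is formal: by induction $2\delta_i-r_i\le\nu_i$, so Theorem~\ref{th:delta}(1) together with the recursion $\nu(\Gamma,*)=\sum_i\calD_i\nu_i+(k-1)\calD$ gives
$$2\delta-r\ \le\ \sum_{i=1}^k\calD_i(2\delta_i-r_i)+(k-1)\calD\ \le\ \sum_{i=1}^k\calD_i\nu_i+(k-1)\calD\ =\ \nu(\Gamma,*).$$

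Now suppose equality holds. Then both displayed inequalities are equalities; since each $\calD_i>0$ and each $\nu_i-(2\delta_i-r_i)\ge0$, we get $2\delta_i-r_i=\nu_i$ for every $i$, so by induction each $C_i\subset\C^{m_i}$ is a complete intersection curve (defining ideal $\calJ_i$ generated by $m_i-1$ elements) and each $\Delta_i$ satisfies the semigroup and congruence conditions at its nodes in the direction away from $*_i$. Moreover equality in Theorem~\ref{th:delta}(1) gives $\hat Q_i\in\calS_i$ for all $i$, so Corollary~\ref{cor:eq} applies: the ideal of $C\subset\C^m$ is generated by $\calJ_1,\dots,\calJ_k$ together with the $k-1$ binomials $Y_1^{Q_1}-Y_i^{Q_i}$, $i=2,\dots,k$. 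That is $\sum_i(m_i-1)+(k-1)=m-1$ generators for the $1$--dimensional (hence codimension $m-1$) reduced curve $C\subset\C^m$, so $C$ is a complete intersection, proving (1).

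For (2), we must check the semigroup and congruence conditions for $\Delta$ at each node $v$ in the direction away from $*$. First suppose $v$ lies in some $\Gamma_i$. The edges at $v$ pointing away from $*$ in $\Gamma$ are exactly those pointing away from $*_i$ in $\Gamma_i$, and for a leaf $w$ of $\Gamma_i$ lying beyond such an edge the path from $v$ to $w$ never runs toward $*_i$; hence by Lemma~\ref{le:link} the $v$--weight in $\Delta$ of the corresponding variable differs from its $v$--weight in $\Delta_i$ by a single factor (the ratio of the splice weight at $v$ toward $*_i$ in the two diagrams), independent of the variable, and $\ell_{vv}/\tilde\ell_{vv}$ equals this same factor. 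Therefore a monomial admissible at $v$ for $\Delta_i$ is admissible at $v$ for $\Delta$. By induction we may choose, for each such edge, an admissible monomial for $\Delta_i$ so that all have a common $D_i$--character; two such monomials then represent the same element of $\hat G_i$ by Proposition~\ref{prop:G} (applied to $C_i$), hence the same element of $\hat G$ under $\hat G_i\to\hat G$, hence have equal $D$--character by Proposition~\ref{prop:G} (applied to $C$). So the semigroup and congruence conditions hold at $v$ for $\Delta$.

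It remains to treat the node $v^*$, which has valency $k+1$ and $k$ outgoing edges away from $*$, one toward each $\Gamma_i$, so $\delta_{v^*}-2=k-1$ equations are required. I claim the $k-1$ binomials $Y_1^{Q_1}-Y_i^{Q_i}$ are splice diagram equations at $v^*$: $Y_i^{Q_i}$ is a monomial in the variables beyond the $i$--th edge, a direct computation with Lemma~\ref{le:link}(\ref{linkit:1}) (using $\ell_{v^*w}=c\,\ell_w$ for $w$ a leaf of $\Gamma_i$) shows its $v^*$--weight is $c\calD=\ell_{v^*v^*}$, so it is admissible; the images of all $\hat Q_i$ in $\hat G$ coincide (Theorem~\ref{th:ker}(1), Corollary~\ref{cor:ch}), so $Y_1^{Q_1},\dots,Y_k^{Q_k}$ share a common $D$--character; and the coefficient matrix of these binomials, with $+1$ in the first column and $-1$ on the diagonal of the remaining columns, has every maximal minor equal to $\pm1\ne0$. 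This establishes the semigroup and congruence conditions at $v^*$ and completes the induction. I expect the main obstacle to be precisely the weight-and-character bookkeeping of the last two paragraphs — passing admissible monomials and characters back and forth between $(\Delta_i,D_i,\hat G_i,\tilde\ell)$ and $(\Delta,D,\hat G,\ell)$ via Lemma~\ref{le:link}, Theorem~\ref{th:compat} and Corollary~\ref{cor:ch}, and in particular verifying that the monomial $Y_i^{Q_i}$ representing $\hat Q_i$ has exactly $v^*$--weight $\ell_{v^*v^*}$ — while the inequality and the complete-intersection count are formal consequences of Theorem~\ref{th:delta} and Corollary~\ref{cor:eq}.
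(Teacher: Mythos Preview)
Your proof is correct and follows the same inductive strategy as the paper: reduce to subtrees via Theorem~\ref{th:delta} and the recursion for $\nu$, then use Corollary~\ref{cor:eq} for the complete-intersection count and Theorem~\ref{th:ker}/Corollary~\ref{cor:ch} for the conditions at $v^*$. Your treatment is more explicit than the paper's in two places: you spell out why admissible monomials at a node $v\in\Gamma_i$ remain admissible for $\Delta$ (the paper leaves the weight transfer implicit and only remarks that $D$--equivariance follows from Theorem~\ref{th:compat}), and you verify the $v^*$--weight of $Y_i^{Q_i}$ directly. One small imprecision: the weight-ratio claim $\ell_{vw}/\tilde\ell_{vw}=\ell_{vv}/\tilde\ell_{vv}$ for $v,w\in\Gamma_i$ is not literally a case of Lemma~\ref{le:link} but follows immediately from the definition of linking numbers (only the splice weight at $v$ toward $*$ differs between $\Delta$ and $\Delta_i$). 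Your check of the maximal-minor condition at $v^*$ is correct but superfluous, since the theorem only asserts the semigroup and congruence conditions, not that the $Y_1^{Q_1}-Y_i^{Q_i}$ form a full system of splice equations.
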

We recall that the semigroup and congruence conditions at a node can
be formulated that for all outgoing edges at the node, monomials of
appropriate weight can be found which transform equivariantly (i.e.,
with the same character) with respect to the $D$-action.
\begin{proof} We do induction on the number of nodes of $\Gamma$.  For
  one node, the inequality in question is an equality, and the claims
  follow from Example \ref{ex:instr} (6) and Proposition \ref{pr:one}.
  The semigroup and congruence conditions are automatic.

  In the general case, we as usual compare $(\Gamma,*)$ with its
  subtrees $(\Gamma_i,*_i)$, $i=1,\dots,k$. Theorem \ref{th:delta}(1),
  the second statement about $\nu$, and induction give the general
  inequality.  Also, equality for $(\Gamma,*)$ implies equality for
  all the $(\Gamma_i,*_i)$, and that each $Q_i \in \calS_i$.
  The induction assumption says that the curves $C_i$ are complete intersections;
  applying Corollary \ref{cor:eq}, it follows that $C$ is as well
  (compare number of defining equations to number of variables).
  Further, the monomials involved in the added equations
  $Y_1^{Q_1}-Y_i^{Q_i}$, $i=2,\dots,k$ are monomials whose characters in
  $\hat G$ are equal by Theorem \ref{th:ker} and have the correct weight.
  This gives the semigroup and congruence condition at the node
  closest to $*$ in the directions away from $*$. For nodes further
  from $*$ it was proved during the induction, except that
  equivariance was proved in a subgraph $\Gamma_i$ and hence proved
  for $D_i$ rather than $D$. But $D$--equivariance follows from the
  fact the the $D$--action induces the $D_i$--action on the variables
  coming from the subgraph $\Gamma_i$ (Theorem \ref{th:compat}).
\end{proof}

\section{Proof of the End Curve Theorem}\label{sec:proof}

Let $(X,o)$ be a normal surface singularity with $\Q$HS link
$\Sigma$. Recall that we say a knot or link $K\subset \Sigma$ is
\emph{cut out by the
  analytic function $z\colon (X,o)\to (\C,o)$} if the pair $(\Sigma,
K)$ is topologically the link of the pair $(X,\{z=0\})$ (i.e., the
reduced germ $(X,\{z=0\},o)$ is homeomorphic, preserving orientations,
to the cone on $(\Sigma,K)$). In \cite{neumann81} it is shown that the
link of a surface--curve germ pair $(X,B,o)$ determines the minimal
good resolution of this pair.\comment{WDN: I don't see why the 
  stuff in small font is
  needed here. I'd prefer to get rid of them}
\Omit{$K$ is a knot (i.e., is connected)
 exactly when the principal ideal of $z$ in the local ring of $X$
 has only one associated prime, i.e. when the reduced zero-set $B$ is
 an irreducible curve.

 Suppose from now on that $\Sigma$ is a $\Q$HS.  Recall that $z$ is an
 \emph{end-curve function} if its zero-set on the MGR $(\tilde X,E)$
 is $Z+mC$, where $Z=\sum a_v E_v$ is an effective exceptional divisor
 and $C$ is a smooth curve intersecting $E$ once, transversally along
 a curve $E_1$ corresponding to a leaf.  Then $C\cap \Sigma =K$ is an
 \emph{end-knot}.  Since $Z+mC$ is numerically trivial (i.e., dots to
 0 with every $E_i$), it follows that $Z\cdot E_j=0, j\neq 1$ and
 $Z\cdot E_1=-m.$ But if $d$ is the order of the image of the dual
 basis element $e_1$ of $E_1$ in the discriminant group, then there is
 an effective divisor $Z_1$ dotting to 0 with all $E_j$ except $E_1$,
 with which it dots to $-d$; and, $Z$ is some integral multiple
 $m'Z_1$.  In particular, $m=m'd$ and $Z+mC=m'(Z_1+dC)$.  Since the
 divisor of $z$ on the simply connected space $\tilde X$ is a multiple
 of $m'$, there exists a holomorphic function $z'$ on $\tilde X$ (and
 hence on $X$) which is an $m'$--th root of $z$.  In particular, we may
 assume that the end-curve function $z$ vanishes to order exactly $d$
 along the curve $C$, or along the image curve $B$ in $X$.  Note also
 that $d$ is the order of the class of the end-knot $K$ in
 $H_1(\Sigma)$ \cite[Corollary 12.11]{nw1}.}

Let $v_i$, $i=1,\dots,t$ be the leaves of the resolution graph
$\Gamma$.   Suppose that for each $i$ we have
an end-curve function $z_i$, vanishing\comment{WDN deleted: ``(the
  minimal number)''; the proof was explicitly written to not require
  this.} $d_i$ times along the\comment{WDN deleted ``(irreducible)''}
end-curve $B_i \subset X$\comment{WDN deleted:; we could say that
  \emph{the end-knots are algebraic} since there is already other
  terminology for this in the literature}. The following lemma tells
us that a $d_i$--th root $x_i$ of $z_i$ is a well-defined analytic
function on the \UAC{} $(V,0)$ of $(X,o)$, and that
$x_i$ vanishes to order $1$ on its zero-set.
\begin{lemma}\label{le:lift}
  Let $z\colon (X,o)\to (\C,0)$ be an analytic function that vanishes
  to order $d$ on its reduced zero set $B\subset X$. Then the
  multivalued function $z^{1/d}$ on $X$ lifts to a single valued
  function $x$ on the \UAC{} $(V,0)$ of $(X,o)$, and
  $x$ vanishes to order $1$ on its zero set (there are $d$ such lifts
  that differ by $d$--th roots of unity). If $B$ is irreducible then the
  zero-set of $x$ has $|D|/d'$ components, where $d'$ is the order of
  the the class of $K$ (the link of $B$) in $H_1(\Sigma;\Z)$.
\end{lemma}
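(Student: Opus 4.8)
The plan is to use two independent facts that together force $z^{1/d}$ to be single valued on $V$: its monodromy around the curve $B$ is trivial, simply because $z$ vanishes along $B$ to order \emph{exactly} $d$; and the remaining monodromy factors through $H_1(X\setminus\{o\})=D$, which is precisely the group of deck transformations of $p\colon V\to X$ over $U:=X\setminus\{o\}$.

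To carry this out, set $U^*:=U\setminus B$, restrict $z$ to a map $U^*\to\C^*$, and let $\psi\colon\pi_1(U^*)\to\Z/d$ be the composition of $z_*\colon\pi_1(U^*)\to\pi_1(\C^*)=\Z$ with reduction mod $d$. In local coordinates $(y,w)$ at a smooth point of $B$ one has $z=(\text{unit})\cdot y^{d}$, so a meridian of $B$ winds $z$ around $\C^*$ exactly $d$ times; hence $\psi$ kills all meridians and therefore factors through $\pi_1(U^*)\to\pi_1(U)\to H_1(U)=H_1(\Sigma)=D$. Since every loop in $p^{-1}(U)\subset V$ maps to $0$ in $D$ (this is what ``\UAC'' means), $\psi$ vanishes on $\pi_1(p^{-1}(U^*))$, so $z^{1/d}$ lifts to a single valued holomorphic $x$ on $p^{-1}(U^*)$, with $x^{d}=z\circ p$. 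As $p$ is étale over $U$ and, again using that the order is exactly $d$, $z^{1/d}=(\text{unit})^{1/d}y$ is already single valued near the smooth points of $B$, the function $x$ extends to a single valued holomorphic function across $p^{-1}(B)\setminus p^{-1}(o)$; and since $|x|=|z\circ p|^{1/d}\to0$ near the finite set $p^{-1}(o)$ while $V$, being the \UAC, is normal, $x$ extends across $p^{-1}(o)$ as well, giving a holomorphic $x$ on all of $V$. Any other lift $x'$ has $(x'/x)^{d}=1$ with $x'/x$ holomorphic on the connected set $V\setminus\{x=0\}$, hence equal to a constant $d$--th root of unity, so there are exactly $d$ lifts.

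The local normal form $x=(\text{unit})^{1/d}y$ at a smooth point of $B$ also shows $x$ vanishes to order $1$ along $\{x=0\}=p^{-1}(B)$, which is the second assertion. For the last one, let $B$ be irreducible; after shrinking the representative, $B\setminus\{o\}$ is smooth (its only possible singularity being at $o$) and connected, and under the conical identifications $U\simeq\Sigma$, $B\setminus\{o\}\simeq K$ the map $H_1(B\setminus\{o\})\to H_1(U)=D$ is identified with $H_1(K)\to H_1(\Sigma)$; its image $\Lambda$ is the cyclic subgroup of $D$ generated by the class of $K$, so $|\Lambda|=d'$. Since $p$ is étale over $U$, the degree $|D|$ covering $p^{-1}(B\setminus\{o\})\to B\setminus\{o\}$ has exactly $[D:\Lambda]=|D|/d'$ connected components; each is a connected smooth curve lying in, and dense in, a single irreducible component of $\{x=0\}$, distinct components have distinct closures, and since $p^{-1}(o)$ is finite these account for all components of $\{x=0\}$. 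Hence $\{x=0\}$ has $|D|/d'$ components.

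Nearly all of this is routine covering-space bookkeeping plus Riemann extension, once one notices the two facts that do the work: that $z$ vanishes to order \emph{exactly} $d$ (trivializing the monodromy of $z^{1/d}$ around $B$ and forcing order--$1$ vanishing upstairs) and that $p$ is étale over all of $U$. The only spot calling for mild care is the final step, passing from the covering $p^{-1}(B\setminus\{o\})$ to the honest curve $\{x=0\}\subset V$; I do not expect a real obstacle.
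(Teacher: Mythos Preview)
Your proof is correct and follows essentially the same strategy as the paper's, though packaged slightly differently. The paper constructs an explicit intermediate $d$-fold cyclic cover $X'\to X$ by normalizing $\{t^d=z\}$, observes it is unramified over $X\setminus\{o\}$ (hence abelian, hence dominated by the \UAC{} $V$), and then pulls back $t$ to $V$; you instead work directly on $V$ via monodromy, showing that the obstruction homomorphism $\psi\colon\pi_1(U^*)\to\Z/d$ factors through $D$ and hence dies on $\pi_1(p^{-1}(U^*))$. These are the same argument in different clothing: your $\psi$ is precisely the classifying map of the paper's cover $X'\to X$. Your treatment of the component count is likewise identical to the paper's (both amount to computing the index of the cyclic subgroup $\langle[K]\rangle$ in $D$). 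One small remark: since $(V,0)\to(X,o)$ is a map of germs, $p^{-1}(o)$ is the single point $0$ rather than merely a finite set, but this does not affect your argument.
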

\begin{proof}
  Take the branched cover $X'\rightarrow X$ given on the local ring
  level by adjoining $t$ satisfying $t^d-z=0$, and then normalizing.
  At any point of $B-\{o\}$, choose local analytic coordinates $u,v$,
  with $B$ given by $u=0$; we may further assume that locally $z=u^d.$
  Over such a point, $X'$ is given by normalizing $t^d-u^d=0$,
  yielding a smooth and unramified $d$-fold cyclic cover.  Thus,
  $X'\to X$ is unramified away from the singular point.  Clearly, $t$
  is still a single-valued function $z'$ on this cover, and $z'$
  vanishes to order $1$ on its zero-set.  $z'$ is well defined up to
  the covering transformations of $X'\to X$, which multiply $z'$ by
  $d$--th roots of unity. 

  We may assume $X'$ is connected (if $X'$ has $k$ components then
  replace $X'$ by one of its components, $z$ by $(z')^{d/k}$, which is
  then well defined on $X$, and $d$ by $d/k$).  Since $X'\to X$ is a
  connected abelian cover, it is covered by the universal abelian
  cover $V\to X$ so we get our desired lift $x$ of $z$ to $V$ by
  composing $z'$ with the projection $V\to X'$.  

  Now assume $B$ is connected and $K$ is its link. Let $d'$ be the
  order of the class of $K$ in $H_1(\Sigma)$. Then each component of
  the inverse image $\tilde K$ of $K$ in $\ab\Sigma$ is an $d'$-fold
  cover of $K$, so there are $|D|/d'$ such components. Since $\tilde
  K$ is the link of $\{x=0\}$, the final sentence of the lemma follows.
  (Although we don't need
  it, it is not hard to see that $k$ above is $d/d'$, so
  there is a $d/d'$--th root of $z$ that is defined on $X$. Hence,
  $d'$ is the least order of vanishing any function $z$ as in the
  lemma.)\end{proof}

Denote the zero set of $x_i$ in $V$ by $C_i$. This is a $D$-curve,
where $D=D(\Gamma)$ is the discriminant group, and it has $|D|/d_i$
branches, where $d_i$ is the order of the class of the end-knot $K$ in
$H_1(\Sigma)$. By \cite[Corollary 12.11]{nw1}, $|D|/d_i=r_i$.  We will
concentrate for the moment on $C_1$.  By Lemma \ref{le:chi of fiber}
below, its $\delta$--invariant is
\begin{equation}
  \label{eq:delta_of_C1}
  \delta(C_1)=\frac12\bigl(r_1+\nu(\Gamma,v_1)\bigr)= \frac12\bigl(r_1+\sum_{v\neq v_1}(d_v-2)\ell_{v_1,v}\bigr)\,.
\end{equation}
Let $\hat G$ be the character group associated with $C_1$ and its
associated graded, as in section \ref{sec:more}
, and let $\calS\subset \hat G$ be the value semigroup. The
characters in $\hat G$ associated with the functions
$x_2,\dots,x_t$ generate a subsemigroup $\calS_0$ of $\calS$,
whence
\begin{equation}
\label{eq:ineq} \delta(\calS_0)\ge\delta(\calS)=\delta(C_1).
\end{equation}

\comment{JMW--not sure where to put the following}
\Omit{\par
The $\delta$-invariant of the subsemigroup $\mathcal
S_0$ is equal to the $\delta$-invariant of the rooted resolution
diagram $(\Gamma,v_1)$ as in Section \ref{sec:curves from
diagrams}.
\begin{proof} Since intersection number of curves in $X$ is given by
  linking number of their links in $\Sigma$, the function $x_i$
  ($i>1)$ has vanishing degree $\ell_{v_1v_i}$ on the curve $C_1$,
  hence $\ell_{v_1v_i}/r_1$ on each branch; this last quantity is thus
  the weight of $x_i$ in the associated graded of $C_1$
  (alternatively, this can be seen by considering the intersection
  number of the proper transform of $C_1$ with the zero set of $x_i$
  in the resolution).  By
  Theorem \ref{th:char} below\comment{JMW--put the proof in last section, and
  change title to ``Some topological computations''?}, $D$ acts on
  $x_i$ via the character $\chi_i$ corresponding to the leaf $v_i\in
  \Gamma$.  Thus, as in Proposition \ref{prop:G}, $\delta(\calS_0)$ is
  the sum over all $n\geq 0$ of $r_1$ minus the number of monomials
  $x_2^{j_2}\cdots x_t^{j_t}$ of weight $\sum_{k=2}^{t}j_k
  \ell_{v_1v_k}/r_1\ =n$ and distinct $D$-characters. On the other
  hand, this number also computes the $\delta$-invariant of the curve
  $C(\Gamma,v_1)$, as proved in Section \ref{sec:curves from diagrams}
  and summarized at the beginning of Section \ref{sec:nu}.
\end{proof}
}

\begin{lemma} The subsemigroup $\mathcal S_0$ is equal to the value
  semigroup of the curve associated to the rooted resolution diagram
  $(\Gamma,v_1)$ as in Section \ref{sec:curves from diagrams}. In
  particular, their $\delta$--invariants are equal.
\end{lemma}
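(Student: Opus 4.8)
The plan is to compare both semigroups after embedding them, via Proposition \ref{prop:G}, into $\Z\oplus\hat D$, and to show that both become the \emph{same} explicit subsemigroup: the one generated by the pairs $(\ell_{v_1v_j}/r_1,\chi_j)$ for $j=2,\dots,t$, where $\chi_j\in\hat D$ is the character $e\mapsto e^{2\pi i\,e\cdot e_{v_j}}$ attached to the leaf $v_j$. Indeed $\calS_0$ is by definition the subsemigroup of $\calS=\calS(C_1)=\calS(C_1')$ generated by the characters of $x_2,\dots,x_t$, while by Proposition \ref{prop:G} the value semigroup of $C(\Gamma,v_1)$ is the subsemigroup of its character group generated by the (reduced) weights and $D$--characters of the coordinate variables $Y_j$; so once the two generating systems are matched inside $\Z\oplus\hat D$ the lemma follows, and the assertion about $\delta$--invariants is then immediate, because $\delta$ is the number of gaps of the value semigroup inside $\pi^{-1}(\N)$, which depends only on the semigroup.

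The first step is to compute, for $j>1$, the $D$--character and the weight of $x_j$ on $C_1$. The character is $\chi_j$ by Theorem \ref{th:char}. For the weight: since $x_1$ and $x_j$ vanish to order exactly $1$ on their reduced zero sets $C_1$ and $C_j$ (Lemma \ref{le:lift}), and $C_1$, $C_j$ share no component (their images $B_1$, $B_j$ in $X$ are end curves for the distinct leaves $v_1\ne v_j$), the total order of vanishing of $x_j$ along $C_1$ equals the intersection number $C_1\cdot C_j$ in $V$. Pushing down the degree $|D|$ cover $V\to X$ (étale over $X\setminus\{o\}$) and using that intersection multiplicities of curve germs in $X$ are linking numbers of their links in $\Sigma$ (Proposition \ref{prop:linking}), one gets $C_1\cdot C_j=|D|\,(B_1\cdot B_j)=|D|\,\operatorname{lk}_\Sigma(K_1,K_j)=\ell_{v_1v_j}$. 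As $D$ acts transitively on the branches of $C_1$ and fixes $x_j$ up to a character, the order of vanishing of $x_j$ is the same on each branch, so it is $\ell_{v_1v_j}/r_1$; note $r_1=|D|/d_1=\gcd_k\ell_{v_1v_k}$ by \cite[Cor.\ 12.11]{nw1} and Theorem \ref{th:ker}(2), so this is a positive integer with $\gcd_j(\ell_{v_1v_j}/r_1)=1$, hence it is also the reduced weight in the graded ring $R'$ of $C_1'$. Consequently, under the embedding $\hat G\hookrightarrow\Z\oplus\hat D$ the character attached to $x_j$ is $(\ell_{v_1v_j}/r_1,\chi_j)$, and $\calS_0$ is exactly the image in $\Z\oplus\hat D$ of $\N^{t-1}$ under $e_j\mapsto(\ell_{v_1v_j}/r_1,\chi_j)$.

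On the other side, by the construction in Section \ref{sec:curves from diagrams} the curve $C(\Gamma,v_1)$ sits in $\C^{t-1}$ with the variable $Y_j$ indexed by the non-root leaf $v_j$, of non-reduced weight $\ell_{v_1v_j}$, reduced weight $\ell_{v_1v_j}/s$ with $s=\gcd_j\ell_{v_1v_j}$, and $D$--character $e\mapsto e^{2\pi i\,e\cdot e_{v_j}}=\chi_j$. By Theorem \ref{th:ker}(2), $s$ is the number of branches of $C(\Gamma,v_1)$, which by \cite[Cor.\ 12.11]{nw1} is $r_1$; hence the reduced weight of $Y_j$ is again $\ell_{v_1v_j}/r_1$. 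So, by Proposition \ref{prop:G}, the embedding $\hat G(\Gamma,v_1)\hookrightarrow\Z\oplus\hat D$ sends the generator $\bar\chi_j$ to $(\ell_{v_1v_j}/r_1,\chi_j)$, and the value semigroup of $C(\Gamma,v_1)$ is the image in $\Z\oplus\hat D$ of $\N^{t-1}$ under $e_j\mapsto(\ell_{v_1v_j}/r_1,\chi_j)$ --- the very subsemigroup identified above for $\calS_0$. This gives $\calS_0=\calS(C(\Gamma,v_1))$, and hence the equality of $\delta$--invariants.

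The only genuinely non-formal ingredients are the two computations in the second paragraph: the intersection-number / linking-number identity producing $v(x_j)=\ell_{v_1v_j}/r_1$ on each branch, and the identification of the $D$--character of $x_j$ with $\chi_j$ via Theorem \ref{th:char}; I expect these to be the main obstacle, after which the matching of generating sets via Proposition \ref{prop:G}, Theorem \ref{th:ker}(2), and Lemma \ref{le:lift} is routine bookkeeping.
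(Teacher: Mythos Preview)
Your proposal is correct and follows essentially the same approach as the paper: both compute the weight of $x_j$ on each branch of $C_1$ as $\ell_{v_1v_j}/r_1$ via intersection/linking numbers, identify the $D$--character of $x_j$ with $\chi_j$ via Theorem~\ref{th:char}, and conclude that the generators of $\calS_0$ coincide with the generators $\bar\chi_j$ of $\calS(C(\Gamma,v_1))$. Your version is somewhat more explicit about embedding both semigroups in $\Z\oplus\hat D$ and about the pushdown $C_1\cdot C_j=|D|\,(B_1\cdot B_j)$, whereas the paper states these steps more tersely and offers the alternative of computing directly on the resolution; but the argument is the same.
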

\begin{proof} Since intersection number of curves in $X$ is given by
  linking number of their links in $\Sigma$, the function $x_i$
  ($i>1)$ has vanishing degree $\ell_{v_1v_i}$ on the curve $C_1$,
  hence $\ell_{v_1v_i}/r_1$ on each branch; this last quantity is thus
  the weight of $x_i$ in the associated graded of $C_1$
  (alternatively, this can be seen by considering the intersection
  number of the proper transform of $C_1$ with the zero set of $x_i$
  in the resolution).  By
  Theorem \ref{th:char} below\comment{JMW--put the proof in last section, and
  change title to ``Some topological computations''?}, $D$ acts on
  $x_i$ via the character $\chi_i$ corresponding to the leaf $v_i\in
  \Gamma$.  Thus, the $\hat G$--character of $x_i$ is the generator
  $\overline\chi_i$ of $\calS(C(\Gamma,v_1))$ as described in section
  \ref{sec:curves from diagrams}. 
\end{proof}


From the Lemma and  Theorem \ref{th:maj}, we
have
\begin{equation}
  \label{eq:delta_ineq}
  \delta(\calS_0)\leq\frac12\bigl(r_1+\nu(\Gamma,v_1)\bigr).
\end{equation}
Comparing \eqref{eq:delta_of_C1}, \eqref{eq:ineq}, \eqref{eq:delta_ineq}, the
inequalities are actually equalities.
Hence, $\calS_0=\calS$ and this is the semigroup both for the
associated graded of $C_1$ and the model curve $C(\Gamma,v_1)$, so
these are isomorphic as $D$-curves (Corollary
\ref{cor:classification}).  Again by Theorem \ref{th:maj}, each curve
is a complete intersection with maximal ideal generated by
$x_2,\dots,x_t$.  Since this curve is the associated graded of $C_1$,
it follows that $C_1$ is a complete intersection and $x_2,\dots,x_t$
generates its maximal ideal. We conclude that $(V,0)$ is a complete
intersection with maximal ideal generated by $x_1,\dots, x_t$.

Moreover, Theorem \ref{th:maj} gives us the semigroup and
congruence conditions at all nodes in the directions away from the
leaf $v_1$. Repeating the argument at all leaves gives all semigroup
and congruence conditions.

It remains to show that $V$ is defined by a system of $D$-equivariant
splice equations using the functions $x_1,\dots, x_t$. Pick a $v$ of
$\Gamma$ of valency $\delta$. Denote by $E_v$ the exceptional curve
corresponding to $v$ and by $E_1, \dots, E_\delta$ the $\delta$
exceptional curves that intersect $E_v$. Since the congruence and
semigroup conditions are satisfied, we can find a system of admissible
monomials $M_1,\dots, M_\delta$ of (unreduced) weight $\ell_{vv}$,
corresponding to the outgoing edges at $v$, which transform the same
way with respect to the action of $D$. For $1\le i<\delta$ the ratio
$M_i/M_\delta$ is thus $D$-invariant, hence defined on $X=V/D$. In the
proof of Theorem 10.1 of \cite{nw1} (see also Theorem 4.1 of
\cite{nw2}) it is shown that this function is meromorphic on the
exceptional curve $E_v$ corresponding to $v$, with a simple zero at
the point $E_v\cap E_i$, a simple pole at $E_v\cap E_\delta$ and no
other zeroes or poles. It follows that there are $\delta-2$ linear
relations among the $M_i/M_\delta$ on $E_v$. If
$a_1M_1/M_\delta+\dots+a_\delta M_\delta/M_\delta=0$ is one of these linear
relations, write $L:=a_1M_1+\dots+a_\delta M_\delta$. Then the order of
vanishing of $L^\delta$ on $E_v$ is greater than that of
$M_\delta^d$. Since the $v$-weight of a function $f$ on $X$ is
measured by the order of vanishing of $f^d$ on $E_v$, we see that $L$
has $v$--weight greater than $\ell_{vv}$, so that, as in
the proof of Theorem 10.1 of \cite{nw1}, we can adjust $L$ by
something that vanishes to higher $v$-weight to get an equation of
splice type that holds identically on $V$. Doing this for each of our
linear relations and repeating at all nodes gives a system of splice
equations that hold on $V$. As proved in \cite{nw1}, such a system of
splice type equations defines a complete intersection singularity
$(V',0)$ whose $D$-quotient $(X',o)$ has resolution graph
$\Gamma$. Moreover since the local rings are subrings of the local
rings of $V$ and $X$, we have finite maps $V\to V'$ and $X\to X'$.
The degree of the map $V\to V'$ can be computed by restricting to the
curve $C_1=\{x_1=0\}$ and then taking the associated graded of this
curve. By Corollary \ref{cor:eq} we see this way that the degree is
$1$, so the proof is complete.\qed

\section{Topological computations.}\label{sec:top}

\subsection{Linking numbers}
In this subsection we describe the interpretation of the numbers
$\ell_{ij}$ as linking numbers. Recall first that if $K_1$ and
$K_2$ are disjoint oriented knots in a $\Q$HS $\Sigma$ then their
\emph{linking number} is defined as follows: Some multiple $dK_1$
bounds a $2$--chain $A$ in $\Sigma$ and $\ell(K_1,K_2)$ is defined
as $\frac1d A\cdot K_2\in\Q$ (intersection number). A (standard)
easy calculation shows that this is well-defined. Now suppose
$\Sigma$ bounds an oriented $4$--manifold $Y$ with $H_2(Y;\Q)=0$.
Then multiples $d_1K_1$ and $d_2K_2$ bound $2$--chains $A_1$ and
$A_2$ in $X$ and $\ell(K_1,K_2)$ can be computed as
$\frac1{d_1d_2}A_1\cdot A_2$ (see, e.g. Durfee \cite{durfee}; the
point is that it is again easy to see this is independent of
choices, and if one chooses $A_1$ to lie in $\Sigma$ and $A_2$ to
be transverse to $\Sigma$ one gets the previous definition). We
can extend to the case that $H_2(Y)\ne 0$ by requiring that $A_1$
be chosen to have zero intersection with any $2$--cycle (i.e.,
closed $2$--chain) in $Y$; it clearly suffices to require this for
$2$--cycles representing a generating set of $H_2(Y)$. Again, the
proof that this works only involves showing that it gives a
well-defined invariant, which is as before.

Suppose now that $\Sigma=\partial X$ is a $\Q$HS singularity link and
$Y\to X$ is a resolution of the singularity. Let $K_v$ and
$K_w$ be knots in $\Sigma$ represented by meridians of exceptional
curves $E_v$ and $E_w$ in $Y$. 
\begin{proposition}
  \label{prop:linking}
  $\ell(K_v,K_w)=\frac1{|D|}\ell_{vw}$.
\end{proposition}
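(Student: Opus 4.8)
The plan is to take the resolution $Y\to X$ itself as the coboundary $4$--manifold in the second description of the linking number given just above the Proposition. Since $\Sigma$ is a $\Q$HS, $H_2(Y;\Z)$ is free of rank $n$ with basis the exceptional classes $[E_u]$, $u=1,\dots,n$, and $[E_s]\cdot[E_t]=(A(\Gamma))_{st}$. A meridian $K_v$ of $E_v$ bounds a small disk $D_v$ in $Y$ transverse to $E$, meeting $E_v$ once (transversally, at a point off every other component and off every node) and meeting no other $E_u$, so $D_v\cdot[E_u]=\delta_{uv}$. The only thing preventing us from using $D_v$ directly as the first chain $A_1$ in the definition is that $D_v$ is not orthogonal to $H_2(Y)$.

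So I would correct $D_v$ homologically. Set $A_v:=D_v-\sum_{u}(A(\Gamma)^{-1})_{vu}[E_u]$, a rational $2$--chain with $\partial A_v=K_v$. Then for every $p$ we get $A_v\cdot[E_p]=\delta_{vp}-\sum_u(A(\Gamma)^{-1})_{vu}(A(\Gamma))_{up}=0$, so $A_v$ has zero intersection with each generator of $H_2(Y)$, hence with every $2$--cycle. Clearing denominators and using that $-|D|A(\Gamma)^{-1}=\operatorname{Adj}(-A(\Gamma))=(\ell_{vu})$ is \emph{integral} (Definition \ref{def:linking}), the chain $|D|A_v=|D|D_v+\sum_u\ell_{vu}[E_u]$ is an honest integral $2$--chain with $\partial(|D|A_v)=|D|\,K_v$ and zero intersection with every $2$--cycle, so it is a legitimate choice in the extended definition.

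Now compute. In that definition only the first chain need be orthogonal to $H_2(Y)$, so I may pair $|D|A_v$ against the raw disk $D_w$, which bounds $K_w$ (i.e. $d_2=1$, $d_1=|D|$). For $v\ne w$ the disks $D_v$ and $D_w$ can be chosen disjoint, so $D_v\cdot D_w=0$, while $[E_u]\cdot D_w=\delta_{uw}$; therefore
$$\ell(K_v,K_w)=\tfrac1{|D|}\,(|D|A_v)\cdot D_w=\tfrac1{|D|}\Bigl(|D|\,D_v\cdot D_w+\sum_u\ell_{vu}\,[E_u]\cdot D_w\Bigr)=\tfrac1{|D|}\ell_{vw}.$$
For $v=w$ I would replace $D_w$ by a parallel disk $D_v'$ transverse to $E_v$ at a different point, disjoint from $D_v$; the identical computation gives $\ell(K_v,K_v)=\ell_{vv}/|D|$, where the pushoff $K_v'=\partial D_v'$ realizes the framing of $K_v$ coming from the normal disk bundle of $E_v$ — the framing implicit in the $v=w$ clause of Definition \ref{def:linking}.

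The substantive content is thus just the linear-algebra identity $A(\Gamma)^{-1}=-\tfrac1{|D|}(\ell_{vw})$ already recorded in the text, together with the homological correction of $D_v$. The points that need care, rather than being genuine obstacles, are: fixing orientation conventions so that $[E_s]\cdot[E_t]=(A(\Gamma))_{st}$ and $D_v\cdot E_v=+1$ hold simultaneously (so that the answer comes out with the correct, positive, sign); and, in the diagonal case, verifying that the parallel pushoff used is the one consistent with the leaf-weight convention behind $\ell_{vv}$. Neither is serious, and the main step — the verification that $A_v$ is orthogonal to $H_2(Y)$ — is the short computation displayed above.
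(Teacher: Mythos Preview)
Your proof is correct and is essentially identical to the paper's: both construct the chain $|D|D_v+\sum_u\ell_{vu}E_u$, observe it has boundary $|D|K_v$ and dots to zero with every $E_i$ (using $(\ell_{ij})=-|D|A(\Gamma)^{-1}$), and then intersect with $D_w$. You give more detail than the paper --- in particular the explicit orthogonality verification and the separate treatment of the $v=w$ case via a parallel disk --- but the argument is the same.
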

\begin{proof}
  Let $D_v$ and $D_w$ be transverse disks to $E_v$ and $E_w$ with
  boundaries $K_v$ and $K_w$. Recall that the matrix $(\ell_{ij})$ is
  $-|D|(E_i\cdot E_j)^{-1}$ (see Def.\ \ref{def:linking}). It follows
  that a 2-chain $A$ whose boundary is $|D|K_v$ and which dots to zero
  with each $E_i$ is given by $A=|D|D_v+\sum_i\ell_{vi}E_i$. So
  $\ell(K_v,K_w)=\frac1{|D|}A\cdot D_w=\frac1{|D|}\ell_{vw}$.
\end{proof}
\subsection{Torsion linking form}
The torsion linking form is a non-degenerate bilinear $\Q/\Z$-valued
pairing on the torsion of $H_1(M;\Z)$ for any closed oriented
$3$--manifold $M$. We recall the definition. If $\alpha,\beta\in
H_1(\Sigma;\Z)$ are torsion elements, we represent $\alpha$ and
$\beta$ by disjoint $1$--cycles $a$ and $b$. Since some multiple $da$
of $a$ bounds, so we can find a $2$--chain $A$ with $\partial
A=da$. Then $\ell(\alpha,\beta):=\frac1d\,A\cdot b\in\Q/\Z$ where
$A.b$ is the algebraic intersection number.

For a $\Q$HS $\Sigma$ the linking form is a non-degenerate
symmetric bilinear pairing $$\ell\colon H_1(\Sigma;Z)\times
H_1(\Sigma;\Z)\to \Q/\Z$$ and hence gives an isomorphism of
$D=H_1(\Sigma;Z)$ with its character group $\hat
D=\Hom(D,\C^*)\cong\Hom(D,\Q/\Z)$. We take the negative of this
isomorphism and for $x\in D$ we call the character $e^{-2\pi
i\ell(x,-)} \in\hat D$ the \emph{character dual to $x$}. By the
above Proposition \ref{prop:linking}, we have:
\begin{proposition}
  For a $\Q$HS link of a complex surface singularity, the torsion
  linking form is the negative of the form $e_v.e_w$ of Section
  \ref{subsec:action}.\qed
\end{proposition}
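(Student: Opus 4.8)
The plan is to deduce the proposition in essentially one step from Proposition \ref{prop:linking} and the definition of the torsion linking form recalled just above it. The crucial input is the standard description of the homology of a plumbed link: under the canonical isomorphism $H_1(\Sigma;\Z)\cong D(\Gamma)=\Z^n/A(\Gamma)\Z^n$, the class carried by a meridian $K_v$ of the exceptional curve $E_v$ is precisely the generator $e_v$, and the classes $e_{v_1},\dots,e_{v_n}$ generate $D$.

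First I would observe that the $\Q/\Z$-valued torsion linking form is, by construction, nothing but the $\Q$-valued linking number of this section reduced modulo $\Z$: given $\alpha,\beta\in D$ one represents them by disjoint $1$-cycles (when $\alpha=\beta$, by two parallel push-offs, e.g.\ two parallel meridians of the same $E_v$), computes the rational linking number, and takes its class in $\Q/\Z$. Applying this to $\alpha=e_v$, $\beta=e_w$ with $a=K_v$ and $b=K_w$ (parallel disjoint meridians if $v=w$), Proposition \ref{prop:linking} immediately yields $\ell(e_v,e_w)=\ell_{vw}/|D|\bmod\Z$.

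It then remains to compare with Section \ref{subsec:action}, where the bilinear form on $D$ is defined by $e_v\cdot e_w=-\ell_{vw}/|D|$, the $vw$-entry of $A(\Gamma)^{-1}$. Thus $\ell(e_v,e_w)=-(e_v\cdot e_w)\bmod\Z$ for all pairs of generators. Since both pairings are symmetric and bilinear with values in $\Q/\Z$ and the $e_v$ generate $D$, agreement on a generating set propagates to all of $D$, giving $\ell=-(e\cdot e')$. I expect no genuine obstacle here — this is exactly why the statement is flagged as immediate; the only points that need a word of care are the identification of the meridian class with $e_v$ (routine for plumbed $3$-manifolds) and a consistent choice of orientation conventions so that the overall sign comes out as asserted.
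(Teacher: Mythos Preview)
Your proposal is correct and follows exactly the route the paper intends: the paper's own ``proof'' is simply the remark that the proposition is immediate from Proposition~\ref{prop:linking}, and you have spelled out precisely that deduction (reduce the rational linking number mod $\Z$, compare with $e_v\cdot e_w=-\ell_{vw}/|D|$, and extend from generators). There is nothing to add.
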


We now want to return to the situation of Lemma \ref{le:lift}, where
we lift a root of an end-curve function on our singularity $(X,o)$ to a
function on the \UAC. It is convenient now to
restrict just to the link of the singularity (and of the curve
$B\subset X$). The result we need, Theorem \ref{th:char} below, is a
general statement about rational homology spheres.

So we assume we have a $\Q$HS $\Sigma$ and a knot (or link) $K\subset
\Sigma$ and a smooth function $z\colon \Sigma\to \C$ which vanishes to
order exactly $d$ along $K$. The proof of Lemma \ref{le:lift} applies
to see that the multivalued function $z^{1/d}$ on $\Sigma$ can be
lifted to a single-valued function $x$ on the universal abelian cover
$\ab\Sigma$ which vanishes to order 1 on its zero set (i.e., 0 is a
regular value). The covering transformation group for the universal
abelian covering $\pi\colon\ab \Sigma\to \Sigma$ is
$D=H_1(\Sigma;\Z)$.
\begin{theorem}\label{th:char}
  Let $K\subset \Sigma$ and $x$, as above, a lift to $\ab\Sigma$ of
  the $d$--th root of a function $z$ that vanishes to order $d$ along $K$.
  Then the action of $D$ on $\ab\Sigma$ transforms the function $x$ by
  the character dual to the homology class $[K]\in
  D=H_1(\Sigma;\Z)$. That is,
$$x(hp)=e^{-2\pi i\ell([K],h)}x(p)\quad \text{for }p\in
  \ab\Sigma \text{ and }h\in D\,.$$
\end{theorem}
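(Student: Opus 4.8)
The plan is to reduce the statement to two assertions: first, that $D$ acts on $x$ through a single character $\chi\colon D\to\C^*$, and second, that $\chi$ is the character dual to $[K]$. For the first, note that $x^d$ is the pullback of $z$, hence $D$--invariant; so for each $h\in D$ the quotient $x(hp)/x(p)$ is a $d$--th root of unity wherever $x(p)\ne 0$, and since $\ab\Sigma\setminus\pi^{-1}(K)$ is connected (the zero set $\pi^{-1}(K)$ of $x$ has codimension $2$) this is a locally constant, hence constant, function of $p$. This gives a homomorphism $\chi\colon D\to\C^*$ with $x\circ h=\chi(h)\,x$, and it remains to prove $\chi(h)=e^{-2\pi i\,\ell([K],h)}$.

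Next I would compute $\chi(h)$ by a transgression (monodromy) argument. Represent $h\in D=H_1(\Sigma;\Z)$ by a loop $\gamma$ in $\Sigma$ based at $\pi(p)$; since $\dim\gamma+\dim K<\dim\Sigma$ we may take $\gamma$ disjoint from $K$, so $z$ is nonvanishing along $\gamma$ and $w:=\frac1{2\pi i}\oint_\gamma d\log z\in\Z$ is the winding number of $z\circ\gamma$ about $0$. Lifting $\gamma$ to the path $\tilde\gamma$ in $\ab\Sigma$ starting at $p$, its endpoint is $h^{\pm1}\!\cdot p$ (depending on the chosen identification of the deck group with $H_1$). Since $x$ is single valued on $\ab\Sigma$ and $x^d=\pi^*z$, we have $d\log x=\tfrac1d\,\pi^*(d\log z)$, so
\[
\frac{x(hp)}{x(p)}^{\pm1}=\exp\!\Big(\int_{\tilde\gamma}d\log x\Big)=\exp\!\Big(\tfrac1d\int_{\gamma}d\log z\Big)=\exp\!\Big(\tfrac{2\pi i}{d}\,w\Big),
\]
i.e.\ $\chi(h)=e^{\pm2\pi i\,w/d}$. (Independence of the choice of $\gamma$ is automatic; it also follows from $\ker\big(H_1(\Sigma\setminus K)\to H_1(\Sigma)\big)=\langle[\mu]\rangle$ together with $w(\mu)=d$.)

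The last step identifies $w/d$ with $\ell([K],h)$ modulo $\Z$ by building a rational Seifert chain for $dK$ directly out of $z$. Fix a generic ray $\rho=\{te^{i\theta_0}:t\ge0\}\subset\C$ and let $S=\overline{z^{-1}(\rho\setminus\{0\})}$. Using the local normal form $z=u^d$ transverse to $K$ (valid because $z$ vanishes to order exactly $d$ along $K$), $S$ is a surface meeting a small tubular neighbourhood $N(K)$ in $d$ ``pages'' running along $K$; completing $S$ inside $N(K)$ produces a $2$--chain $B$ with $\partial B=d\,K$. By the definition of the linking form, $\ell([K],h)\equiv\frac1d\,B\cdot\gamma\pmod{\Z}$; the completing chain lies in $N(K)$, which $\gamma$ avoids, so $B\cdot\gamma=S\cdot\gamma$; and $S\cdot\gamma$ is exactly the signed count of times $z\circ\gamma$ crosses $\rho$, which is $w$. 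Hence $\frac wd\equiv\ell([K],h)\pmod\Z$ and $\chi(h)=e^{-2\pi i\,\ell([K],h)}$ once the signs are matched.

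The main obstacle is precisely that last clause: orchestrating the orientation conventions so that the sign in the monodromy/deck identification and the sign in ``character dual to $[K]$'' (the paper's $-$ in $e^{-2\pi i\ell(x,-)}$) combine to give the minus sign as stated — in particular fixing the coorientation of $S$ from the orientations of $\C$ and $\Sigma$ so that $S\cdot\gamma=+w$. A secondary technical point is justifying the rational Seifert chain near $K$ when $[K]$ has order $>1$ in $D$: one must check that $\partial S$, pushed into $N(K)$, is homologous there to $d$ times the core, and this is the one place the hypothesis ``order \emph{exactly} $d$'' enters. Everything else is routine.
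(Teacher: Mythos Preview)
Your argument is correct and follows essentially the same route as the paper: lift a loop $\gamma$ representing $h$, track the change in $x$ along the lift via the winding of $z$ around $0$, and identify that winding with $d\,\ell([K],h)$ by building a $2$--chain with boundary $dK$ as the $z$--preimage of a ray (the paper uses the preimage of a regular value of $z/|z|$, which is the same surface). Your preliminary step isolating the character $\chi$ and your use of $\tfrac{1}{2\pi i}\oint d\log z$ are cosmetic differences; the honest caveat about sign conventions is exactly the point the paper handles by fixing the orientation of $\tilde\gamma$ from $hp$ to $p$.
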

\begin{proof}
  The action of $D=H_1(\Sigma)$ on $\ab\Sigma$ can be described as
  follows: If $h\in D$ and $p\in \ab\Sigma$ and $\gamma\colon[0,1]\to
  \Sigma$ is any loop based at $\pi(p)$ in $\Sigma$ whose homology
  class is $h$, then the lift $\widetilde \gamma$ of $\gamma$ that
  ends at $p$ starts $hp$.

  For a regular value $\lambda$ of the function $x/|x|$ on $\Sigma-K$
  consider the set $A:=(x/|x|)^{-1}(\lambda)\cup K$. This set can be
  considered as a smooth $2$--chain in $\Sigma$ with boundary $dK$, so
  $\ell(h,[K])=\frac1d\,\gamma\cdot A$. Denote the inverse image of
  $A$ in $\ab\Sigma$ by $\widetilde A$. So
  $$\widetilde A=\{p\in\ab\Sigma~|~d\arg(x(p))=\arg(\lambda)\}\,.$$
  The intersection number $A\cdot\gamma$ equals the algebraic number
  of intersections of $\widetilde A$ with $\widetilde \gamma$. But the
  function $x$ changes continuously along $\widetilde \gamma$, with
  value at the point $hp$ of $\widetilde \gamma$ some power of
  $e^{2\pi i/d}$ times its value at $p$. The power in question is
  clearly, up to sign, the intersection number $\widetilde
  A\cdot\widetilde \gamma$, since, as one moves along
  $\widetilde\gamma$ from $p$ to $hp$ the change in argument of
  $x(\widetilde\gamma(t))$ can be followed by counting how this
  argument passes through values of the form $(\arg(\lambda)+2\pi
  k)/d$. Since $\widetilde \gamma$ is oriented from $hp$ to $p$, the
  sign is as stated in the theorem.
\end{proof}
\subsection{Milnor number and $\delta$-invariant}

Let $(X,o)$ be a normal surface singularity with $\Q$HS link
$\Sigma$ and $z\colon(X,o)\to (\C,0)$ a holomorphic germ which
vanishes with degree $d$ on its zero set $B$.\comment{WDN deleted:
Assume that $B$ is irreducible, and that one cannot write
$z=(z')^k$ for some $k>1$.}  By Lemma \ref{le:lift}, a $d$--th
root of $z$ lifts to a well-defined function $x\colon (V,0)\to
(\C,0)$ on the \UAC{} $(V,0)$ of $(X,o)$, which vanishes to degree
$1$ on its zero-set $C\subset V$. We compute the
$\delta$-invariant of $(C,o)$; this is a topological computation.

The link of the pair $(V,C)$ is a fibered \comment{JMW--make sure we
  are consistently ''fiber" vs. ''fibre". WDN done} link whose ``Milnor fiber''
$F$ is diffeomorphic to $F=x^{-1}(\delta)\cap D^{2N}_\epsilon$ for
some sufficiently small $0<\delta<\!\!<\epsilon<\!\!<1$, where
$D^{2N}_\epsilon$ is the $\epsilon$--ball about the origin for some
embedding $(V,0)\to (\C^N,0)$.  A standard formula
\begin{equation}
  \label{eq:delta}
\delta(C)=\frac12(r-\chi(F))
 \end{equation}
relates the  $\delta$--invariant of $C$ with the number of branches $r$ and
the Euler characteristic of its smoothing $F$ (\cite{buchweitz-greuel}).
We thus want to compute $\chi(F)$.

Let $\Gamma$ be the resolution graph for a simultaneous good
resolution of $(X,B,o)$ and $v_1$ the vertex corresponding to the
exceptional curve $E_{v_1}$ that the proper transform of $C$ meets. So $B$
meets $E_{v_1}$ transversally in one point and meets no other exceptional
curve.
\begin{lemma}
  \label{le:chi of fiber}
  With the above notation, $\chi(F)=-\nu(\Gamma,v_1)$ where
  $$\nu(\Gamma,v):= \sum_{v}(\delta'_v-2)\ell_{v_1v}\,,$$
  the sum is over all vertices $v$ of $\Gamma$, and
  $\delta'_v=\delta_v$ if $v\ne v_1$ and $\delta'_v=\delta_v+1$ if
  $v=v_1$ (where $\delta_v$ is valency of $v$). In particular, if
  $v_1$ is a leaf then the summand for $v_1$ is zero and, by
  \eqref{eq:delta},
  $$\delta(C)=\frac12\bigl(r+\nu(\Gamma,v_1)\bigr)=
  \frac12\bigl(r+\sum_{v\ne v_1}(\delta_v-2)\ell_{v_1v}\bigr)\,.$$
\end{lemma}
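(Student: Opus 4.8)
The plan is to push the computation of $\chi(F)$ down to the base $X$ by a covering-space argument, and then to evaluate it on a resolution of the pair $(X,B)$ by an A'Campo-type formula.

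First I would compare $F$ with the Milnor fibre $F_X=z^{-1}(\delta)\cap D^{2N}_\epsilon$ of $z$ itself on $X$. Since $x$ lifts $z^{1/d}$ we have $x^d=z\circ p$, where $p\colon V\to X$ is the universal abelian cover; as $p$ is an unramified $|D|$--fold covering over $X-\{o\}\supset F_X$, the space $p^{-1}(F_X)$ is a $|D|$--fold cover of $F_X$, so $\chi(p^{-1}(F_X))=|D|\,\chi(F_X)$. On the other hand $p^{-1}(F_X)=\{x^d=\delta\}=\bigsqcup_{\zeta^d=\delta}\{x=\zeta\}$, a disjoint union of $d$ sets each of which, after intersecting with a suitable ball in $V$, is a Milnor fibre of $x\colon(V,o)\to(\C,0)$ and hence homeomorphic to $F$. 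Here it is essential that $x$, unlike $z$, vanishes to order exactly $1$ on its zero set $C$ and that $C$ is smooth away from $o$, so that the level sets $\{x=\zeta\}$, $\zeta\neq 0$ small, are smooth and the decomposition above is a decomposition into manifolds rather than into multiple copies. Counting Euler characteristics both ways gives $d\,\chi(F)=|D|\,\chi(F_X)$, i.e.\ $\chi(F)=\frac{|D|}{d}\chi(F_X)$.

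Next I would compute $\chi(F_X)$ on a good resolution $\tilde\pi\colon\tilde X\to X$ of the pair $(X,B)$ with dual graph $\Gamma$. Write the divisor of $z$ pulled back to $\tilde X$ as $(\tilde\pi^{*}z)=d\,\tilde B+\sum_v m_v E_v$, with $\tilde B$ the strict transform of $B$, appearing with multiplicity $d$. Because this is the divisor of a function it dots to $0$ with every $E_w$; since $\tilde B\cdot E_w=1$ for $w=v_1$ and $0$ otherwise, this reads $A(\Gamma)(m_v)_v=-d\,\mathbf e_{v_1}$, so $(m_v)_v=-d\,A(\Gamma)^{-1}\mathbf e_{v_1}=\frac{d}{|D|}(\ell_{v_1v})_v$ by the identity $(\ell_{vw})=-|D|A(\Gamma)^{-1}$ recalled just after Definition \ref{def:linking}. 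By the A'Campo-type formula for the Euler characteristic of a Milnor fibre in terms of a resolution (cf.\ Sect.~11 of \cite{eisenbud-neumann}, and \cite{buchweitz-greuel} for \eqref{eq:delta}) one has $\chi(F_X)=\sum_v m_v\,\chi(E_v^{\circ})$, where $E_v^{\circ}$ is $E_v$ with the points lying on the other components of the total transform removed (the strict-transform component contributes $d\cdot\chi(\tilde B^{\circ})=0$ since $\tilde B^{\circ}$ is a punctured disk). Now each $E_v$ is a rational curve meeting exactly $\delta'_v$ other components of the total transform — its $\delta_v$ exceptional neighbours, and in addition $\tilde B$ when $v=v_1$ — so $\chi(E_v^{\circ})=2-\delta'_v$. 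Hence $\chi(F_X)=\sum_v m_v(2-\delta'_v)$, and therefore $\chi(F)=\frac{|D|}{d}\sum_v m_v(2-\delta'_v)=-\sum_v \ell_{v_1v}(\delta'_v-2)=-\nu(\Gamma,v_1)$, which is the assertion. The last sentence of the lemma is then immediate: if $v_1$ is a leaf, $\delta'_{v_1}=\delta_{v_1}+1=2$, so the $v_1$--term drops out of $\nu(\Gamma,v_1)$, and the formula for $\delta(C)$ follows from \eqref{eq:delta}.

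I expect the one genuinely delicate point to be the first step: one must be sure that $p^{-1}(F_X)$ is $d$ \emph{disjoint} copies of $F$ and not fewer copies occurring with multiplicity — this is exactly where the order-$1$ vanishing of $x$ (versus the order-$d$ vanishing of $z$) enters, through the standard local Milnor-fibration package. An alternative route would resolve the pair $(V,C)$ directly and apply A'Campo there, at the cost of tracking how the universal abelian cover ramifies along the exceptional curves of $\tilde X$; the descent argument above seems shorter and keeps all the combinatorics on the single graph $\Gamma$.
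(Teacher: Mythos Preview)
Your argument is correct and matches the paper's: both compute the multiplicities $m_v=d\,\ell_{v_1v}/|D|$ from the intersection equations, apply A'Campo to obtain $\chi(F_X)=\sum_v m_v(2-\delta'_v)$, and then pass from $F_X$ to $F$ via the $|D|/d$--fold covering. The only cosmetic difference is that the paper phrases this last step as a commutative square of Milnor fibrations on the links (degrees $|D|$ on total spaces, $d$ on the base $S^1$) rather than your direct decomposition of $p^{-1}(F_X)$ into $d$ copies of $F$.
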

\begin{proof} If $dC+\sum a_vE_v$ is the zero set of $z$ on the
  resolution of $X$ then the intersection equations $(dC+\sum
  a_vE_v).E_w=0$ show that the order of vanishing $a_v$ of $z$ on a
  curve $E_v$ of the resolution is $d(\ell_{v_1v}/|D|)$.  It follows
  by a standard argument (originally due to A'Campo \cite{a'campo})
  that the Milnor fiber $F_z$ of $z$ has Euler characteristic
$$ \chi(F_z)=\frac d{|D|}\,\sum_v(2-\delta'_v) \ell_{v_1v}\,.$$
Let $X'$ and $z'$ be as in the proof of Lemma
  \ref{le:lift}. As there, we may assume $X'$ is connected. Then $d$
  is a divisor of $|D|$.

  \comment{JMW--check correctness of the last pair; also, we need to
    distinguish $B$ from the zero-set of $x$, since we have both $K$
    and $dK$. WDN ?? $B$
    and zero set of $x$ are in different spaces.}We can think of the
  Milnor fibers $F$ and $F_z$ as the fibers of the fibered
  (multi\hbox{-)}links $(\tilde\Sigma^{ab},L)$ and $(\Sigma, dK)$,
  which are the links of the pairs $(V,C)$ and $(X,B)$
  respectively. We have a diagram whose horizontal rows are Milnor
  fibrations and whose vertical arrows are covering maps:
$$\xymatrix{
F\ar[r]\ar[d]&\tilde\Sigma^{ab}-L\ar[r]\ar[d]&S^1\ar[d]\\
F_z\ar[r]&\Sigma-K\ar[r]&S^1}
$$
The degrees of the second and third vertical arrows are $|D|$ and $d$
respectively, so the first vertical arrow has degree $|D|/d$. Thus
$\chi(F)=|D|/d(\chi(F_z))$ and the lemma is proved.
\end{proof}

\subsection{Topological meaning of $\hat G$}

This subsection is a digression, without proofs, about the
topology underlying Section \ref{sec:curves from diagrams}
($D$-curves determined by rooted resolution diagrams). In that
section, after selecting a root leaf of the resolution diagram
$\Gamma$, the group $\hat G$ arose in terms of a $\C^*$--action
that is not easily seen to be part of the topological data. But
$\hat G$ has a very simple topological meaning.

If $\Sigma$ is the
link of our singularity and $K$ the knot corresponding to the root
leaf, then $\hat G = H_1(\Sigma_0)$, where $\Sigma_0$ is the knot
exterior (complement of an open solid torus neighborhood of $K$). The
homology class of a meridian curve $M$ of $K$ in $\partial\Sigma_0$
represents the element $\hat Q$ of Proposition \ref{prop:M}, while the
end-knots corresponding to the non-root leaves of $\Gamma$ represent
the elements $\overline\chi_j$ of Section \ref{sec:curves from
  diagrams}.

In particular, the value semigroup is the subsemigroup of
$H_1(\Sigma_0)$ generated by the classes of the end knots, and the
semigroup and congruence conditions together mean that the homology
class of the meridian of $K$ is a positive linear combination of the
homology classes of the end knots. 

Finally, in the induction of Section \ref{sec:curves from
diagrams}, the resolution sub-diagrams $\Gamma_i$ determine knot
exteriors $\Sigma_i$ which embed disjointly into $\Sigma_0$ (in an
obvious way) with complement $D_k\times S^1$, where $D_k$ is a
$k$--holed disk. The meridian curves $M_i$ for the $\Sigma_i$
match fibers of this $D_k\times S^1$, and Theorem \ref{th:ker}
describes a part of the homology exact sequence for the pair
$(\Sigma_0,\bigcup_{j=1}^k\Sigma_k)$.

\section{Corollaries and applications of the End Curve Theorem}
\label {sec:examples}

As corollaries of the End Curve Theorem, one can explain systematically
 why all previously known examples of splice quotients are in fact of this
 type.

\begin{corollary}[\cite{neumann83}]  The \UAC{} of a
weighted homogeneous singularity with $\Q HS$ link is a Brieskorn
complete intersection, and the covering group acts diagonally on the
 coordinates.
\end{corollary}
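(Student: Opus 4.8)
The plan is to verify the hypothesis of the End Curve Theorem and then read off the conclusion from the fact that a weighted homogeneous singularity has a star-shaped resolution graph.

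First I would recall the classical fact (see \cite{neumann83}) that a normal surface singularity $(X,o)$ with a good $\C^*$--action and $\Q$HS link has a \emph{star-shaped} minimal good resolution graph $\Gamma$: the base of the Seifert fibration of $\Sigma$ has underlying space $S^2$, and the $t$ arms of the star are in bijection both with the special orbits of the $\C^*$--action and with the leaves of $\Gamma$. If $t\le2$ the singularity is a cyclic quotient, $\Delta$ has no node, the \UAC{} is $\C^2$, and the statement is vacuous; so I assume $t\ge3$, so that $\Gamma$ has a single node $v$. Then I would construct the end-curve functions. For the $i$--th leaf let $\overline{O}_i\subset X$ be the closure of the corresponding special $\C^*$--orbit; it is $\C^*$--invariant, its proper transform in the resolution is a smooth curve meeting the exceptional divisor transversally along the $i$--th leaf curve, so $\overline{O}_i$ is an end curve and its link is the $i$--th end knot. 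Since $\Sigma$ is a $\Q$HS the local divisor class group of $X$ is finite, so for $d_i$ the order of $[\overline{O}_i]$ in it the Weil divisor $d_i\overline{O}_i$ equals $\operatorname{div}(z_i)$ for some $z_i\in\mathcal O_{X,o}$, and since this principal ideal is $\C^*$--homogeneous $z_i$ may be taken $\C^*$--homogeneous. Thus $z_i$ is an end-curve function for the $i$--th leaf, the End Curve Theorem applies, and $(X,o)$ is a splice quotient: a suitable $d_i$--th root $x_i$ of $z_i$ (Lemma \ref{le:lift}) is the $i$--th splice coordinate on the \UAC{} $(V,0)$, and by the splice-quotient construction (subsection \ref{subsec:action}) $D=D(\Gamma)$ acts diagonally on $\C^t\supset V$. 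This already gives the last clause of the corollary.

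It remains to identify $V$ as a Brieskorn complete intersection. Because $\Gamma$ is star-shaped, the $v$--weight of $Y_j$ is $\ell_{vv_j}=\ell_{vv}/\alpha_j$, where $\alpha_j\ge2$ is the determinant of the $j$--th arm, so the only admissible monomial on the $j$--th edge is $Y_j^{\alpha_j}$ and a system of splice equations has the form
\[
f_i=\sum_{j=1}^{t}a_{ij}Y_j^{\alpha_j}+H_i=0,\qquad i=1,\dots,t-2,
\]
with all maximal minors of $(a_{ij})$ nonzero and each $H_i$ a sum of monomials of $v$--weight $>\ell_{vv}$. The $\C^*$--action on $X$ lifts to $V$ (the \UAC{} is built by graded constructions, so $\mathcal O_{V,0}$ is again a graded domain), and each $x_i$, being a root of the homogeneous element $z_i$, is $\C^*$--homogeneous, say of weight $w_i>0$. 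Restricting $x_i$ and $x_j$ to a branch of a curve $\{x_k=0\}$ with $k\notin\{i,j\}$, exactly as in the proof of the End Curve Theorem, gives $w_i/w_j=\ell_{v_kv_i}/\ell_{v_kv_j}=\alpha_j/\alpha_i$; hence $\alpha_iw_i$ is independent of $i$, i.e.\ all the monomials $Y_j^{\alpha_j}$ have one and the same $\C^*$--weight, which is strictly smaller than the $\C^*$--weight of any monomial occurring in $H_i$. Since $V\hookrightarrow\C^t$ is equivariant for this diagonal $\C^*$--action with positive weights, the ideal of $V$ is $\C^*$--homogeneous, so the lowest $\C^*$--homogeneous component $g_i:=\sum_j a_{ij}Y_j^{\alpha_j}$ of $f_i$ lies in it; thus $V\subseteq V':=\{g_1=\dots=g_{t-2}=0\}$. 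The matrix condition together with $\alpha_j\ge2$ makes $V'$ a $2$--dimensional complete intersection with isolated singularity (this is exactly Theorem~2.6 of \cite{nw1}), hence normal and therefore irreducible; as $V$ is also $2$--dimensional, $V=V'$ is the asserted Brieskorn complete intersection. I expect the one genuinely nontrivial ingredient to be the construction of the homogeneous end-curve functions $z_i$ in the first step --- in essence the classical structure theory of weighted homogeneous singularities --- everything after the invocation of the End Curve Theorem being a mechanical specialization of the splice formalism to a one-node splice diagram, with the only subtlety the weight count $\alpha_iw_i=\mathrm{const}$ that forces the higher terms $H_i$ to drop out.
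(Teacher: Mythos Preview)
Your proof is correct and follows the same overall strategy as the paper: construct end-curve functions for the leaves, invoke the End Curve Theorem, observe that for a one-node splice diagram the admissible monomials are the pure powers $Y_j^{\alpha_j}$, and then use the $\C^*$--action to eliminate the higher-order terms $H_i$. The execution differs in two places. For the existence of end-curve functions, the paper quotients $X$ by the cyclic group $\mu_N$ (with $N$ a common multiple of the isotropy orders) to reduce to a cyclic quotient singularity, where every orbit is visibly cut out by a homogeneous function, and then pulls back; you instead appeal directly to the finiteness of the local divisor class group. Both are valid --- yours is more conceptual, the paper's more explicit. For the vanishing of the higher terms, the paper simply asserts ``because of quasihomogeneity, there can be no higher terms,'' while you spell out the argument via lifting the $\C^*$--action to $V$, checking the weight identity $\alpha_i w_i=\text{const}$, and taking initial forms to trap $V$ between itself and the Brieskorn locus $V'$. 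Your version makes explicit what the paper leaves to the reader; the one point to be careful about is that ``$x_i$ is a root of a homogeneous element, hence homogeneous'' uses that $\mathcal O_{V,0}$ is a graded \emph{domain} (compare lowest and highest graded pieces of $x_i^{d_i}$), which you might state.
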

\begin{proof} The minimal resolution graph $\Gamma$ has one node, and
  the $t$ leaves correspond to the $\C^*$-orbits with non-trivial
  isotropy.  We will show that for every leaf of $\Gamma$, there
  exists a weighted homogeneous end-curve function.  Via the End Curve
  Theorem, weighted roots of these functions generate the maximal
  ideal of the \UAC; also, the defining equations begin with sums of
  admissible monomials, which are powers of these root functions.
  Because of quasihomogeneity, there can be no higher terms in these
  equations, so that the splice equations give a Brieskorn complete
  intersection.

  Let $X$ be the affine variety with $\C^*$-action. The $\Q$HS
  condition is equivalent to the fact that $X-\{0\}/\C^*$ is a
  rational curve. The $\C^*$-action on $X-\{0\}$ has finitely many
  orbits with non-trivial isotropy, and the closures of these orbits
  are end-curves. We shall show that these orbits---in fact,
  every orbit---can be cut out by a weighted homogeneous function on
  $X$. Consider a common multiple $N$ of the orders of the isotropy
  groups and let $\mu_N$ be the cyclic subgroup of $\C^*$ of order
  $N$. Then every non-trivial $\C^*$ orbit of $X'=X/\mu_N$ is
  isomorphic to $\C^*/\mu_N$, so $X'$ is a cone over a rational curve,
  thus a cyclic quotient singularity. It is thus easy to see that any
  non-trivial $\C^*$--orbit of $X'$ is cut out by some weighted
  homogeneous function $f$, and composing $f\colon X'\to \C$ with the
  projection $X\to X'$ then gives the desired function on $X$.
  \Omit{OLD VERSION: Let $X=$ Spec $A$ be the affine
  variety with $\C^*$-action, and $Y\rightarrow X$ the Seifert partial
  resolution, which has cyclic quotient singularities at $p_1,
  \cdots,p_t$ along the irreducible exceptional curve.  Let $N$ be the
  least common multiple of the orders of the non-trivial isotropy
  groups, and divide $X$ and $Y$ by the cyclic subgroup $\mu_N$ of
  $\C^*$ of order $N$.  Each formerly singular orbit of the $\C^*$
  action is now equivariantly equivalent to
  $(\C^*/\mu_{p_i})/\mu_N=\C^*/\mu_N$, so the quotient $X'=X/\mu_N$
  and is a cone over a smooth rational curve and and $Y'=Y/\mu_N$ its
  minimal resolution.  Further, the action is free on $Y$ except on
  the closure of the $t$ $\C^*$-orbits.  The images of these loci on
  $Y'$ are the $t$ fibers of the line bundle $Y'\rightarrow \mathbb
  P^1$ over the relevant $t$ points.  Since $X'$ is itself a cyclic
  quotient singularity, it is easy to see that each such fiber $F_i$
  is the reduced proper transform of the zeroes of a function $f_i$ on
  $X'$.  The zeroes of $f_i$ on $Y$ is thus the closure of a
  $\C^*$-orbit.  Composing $f_i$ with the map $X\to X'$ we have a
  function on $X$ that vanishes precisely on the $i$--th singular
  $\C^*$ orbit. Since these singular fibers are end-curves for $X$, we
  have constructed the desired end-curve functions.}
\end{proof}

\begin{corollary}[Okuma \cite{okuma2}]Rational singularities, and
  minimally elliptic singularities with $\Q HS$ link, are splice
  quotients.  In particular, their \UAC's are complete intersections.
\end{corollary}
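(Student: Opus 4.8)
The plan is to argue as in the proof of the preceding corollary: by the End Curve Theorem it suffices to produce, for every leaf $v_i$ of the minimal good resolution graph $\Gamma$, an end curve function $z_i\colon(X,o)\to(\C,0)$; then $(X,o)$ is a splice quotient and its universal abelian cover is the complete intersection cut out by splice equations. Fix a leaf $v_i$ with exceptional curve $E_i$ and let $d_i$ be the order of $e_{v_i}$ in $D=H_1(\Sigma)$ (equivalently, the order of the homology class of the end knot $K_i$, cf.\ the proof of Lemma \ref{le:lift}). Since $-A(\Gamma)^{-1}=\tfrac{1}{|D|}(\ell_{vw})$ has nonnegative entries, the cycle $Z_i:=\sum_j\bigl(d_i\ell_{v_jv_i}/|D|\bigr)E_j$ is an effective exceptional divisor; it is integral exactly because $d_i$ is the order of $e_{v_i}$, and one checks $Z_i\cdot E_j=-d_i\delta_{ij}$. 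For any curvetta $C_i$ on $\tilde X$ (a smooth germ meeting $E$ transversally at a single general point of $E_i$), the divisor $Z_i+d_iC_i$ is numerically trivial, so $\mathcal O_{\tilde X}(-Z_i-d_iC_i)$ defines a class in $\operatorname{Pic}^0(\tilde X)$. As the link is a $\Q$HS, $\tilde X$ is simply connected, so the exponential sequence gives $\operatorname{Pic}^0(\tilde X)\cong H^1(\tilde X,\mathcal O_{\tilde X})\cong\C^{p_g}$.

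For a rational singularity $p_g=0$, so this class vanishes and $\mathcal O_{\tilde X}(-Z_i-d_iC_i)$ is trivial; applying a trivialization to the canonical section of $\mathcal O_{\tilde X}(Z_i+d_iC_i)$ gives a holomorphic function $z_i$ on $\tilde X$, hence on $X$, with $\operatorname{div}(z_i)=Z_i+d_iC_i$. The only component of this zero set reaching $\Sigma$ is $C_i$, which cuts out the meridian knot $K_i$ of the leaf, so $z_i$ is an end curve function. Carrying this out for every leaf and invoking the End Curve Theorem proves the statement, and the complete intersection property of the universal abelian cover, for rational singularities.

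For minimally elliptic singularities $p_g=1$, so $\operatorname{Pic}^0(\tilde X)\cong\C$ is nonzero, and the class $[\mathcal O_{\tilde X}(-Z_i-d_iC_i)]$ need not vanish for formal reasons; making it vanish is the main obstacle, and it is here that minimal ellipticity enters essentially (note one cannot naively enlarge $Z_i$, since that would disperse the horizontal part of the divisor away from $E_i$). The approach is to show nonetheless that $H^0(\tilde X,\mathcal O_{\tilde X}(-Z_i))$ contains a section vanishing to order exactly $d_i$ along a single curvetta at $E_i$, equivalently that the obstruction class is zero. For this one invokes Laufer's and Yau's structure theory of minimally elliptic singularities: they are Gorenstein, and the elliptic sequence yields vanishing of $H^1\bigl(\tilde X,\mathcal O_{\tilde X}(-D)\bigr)$ for effective cycles $D$ that are large enough relative to the minimally elliptic cycle; combined with the fact that every class in $\operatorname{Pic}^0(\tilde X)$ restricts trivially to each exceptional curve $E_j\cong\mathbb P^1$, one deduces that the required section exists. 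Alternatively, one may simply cite Okuma's explicit construction of end curve functions for rational and minimally elliptic singularities \cite{okuma2}. Once end curve functions have been produced for all leaves of $\Gamma$, the End Curve Theorem completes the proof, realizing $(X,o)$ as a splice quotient whose universal abelian cover is a complete intersection.
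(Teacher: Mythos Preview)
Your overall strategy---produce end curve functions for each leaf and invoke the End Curve Theorem---is exactly the paper's approach. Your treatment of the rational case via $p_g=0$ and triviality of $\operatorname{Pic}^0(\tilde X)$ is correct and in fact more explicit than the paper, which simply says that ``standard results on rational singularities easily produce end-curve functions.''

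The minimally elliptic case, however, is where your argument is incomplete. Your sketch invoking Laufer--Yau structure theory and vanishing of $H^1\bigl(\tilde X,\mathcal O_{\tilde X}(-D)\bigr)$ for ``large enough'' $D$ does not, as stated, produce the required section: you have not explained why $Z_i$ is large enough, nor how the $H^1$--vanishing yields a section of $\mathcal O_{\tilde X}(-Z_i)$ whose zero divisor has proper part a single curvetta on $E_i$ (as opposed to something spread over several exceptional curves). The paper does not attempt this either; instead it cites a specific lemma of M.~Reid (\cite{reid}, p.~122) which supplies exactly this existence statement for minimally elliptic singularities. Your fallback of citing Okuma \cite{okuma2} is awkward here: the corollary itself is attributed to Okuma, and the paper explicitly remarks that Okuma's proof proceeds by a different route (showing directly that root functions generate the maximal ideal of the \UAC, under a stronger combinatorial hypothesis on $\Gamma$), so it is not clear that his paper isolates the end-curve-function construction in citable form. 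Replace your sketch with the Reid citation and the argument is complete.
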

\begin{proof} It was explained in Theorem 13.2 of
  \cite{nw1}why the End Curve Theorem would imply this result.
  Specifically, standard results on rational singularities
  easily produce end-curve functions (or more generally, functions
  satisfying any topologically allowed vanishing).  The existence of
  such functions in the minimally elliptic case is slightly harder,
  but is proved by M. Reid in \cite{reid}, Lemma, p. 122.
\end{proof}

We remark that Okuma's proof is different from ours, with a key step
the argument that the root functions generate the maximal ideal of the
\UAC.  It uses a strong condition satisfied by the graphs of rational
and minimally elliptic singularities, but not by splice-quotients in
general.  In \cite{neumann-wahl03}, the first non-trivial case of this
theorem was proved, showing that the \UAC{} of a quotient-cusp (the
simplest rational singularity whose graph has two nodes) is a complete
intersection cusp singularity.

\begin{corollary} [\cite{nw2}, Theorem 4.1] Let $(X,o)$ be a normal
  surface singularity with integral homology sphere link, for which
  all the knots associated to leaves are links of hypersurface
  sections.  Then the semigroup condition is fulfilled, and $X$ is a
  complete intersection of splice type.
\end{corollary}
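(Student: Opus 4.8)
This is the special case of the End Curve Theorem in which the link $\Sigma$ is an integral homology sphere, so that $D=H_1(\Sigma;\Z)$ is trivial. The plan is simply to check that the hypotheses of the End Curve Theorem are met and then to read off the sharper conclusion that the trivial-$D$ case permits. First I would unwind the hypothesis: to say that the end knot $K_i\subset\Sigma$ associated to the $i$th leaf of $\Gamma$ is the link of a hypersurface section means exactly that there is an analytic germ $z_i\colon(X,o)\to(\C,0)$ whose reduced zero set cuts out $K_i$, i.e.\ $z_i$ is an \emph{end-curve function} for the $i$th leaf (with some degree of vanishing $d_i\ge 1$). Hence the hypothesis of the End Curve Theorem holds for every leaf of $\Gamma$.

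Next, since $D=\{1\}$, the universal abelian cover $(V,o)$ coincides with $(X,o)$ itself. By Lemma~\ref{le:lift}, a $d_i$th root $x_i$ of $z_i$ is then already a single-valued function on $X$, vanishing to order $1$ on its zero set, which (again because $D$ is trivial, so $r_i=|D|/d_i'=1$) is irreducible and reduced. Moreover the congruence condition on $\Gamma$ is vacuous when $D$ is trivial, so here ``splice quotient'' degenerates to ``the complete intersection cut out in $\C^t$ by a system of splice equations.''

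Applying the End Curve Theorem now yields that $(X,o)$ is a splice quotient, hence — $D$ being trivial — that $X$ is precisely the complete intersection in $\C^t$ defined by a system of splice-type equations in the coordinates $x_1,\dots,x_t$. The semigroup condition is not a separate input: it is produced during the proof of the End Curve Theorem (Section~\ref{sec:proof}), where Theorem~\ref{th:maj}, applied with each leaf $v_i$ in turn as the root, forces equality $2\delta(C_i)-r_i=\nu(\Gamma,v_i)$ and thereby the semigroup (and, trivially here, congruence) conditions at every node pointing away from $v_i$; running over all leaves gives all the semigroup conditions. This is exactly the assertion of the corollary.

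Since the statement is a direct corollary of the main theorem, there is no genuine obstacle once the End Curve Theorem is available; the only points requiring a word of care are the translation of ``link of a hypersurface section'' into ``end-curve function'' and the remark that over a $\Z$HS one need not pass to any cover at all — and both of these are immediate, the latter from Lemma~\ref{le:lift}.
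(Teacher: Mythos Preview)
Your proposal is correct and matches the paper's treatment: the paper gives no separate proof for this corollary, listing it among immediate consequences of the End Curve Theorem, and your argument simply spells out why the $\Z$HS case (trivial $D$, so $V=X$ and the congruence condition is vacuous) yields the stated conclusion directly. Your unpacking of ``link of a hypersurface section'' as ``end-curve function'' and your remark that the semigroup condition is produced inside the proof via Theorem~\ref{th:maj} are exactly the points one needs.
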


``Equisingular" deformations of a splice quotient singularity whose
link is an \emph{integral} homology sphere should remain splice
quotients; this is definitely true for positive weight deformations of
weighted homogeneous singularities with $\Z HS$ links.  On the other
hand, it is not true even for fairly simple splice quotients
(cf. \cite{nemethi et al}), and this becomes clear via the End Curve
Theorem.  The following example comes from E. Sell's Ph.D. thesis.

\begin{example}[\cite{sell}, 3.1.4]\label{ex:sell} The weighted homogeneous
  singularity $X$ defined by $z^2=x^4+y^9$ has resolution dual graph
  and (reduced) splice diagram
    $$
\xymatrix@R=8pt@C=30pt@M=0pt@W=0pt@H=0pt{
&&\righttag{\bullet}{-2}{6pt}\lineto[d]&&&&\Circ\\
&&\lineto[u]&\\
\undertag{\bullet}{-2}{4pt}\lineto[r]&\undertag{\bullet}{-5}{4pt}\lineto[r]&\undertag{\bullet}{-1}{4pt}\lineto[r]\lineto[u]&\undertag{\bullet}{-5}{4pt}\lineto[r]&\undertag{\bullet}{-2}{4pt}\\
&&&&&\Circ\lineto[r]_(.7)9&\Circ\lineto[r]_(.3)9\lineto[uuu]_(.3)2&\Circ\\\\
&&\\
}
$$
\comment{JMW--insert pictures, WDN Done}Rewriting the equation as $(z-x^2)(z+x^2)=y^9$, one
sees that the functions $z\pm x^2$ vanish 9 times along their
zero-sets; they, together with $x$ (whose zero-set is reduced) are
end-curve functions.  The discriminant group is cyclic of order 9.
So, one can form the \UAC{} by adjoining a $U$ satisfying $U^9=z-x^2$,
and a $V$ satisfying $V^9=z+x^2$, along with $x$.  Then $(UV)^9=y^9$;
but since the \UAC{} is a normal domain, one must have (perhaps changing
$V$ by a $9$--th root of 1) that $y=UV$.  Thus, the \UAC{} is the
hypersurface $V^9=U^9+2x^2$, with discriminant group
action $$(U,V,x)\mapsto (\zeta U,\zeta ^{-1}V,x),$$ where $\zeta$ is a
primitive $9$--th root of 1.  The versal deformation of weight $\geq 0$ of
the \UAC{} that is equivariant with respect to the group action is smooth
of dimension 3, and defined by
$$V^9=U^9+2x^2+t_1(UV)^5+t_2(UV)^6+t_3(UV)^7.$$
Taking invariants to obtain the ``versal splice quotient deformation"
of $X$, and changing coordinates, gives the family
$$y^9=(z-x^2-\frac{1}{2}(t_1y^5+t_2y^6+t_3y^7))(z+x^2+\frac{1}{2}(t_1y^5+t_2y^6+t_3y^7)),$$
which can be written (to show the deformation of the curve $x^4+y^9=0$)
$$z^2=x^4+y^9+t_1x^2y^5+t_2x^2y^6+t_3x^2y^7+\text{quadratic terms in $t_i$'s}.$$
One important point is that the first equation for the versal splice
quotient deformation shows explicitly how to lift the end-curve
functions $z\pm x^2$ under deformation so that \emph{they remain
  end-curve functions}, i.e., continue to vanish to order 9 along
their zero-sets.  A second point is that the positive weight
deformation $z^2=x^4+y^9+txy^7$ is \emph{not} a splice-quotient
deformation, even to first order.
\end{example}

\begin{remark} More generally, in her thesis \cite{sell} E. Sell
  considers singularities $z^n=f(x,y)$ with $\Q HS$ link and $f$
  analytically irreducible.  The splice and semigroup conditions
  depend only on $n$ and the topological type (i.e., Puiseux pairs) of
  $f$, and it turns out that very rarely are they satisfied.  However,
  in all cases where they are satisfied, there exist special $f$ so
  that the singularity is a splice quotient; and in these cases the
  defining equation can be rewritten (as in the example above) to
  highlight the end curve functions.  As above, only deformations
  which are both equisingular and preserve the order of vanishing of
  these functions give splice quotient deformations.  \end{remark}


\begin{thebibliography}{99}

\bibitem{a'campo} Norbert A'Campo, 
La fonction z\^eta d'une monodromie, 
Comment. Math. Helv. {\bf50} (1975), 233--248. 

\bibitem{braun-nemethi}\comment{WDN removed unreferenced items}
Gabor Braun and Andras N\'emethi,
Surgery formula for Seiberg--Witten invariants of negative definite
plumbed 3-manifolds, arXiv:0704.3145.

%
\bibitem{buchweitz-greuel} R.-O. Buchweitz and  G.-M. Greuel,
The Milnor number and deformations of complex curve singularities.
Invent. Math. {\bf58} (1980), 241--281.

%
%
\bibitem{delorme} Charles Delorme, Sous-mono\"\i des d'intersection
  compl\`ete de $\N$, Ann. Sci. \'Ecole Norm. Sup. (4) {\bf9} (1976),
  145--154.

\bibitem{de smit} B. de Smit and H.W. Lenstra, Jr, Finite complete
  intersection algebras and the completeness radical.  J. Algebra
  {\bf196} (1997), 520--531.

\bibitem{durfee} Alan H Durfee, Fibered knots and algebraic
  singularities.  Topology {\bf13} (1974), 47--59.

\bibitem{eisenbud-neumann} D. Eisenbud and W.D. Neumann,
  \emph{Three-dimensional link theory and invariants of plane curve
singularities.}  Ann. Math. Stud. {\bf 110}, Princeton.  Princeton
  Univ. Press (1985).

%

\bibitem{herzog} J. Herzog, Generators and relations of abelian
  semigroups and semigroup rings, Manuscripta Math. {\bf3} (1970),
  175--193.

\bibitem{herzog-kunz} J. Herzog and E. Kunz, {Die Wertehalbgruppe eines
  lokalen Rings der Dimension $1$}, Sitzungsberichte der Heidelberger
  Akademie der Wissenschaften, 2. Abh. (1971, Springer Verlag), 27--67.

%
%
%
%

\bibitem{gh}
G.-M. Greuel and H. Hamm, Invarianten quasihomogener vollst\"andiger Durchschnitte, Invent. Math. {\bf 49} (1978), 67-86.





\bibitem{nemethi-nicolaescu} Andr\'as N\'emethi and Liviu E. Nicolaescu,
  Seiberg--Witten invariants and surface singularities, Geometry and
  Topology {\bf 6} (2002), 269--328.

\bibitem{nemethi et al} I. Luengo-Velasco, A. Melle-Hern\'andez, A.
  N\'emethi, Links and analytic invariants of superisolated
  singularities. J. Algebraic Geom. {\bf 14} (2005), 543--566.

\bibitem{nemethi-okuma1}
 Andras N\'emethi and Tomohiro Okuma, On the Casson Invariant
 Conjecture of Neumann--Wahl, arXiv:math/0610465

\bibitem{nemethi-okuma2} 
\bysame, The Seiberg-Witten Invariant
 Conjecture for splice-quotients, preprint

%
%
%
\bibitem{neumann81} W.D. Neumann,  
A calculus for plumbing applied to the topology of complex
  surface singularities and degenerating complex curves. Trans. Amer.
  Math. Soc. {\bf 268} (1981), 299--343.

\bibitem{neumann83} 
\bysame,
Abelian covers of quasihomogeneous
  surface singularities, {\it Singularities, Arcata 1981}, Proc. Symp.
  Pure Math. {\bf 40} (Amer. Math. Soc. 1983), 233--243.


\bibitem{neumann-wahl90} W.D. Neumann and J. Wahl, Casson invariant of
  links of singularities. Comment. Math. Helv. {\bf 65} (1990),
  58--78.


\bibitem{neumann-wahl02} 
\bysame,
Universal abelian covers of surface singularities, {\it
Trends on Singularities}, A. Libgober and M. Tibar, eds.
  (Birkh\"auser Verlag, 2002), 181--190.

\bibitem{neumann-wahl03} 
  \bysame, Universal abelian covers of quotient-cusps.
  Math. Ann. {\bf 326} (2003), 75--93.

\bibitem{nw1} \bysame, Complete intersection singularities of splice
  type as universal abelian covers. Geom. Topol. {\bf 9} (2005),
  699-755.

\bibitem{nw2} 
  \bysame, Complex surface singularities with integral homology sphere
  links. Geom. Topol. {\bf 9} (2005), 757-811.

\bibitem{okuma1} T. Okuma, Universal abelian covers of rational
 surface singularities. J. London Math. Soc. (2) {\bf 70} (2004), 307-324.

\bibitem{okuma2} 
\bysame, Universal abelian covers of certain
 surface singularities.  Math. Ann. {\bf 334} (2006), 753-773.

\bibitem{okuma3} 
\bysame, The geometric genus of splice-quotient
  singularities, arXiv:math/0610464


\bibitem{pedersen} H.M. Pedersen, Dissertation, Columbia University,
  in preparation.


\bibitem{reid} M. Reid, Chapters on Algebraic Surfaces, \emph{Complex
algebraic geometry (Park City UT, 1993)}, IAS/Park City Math. Ser.
3, Amer. Math. Soc. (Providence, RI, 1997), 3-159.

\bibitem{sell} E. Sell, Universal abelian covers for surface
  singularities $\{z^n=f(x,y)\}$, UNC PhD thesis (2007).

%
%



  \bibitem{watanabe} K.-i.
  Watanabe, Some examples of one dimensional Gorenstein domains,
  Nagoya Math. J. {\bf 49} (1973), 101-109.


\end{thebibliography}
\end{document}